\newtheorem{thm}{Theorem}[section]
\newtheorem{lem}[thm]{Lemma}
\newtheorem{prop}[thm]{Proposition}
\newtheorem{cor}[thm]{Corollary}
\newtheorem{de}[thm]{Definition}
\newtheorem{rem}[thm]{Remark}
\def \N {\mathbb N}
\numberwithin{equation}{section}
\begin{document}
		\author{Wen Huang, Maoru Tan and Leiye Xu} \address {School of Mathematical Sciences, University of Science and Technology of China, Hefei, Anhui, 230026, PR China}
	\email{wenh@mail.ustc.edu.cn}
	\email{lwtanmr@mail.ustc.edu.cn}
    \email{leoasa@mail.ustc.edu.cn}
	
	\title[Logarithmic Sarnak Conjecture]{Almost Countable Spectrum and Logarithmic Sarnak Conjecture}
	
	\thanks{}
	\subjclass[2020]{Primary: 37A35, 11K31}
	
	\keywords{M\"{o}bius function; Logarithmic Sarnak Conjecture; Spectrum}
	
	\begin{abstract} In this paper, we introduce topological dynamical systems with almost countable spectrum. We prove that the Logarithmic Sarnak Conjecture holds for zero-entropy topological dynamical systems whose spectrum is almost countable. This class includes Anzai skew product on $\mathbb{T}^2$ over a rotation of $\mathbb{T}^1$, time-one maps of continuous suspension flows over rotations, systems with finite maximal pattern entropy, and bounded tame systems.
	\end{abstract}
	\maketitle

\section{Introduction}\label{sec-intro}
In this paper, a {\it topological dynamical system} (TDS for short) is a pair $(X, T)$, where
$X$ is a compact metric space endowed with a metric $d$ and $T: X \to X$ is a homeomorphism.
We say a sequence $\xi$ is {\em realized} in $(X,T)$ if there is an $f\in C(X)$ and an $x\in X$
such that $\xi(n) = f(T^nx)$ for any $n\in\mathbb{N}$. A sequence $\xi$ is called {\em deterministic} if it is realized in a TDS
with zero topological entropy.

Let $\bm{\lambda}: \mathbb{Z}\to\{-1,1\}$ be the Liouville function, defined to be $1$ on positive integers with an even number of prime factors (counted with multiplicity) and $-1$ elsewhere. We extend $\bm{\lambda}$ to the integers arbitrarily, for example by setting $\bm{\lambda}(-n)=\bm{\lambda}(n)$ for negative $n\in\mathbb{Z}$ and $\lambda(0)=0$. The M\"{o}bius function $\bm{\mu}$ equals $\bm{\lambda}$ on square-free integers and is $0$ otherwise.
%
Here is the well-known conjecture by Sarnak \cite{Sar}:

\noindent {\bf Sarnak conjecture:}\ {\em
The M\"{o}bius function $\mu$ is linearly asymptotically disjoint from any deterministic sequence $\xi$. That is,
\begin{equation*}
\lim_{N\rightarrow \infty}\frac{1}{N}\sum_{n=1}^N\xi(n)\bm{\mu}(n)=0.
\end{equation*}
}

This is a fundamental yet deeply challenging problem, and partial results for this conjecture have been established for a wide range of dynamical systems (see \cite{DLMR,FKL2018,GT,HWY,KLR,FKL2019,LS15,W17} and references therein).

In 2017, Tao introduced and investigated the following logarithmic version of the Sarnak conjecture \cite{Tao,Tao1} (see also \cite{FHost,HXY,M,TV,TV1}).

\medskip
\noindent {\bf Logarithmic Sarnak conjecture:}\ {\em
For any topological dynamical system $(X,T)$ with zero entropy, any continuous function $f:X\to\mathbb{C}$ and any point $x$ in $X$,
\begin{equation}\label{log-Sarnak}
\lim_{N\rightarrow \infty}\frac{1}{\log N}\sum_{n=1}^N\frac{f(T^nx)\bm{\mu}(n)}{n}=0.
\end{equation}
}

For an overview of recent progress on the logarithmic Sarnak conjecture we refer to the surveys \cite{DLMR,FKL2018,FKL2019}.
In 2018, Frantzikinakis and Host established that the conjecture holds for topological dynamical systems of zero entropy that admit only countably many ergodic measures \cite[Theorem~1.1]{FHost}.

The main theorem of the present paper asserts that the logarithmic Sarnak conjecture holds for topological dynamical systems of zero entropy whose spectrum is \emph{almost countable}.
To make this precise, we first introduce the notion of almost countable spectrum.

Let $(X,T)$ be a TDS and $\mathcal{X}$ be the Borel $\sigma$-algebra of $X$.
We write $\mathcal{M}(X,T)$ for the set of all $T$-invariant Borel probability measures and $\mathcal{M}^{e}(X,T)$ for the subset of ergodic measures.
Fix $\mu\in\mathcal{M}(X,T)$.
A complex number $\lambda$ is called an \emph{eigenvalue} of $(X,\mathcal{X},T,\mu)$ if there exists a non-zero function $f\in L^{2}(X,\mathcal{X},\mu)$ such that $U_{T}f=\lambda f$, where $U_Tf:=f\circ T$ is the Koopman operator; such an $f$ is called an \emph{eigenfunction} associated with $\lambda$.
Since $U_{T}$ is unitary, every eigenvalue satisfies $|\lambda|=1$, hence $\lambda\in\mathbb{T}:=\mathbb{R}/\mathbb{Z}$.
The \emph{spectrum} of $(X,\mathcal{X},T,\mu)$ is defined to be
\[
\operatorname{Spec}(X,\mathcal{X},\mu,T):=\{\lambda\in\mathbb{T}:\lambda\text{ is an eigenvalue of }(X,\mathcal{X},T,\mu)\}.
\]
Because the Hilbert space \(L^{2}(X,\mathcal{X},\mu)\) is separable and eigenfunctions corresponding to distinct eigenvalues are orthogonal, the set \(\operatorname{Spec}(X,\mathcal{X},\mu,T)\) is countable.

We say that an invariant measure $\mu\in\mathcal{M}(X,T)$ has \emph{almost countable spectrum} if there exists a countable subset $C_{\mu}\subset\mathbb{T}$ such that, for the ergodic decomposition $\mu=\int_{\mathcal{M}^{e}(X,T)}m\,d\tau(m)$ of $\mu$, one has
\[
\operatorname{Spec}(X,\mathcal{X},m,T)\subset C_{\mu}\quad\text{for }\tau\text{-a.e.\ }m\in\mathcal{M}^{e}(X,T).
\]
We say that a TDS $(X,T)$ has \emph{almost countable spectrum} if every invariant measure $\mu\in \mathcal{M}(X,T)$ possesses almost countable spectrum in the sense above. It is clear that a TDS with countably many ergodic measures has almost countable spectrum. Hence the following is an extension of Frantzikinakis and Host's result \cite[Theorem 1.1]{FHost}.

\begin{thm}\label{thm-A}
The logarithmic Sarnak conjecture holds for any TDS with zero entropy and almost countable spectrum.
\end{thm}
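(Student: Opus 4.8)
The plan is to reduce Theorem~\ref{thm-A} to the Frantzikinakis--Host criterion, which says that the logarithmic Sarnak conjecture holds provided that every weak-$*$ limit of the logarithmically averaged empirical measures has ``enough'' structure — concretely, it suffices to prove that for any $f\in C(X)$ the logarithmic averages $\frac{1}{\log N}\sum_{n\le N}\frac{f(T^nx)}{n}U^n$ can be understood through the ergodic decomposition of the (Furstenberg) system carried by $\bm\mu$, and that on each ergodic component the relevant correlations vanish. More precisely, I would invoke the structural input from \cite{FHost} (and Tao's identity relating $\bm\mu$-averages to $\bm\mu$-self-correlations, \cite{Tao,Tao1}): it is enough to show that every ergodic measure $m$ arising in the decomposition of an invariant measure of $(X,T)$ is \emph{disjoint} (in the sense of Furstenberg joinings) from the Furstenberg system of $\bm\mu$, or, in the weaker form actually needed, that the function $f$ has zero logarithmic correlation with $\bm\mu$ along such a component. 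The point of the hypothesis is that, after the ergodic decomposition $\mu=\int m\,d\tau(m)$, each component $m$ has its eigenvalue spectrum confined to one fixed countable set $C_\mu\subset\mathbb T$.

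The key steps, in order, are as follows. \textbf{Step 1.} Fix $f\in C(X)$ and $x\in X$; by passing to a subsequence choose a weak-$*$ limit $\mu$ of $\frac{1}{\log N_k}\sum_{n\le N_k}\frac{\delta_{T^nx}}{n}$, which is $T$-invariant, and use the hypothesis that $(X,T)$ has almost countable spectrum to fix the countable set $C_\mu$ and the ergodic decomposition $\mu=\int m\,d\tau(m)$ with $\operatorname{Spec}(X,\mathcal X,m,T)\subset C_\mu$ for $\tau$-a.e.\ $m$. \textbf{Step 2.} Decompose $L^2(m)$ spectrally: write $f=f_{\mathrm{Kr}}+f^\perp$, the orthogonal projection onto the Kronecker factor (the closed span of eigenfunctions) plus the part orthogonal to all eigenfunctions; the rational-Kronecker/Wiener--Wintner machinery together with Frantzikinakis--Host shows that the $f^\perp$-part contributes nothing to the logarithmic $\bm\mu$-average (this is the ``no eigenvalue'' case of \cite{FHost}, which already covers weakly mixing components). \textbf{Step 3.} For the Kronecker part, use that $C_\mu$ is countable: the Kronecker factor of $m$ embeds into a rotation on a compact abelian group whose dual is a subgroup of the countable group generated by $C_\mu$, hence is a rotation on a fixed metrizable compact abelian group $G$ independent of $m$ (one can take $G=\widehat{\langle C_\mu\rangle}$). \textbf{Step 4.} Glue the fibered Kronecker factors: this identifies a \emph{common} factor map from $(X,\mathcal X,\mu,T)$ (or from a suitable invariant extension) onto a rotation on $G$, so that the ``eigenvalue part'' of $f$, integrated over all components, is measurable with respect to a single rotation factor. \textbf{Step 5.} Apply the logarithmic Sarnak conjecture for rotations — equivalently, the fact that $\frac{1}{\log N}\sum_{n\le N}\frac{e(n\alpha)\bm\mu(n)}{n}\to0$ for every $\alpha\in\mathbb T$ (a classical consequence of the prime number theorem in arithmetic progressions / Davenport's estimate, which holds even in the Cesàro sense) — to kill the Kronecker contribution. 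Combining Steps 2 and 5 gives \eqref{log-Sarnak} along the subsequence $N_k$, and since the subsequence was arbitrary, the full limit is $0$.

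The main obstacle is Step 4: assembling the pointwise-in-$m$ Kronecker factors into one genuine measure-theoretic factor of $(X,\mathcal X,\mu,T)$. The difficulty is that eigenfunctions are only defined up to $m$-null sets and up to unimodular constants, so one must choose them \emph{measurably} in the parameter $m$ — this is a measurable-selection problem for eigenfunctions over the ergodic decomposition. I would handle it by working with the countable set $C_\mu=\{\lambda_j\}$: for each $j$ and each component $m$ with $\lambda_j\in\operatorname{Spec}(X,\mathcal X,m,T)$, the eigenspace is spanned by a single unimodular function (ergodicity), and one can select a representative $g_j(m,\cdot)\in L^\infty$ depending measurably on $m$ by a standard von Neumann / Jankov--von Neumann selection argument applied to the Koopman operator acting on the measurable field of Hilbert spaces $\{L^2(m)\}$. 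Then $g_j$ defines an eigenfunction of $(X,\mathcal X,\mu,T)$ globally (after normalization), and the closed algebra generated by $\{g_j\}$ is the desired rotation factor on $G$. A cleaner alternative, avoiding the disintegration entirely, is to observe that it suffices to prove the vanishing of $\limsup_{N}\frac1{\log N}\big|\sum_{n\le N}\frac{\langle U^nf,f\rangle_{L^2(\mu)}\,\bm\mu(n)}{n}\big|$ — i.e.\ work on the Furstenberg system of the pair $(f,\bm\mu)$ directly — and here the spectral measure of $f$ with respect to $U_T$ on $L^2(\mu)$ is, by the hypothesis and the disintegration of spectral measures, supported on $C_\mu$ plus a continuous part, reducing everything to Step 2 plus the scalar estimate of Step 5 without ever building a topological factor. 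I expect the second route to be the one that goes through most smoothly, with the measurable-selection argument kept in reserve in case the spectral-measure disintegration needs the genuine factor structure.
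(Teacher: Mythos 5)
Your overall two–part shape --- an eigenfunction part killed by exponential–sum estimates for $\bm{\mu}$, plus a remainder killed by disjointness from the Furstenberg systems of $\bm{\mu}$ --- matches the paper, but there are two genuine gaps, and both stem from splitting $f$ at the \emph{full} Kronecker factor and then trying to glue the component Kronecker factors. The Kronecker factor of the (generally non-ergodic) limit measure $\mu$ is \emph{not} the integral of the Kronecker factors of its ergodic components, so your $f^{\perp}$, orthogonal to $\mathcal{K}(\mu)$, will in general still project nontrivially onto eigenfunctions of individual components $m$; the ``no eigenvalue case of \cite{FHost}'' therefore does not apply to it componentwise. The paper avoids this by splitting at the \emph{rational} Kronecker factor, which does pass to ergodic components (Lemma~\ref{lem-de-rat}, via the fixed-point algebras of $T^{p!}$), and then lets the disjointness result (Theorem~\ref{thm-zero-joining}) absorb \emph{all} irrational eigenfunctions of the components. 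That disjointness is precisely where the almost-countable-spectrum hypothesis is used: the system of arithmetic progressions with prime steps has no irrational spectrum as a whole, but its ergodic components may; one needs Rudolph's theorem to see that for each fixed irrational $\lambda$ almost every ergodic component of a joining avoids $\lambda$, and then the countability of $C_\mu$ to upgrade this to ``a.e.\ component has irrational spectrum disjoint from that of the corresponding target component'' (Proposition~\ref{prop-dis-1}). Your proposal instead spends the hypothesis on building a common rotation factor $G=\widehat{\langle C_\mu\rangle}$; the measurable-selection problem you flag is real, but the deeper issue is that even a successful gluing leaves the orthogonal complement untreated, since the disjointness statement it requires is the technical heart of the argument and is nowhere supplied.

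The second gap is Step 5. Even granting a measure-theoretic rotation factor, $f_{\mathrm{Kr}}$ is an $L^2(\mu)$-function defined only $\mu$-a.e., while the conclusion concerns a fixed point $x$ and a continuous function evaluated along its topological orbit. ``Logarithmic Sarnak for rotations / Davenport'' does not bridge this: the eigenfunction relation $g(T^n x)=e(nt)\,g(x)$ cannot be propagated from the single point $x$ along the whole orbit, because $g$ is neither continuous nor defined at $x$. The paper's Lemmas~\ref{c-1} and~\ref{c-2} handle exactly this, via a Lusin approximation on a compact set $K$, a short-window decomposition of length $L$ anchored at the returns of the orbit to a neighbourhood of $K$, and the Matom\"{a}ki--Radziwi\l{}\l{}--Tao averaged Chowla theorem \cite{MRT} with its uniformity in the frequency $\theta$; Davenport-type estimates alone are not enough. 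Finally, your ``cleaner alternative'' via $\langle U^n f, f\rangle_{L^2(\mu)}$ is off target: the quantity to be controlled is the correlation of $f(T^nx)$ with $\bm{\mu}(n)$, not a self-correlation of $f$, and the spectral measure of $f$ does not control it.
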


In what follows, we present several illustrative examples as applications of Theorem \ref{thm-A}.
First, we consider a group extension of a zero-entropy TDS with countably many ergodic measures.

For a TDS $(X,T)$, let $\operatorname{Aut}(X)=\operatorname{Aut}(X,T)$ denote the group of all automorphisms of the system, i.e., the collection of all homeomorphisms $\phi\colon X\to X$ satisfying $\phi\circ T=T\circ\phi$.
Equipped with the uniform topology, $\operatorname{Aut}(X)$ is a Polish group.
If $K$ is a compact subgroup of $\operatorname{Aut}(X)$, then the map $x\mapsto Kx$ defines a factor map $(X,T)\to (Y,R)$ with $Y=X/K$ and relation
$R_{\pi}=\{(x,kx):x\in X,\;k\in K\}$.
Such an extension is called a \emph{group extension}.

\begin{thm}\label{LSC-group-extension}
Let $\pi\colon(X,T)\to(Y,R)$ be a group extension between two TDSs.
If $(Y,R)$ has zero entropy and only countably many ergodic measures, then the logarithmic Sarnak conjecture holds for $(X,T)$.
\end{thm}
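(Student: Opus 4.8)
The plan is to deduce this from Theorem \ref{thm-A} by showing that a group extension $(X,T)$ of a zero-entropy TDS $(Y,R)$ with countably many ergodic measures is itself a zero-entropy TDS with almost countable spectrum. Zero entropy is immediate: $h_{\rm top}(X,T)=h_{\rm top}(Y,R)=0$ because a compact-group extension has zero relative topological entropy (the fibers $Kx$ are covered by $K$, a compact group, which being abelian-by-finite — or more directly, being a compact group acting freely — carries no topological entropy; alternatively one invokes the variational principle together with the fact that the relative measure-theoretic entropy over $\mathcal{B}_Y$ vanishes for any invariant measure, since the fiber system is a rotation on the homogeneous space $K/K_x$).

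So the real content is: every $\mu\in\mathcal{M}(X,T)$ has almost countable spectrum. Let me reduce to ergodic $\mu$ (since the almost-countable-spectrum condition is phrased through the ergodic decomposition, and the union over $\mathcal{M}(X,T)$ of the relevant countable sets will still be countable because $\mathcal{M}(Y,R)$ has only countably many ergodic measures, so there are only countably many ergodic measures on $Y$ to ``sit over,'' and we will produce one countable exceptional set per ergodic measure on $Y$). Fix an ergodic $m\in\mathcal{M}^e(X,T)$, and let $\nu=\pi_*m\in\mathcal{M}^e(Y,R)$, which is one of the countably many ergodic measures of $(Y,R)$. The structure theory of group extensions says $(X,\mathcal{X},m,T)$ is measurably isomorphic to a skew product $(Y\times K/K_0,\ \nu\times m_{K/K_0},\ (y,gK_0)\mapsto(Ry,\sigma(y)gK_0))$ for a suitable closed subgroup $K_0\le K$ and cocycle $\sigma:Y\to K$, with $m_{K/K_0}$ Haar measure. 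The key spectral fact I want is: the eigenvalues of such a skew product are controlled by the eigenvalues of the base $(Y,\mathcal{B}_Y,\nu,R)$ together with the (countable) dual group $\widehat{K/K_0}$. Concretely, if $U_T f=\lambda f$, decompose $f$ along the Peter--Weyl / Fourier decomposition of $L^2(K/K_0)$ into isotypic components indexed by $\chi\in\widehat{K/K_0}$; each nonzero component $f_\chi(y,\cdot)=c_\chi(y)\chi(\cdot)$ must satisfy $c_\chi(Ry)\,\chi(\sigma(y))=\lambda\, c_\chi(y)$, i.e. $c_\chi$ is an eigenfunction (with eigenvalue $\lambda$) of the $\chi$-twisted operator $U_{R,\chi} g:= \overline{\chi(\sigma(\cdot))}\,g\circ R$ on $L^2(Y,\nu)$.

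Thus $\operatorname{Spec}(X,\mathcal{X},m,T)\subset \bigcup_{\chi\in\widehat{K/K_0}}\operatorname{Spec}_\chi(Y,\nu,R)$, a countable union. Each $\operatorname{Spec}_\chi(Y,\nu,R)$ — the point spectrum of a unitary-cocycle-twisted Koopman operator — is countable just as ordinary point spectrum is (eigenfunctions for distinct eigenvalues of the fixed unitary operator $U_{R,\chi}$ are orthogonal in the separable space $L^2(Y,\nu)$). Hence the set $C_\nu:=\bigcup_{K_0\le K\ \text{closed}}\ \bigcup_{\chi\in\widehat{K/K_0}}\operatorname{Spec}_\chi(Y,\nu,R)$ — which we may take to be the union over the countably many $\chi\in\widehat K$ of $\operatorname{Spec}_\chi(Y,\nu,R)$ after pulling back characters from $K/K_0$ to $K$ — is a countable subset of $\mathbb{T}$ depending only on $\nu$, and $\operatorname{Spec}(X,\mathcal{X},m,T)\subset C_\nu$ for every ergodic $m$ with $\pi_*m=\nu$. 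Finally, for a general invariant $\mu$ on $X$, in its ergodic decomposition $\tau$-a.e.\ ergodic component $m$ has $\pi_*m$ equal to one of the countably many ergodic measures $\nu_1,\nu_2,\dots$ of $(Y,R)$, so $\operatorname{Spec}(X,\mathcal{X},m,T)\subset C_\mu:=\bigcup_{j}C_{\nu_j}$, a fixed countable set. Therefore $(X,T)$ has almost countable spectrum, and Theorem \ref{thm-A} applies.

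The main obstacle I anticipate is making the ``structure of a group extension'' step fully rigorous at the level of the ergodic component: a priori the orbit closure structure of $m$ need not see all of $K$, so one must identify the correct stabilizer $K_0$ and verify the skew-product representation (this is a version of the Furstenberg/Zimmer description of isometric extensions, or Mackey's theory of cocycles, adapted to the topological-to-measurable passage), together with checking that Fubini and the Peter--Weyl decomposition are applied on the right $\sigma$-algebras so that the eigenfunction components $c_\chi$ genuinely lie in $L^2(Y,\nu)$. A secondary technical point is confirming that only countably many closed subgroups $K_0$ (equivalently, only countably many characters pulled back from quotients) actually arise, which is harmless since $\widehat K$ itself is countable when $K$ is a compact metrizable group — so the union is over a countable index set regardless.
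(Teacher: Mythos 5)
Your overall reduction is the same as the paper's: prove that $(X,T)$ has zero entropy and almost countable spectrum and then invoke Theorem~\ref{thm-A} (this is exactly Proposition~\ref{SZ-group-extension} in the paper). Your method for the spectral step, however, is genuinely different --- Furstenberg--Zimmer/Mackey skew-product representations plus Peter--Weyl on the fiber, versus the paper's soft argument --- and as written it has a gap at precisely the point the paper's argument is built to handle.

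The gap is the uniformity of the countable set $C_\nu$ over all ergodic lifts of a fixed $\nu$. In your skew-product representation the subgroup $K_0$ and the cocycle $\sigma$ are attached to the \emph{particular} ergodic component $m$, so the twisted point spectra you denote $\operatorname{Spec}_\chi(Y,\nu,R)$ secretly depend on $m$ through $\sigma$; moreover the union $\bigcup_{K_0\le K}$ runs over an index set that need not be countable (e.g.\ $K=(\mathbb{Z}/2\mathbb{Z})^{\mathbb{N}}$ has uncountably many closed subgroups). Since a single ergodic $\nu$ can carry uncountably many ergodic lifts (already for $X=\mathbb{T}^2$, $T(y,z)=(y+\alpha,z)$, $K=\mathbb{T}$ acting on the second coordinate), the assertion that your set ``depend[s] only on $\nu$'' is exactly what needs proof, and your remark that $\widehat{K}$ is countable fixes the character index set but not the $m$-dependence of the cocycle. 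The missing ingredient is the paper's elementary generic-point argument: every ergodic $m$ with $\pi_*m=\nu$ equals $g_*\phi(\nu)$ for some $g\in K$ and one fixed lift $\phi(\nu)$ (equation~\eqref{eq-x-y}), so all ergodic lifts of $\nu$ have \emph{identical} spectrum and one countable set per $\nu$ suffices; with that fact the harmonic-analysis machinery becomes unnecessary, and without it your uniformity claim is unproved. Two secondary points: for non-abelian $K$ the quotient $K/K_0$ is only a homogeneous space, so $\widehat{K/K_0}$ and the formula $f_\chi(y,\cdot)=c_\chi(y)\chi(\cdot)$ must be replaced by the full isotypic decomposition with matrix-valued coefficients (countability survives, but your displayed eigenvalue equation does not); and the measurable skew-product representation of an ergodic compact-group extension that you invoke is itself a nontrivial structure theorem which the paper avoids entirely.
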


A TDS is called \emph{minimal} if it has no non-trivial closed invariant subset. A TDS $(X,T)$ is \emph{distal} if $\inf_{n\in\mathbb Z}d(T^nx,T^ny)>0$ for all $x\ne y$ in $X$.
In a distal TDS every point $x\in X$ is minimal: the closure of the orbit $\{T^nx:n\in\mathbb Z\}$ is a minimal set.
It is well known that a distal TDS has zero entropy \cite{P68}.
The most typical distal TDS are the \emph{isometric} (also called \emph{equicontinuous}) ones: we say $(X,T)$ is \emph{isometric} if there exists a compatible metric $d$ on $X$ such that every $T^n:X\to X$ is an isometry, i.e.
\(d(T^nx,T^ny)=d(x,y)\) for all $n\in\mathbb{Z}$ and $x,y\in X$.

In 1963 Furstenberg introduced the notions of isometric extension and quasi-isometric TDS (see \cite[Definitions 2.1 and 2.5]{F63}) and proved a remarkable theorem (\cite[Theorem 2.4]{F63}):
every minimal distal TDS is quasi-isometric, i.e. it can be obtained by a (possibly transfinite) tower of isometric extensions starting from the one-point system; the tower may have any countable ordinal $\eta$ height. In Section 13 of \cite{F63} the \emph{order} of a minimal distal TDS is defined as follows.
In the construction of a quasi-isometric TDS there appears an ordinal $\eta$ that counts the number of successive isometric extensions needed to reach the given system from the trivial one.
This ordinal is not unique: if $T:\mathbb T\to\mathbb T$ is an irrational rotation, then $\phi(z)=z^2$ defines a factor $(\mathbb T,T^2)$ of $(\mathbb T,T)$;
$(\mathbb T,T^2)$ is an isometric extension of the trivial system and $(\mathbb T,T)$ is an isometric extension of $(\mathbb T,T^2)$, so the ordinal could be 1, 2, or any finite integer.
Nevertheless, the least such ordinal is well defined.
For a minimal distal TDS $(X,T)$ its \emph{order} is the smallest ordinal $\eta$ for which $(X,T)$ appears as the top of a quasi-isometric tower of height $\eta$ (see \cite[Definitions 2.4 and 2.5]{F63}).

Clearly the order of a non-trivial minimal isometric TDS is $1$. The Halmos-von Neumann theorem tells us that a minimal isometric TDS is \emph{uniquely ergodic} (it carries a single invariant measure) and, up to topological conjugacy, is completely determined by its eigenvalues: the corresponding countable subgroup of $\mathbb T$.
Concretely, it is topologically conjugate to a minimal rotation on a compact abelian metrisable group (see e.g.\ \cite[Theorem 5.18]{W82}).
By Theorem~\ref{LSC-group-extension}, the Logarithmic Sarnak Conjecture holds for every group extension of a minimal isometric TDS.

Next, observe that every isometric extension between minimal systems can be lifted to a group extension (see \cite[Definition 2.1]{F63}).
Hence a minimal distal TDS of order $2$ is a factor of a group extension of a minimal isometric TDS.
Since the logarithmic M\"{o}bius disjointness property \eqref{log-Sarnak} descends along factor maps, we obtain

\begin{thm}\label{LSC-distal-order2}
Let $(X,T)$ be a distal TDS. If the order of every minimal subsystem of $(X,T)$ is at most $2$, then the logarithmic Sarnak conjecture holds for $(X,T)$.
\end{thm}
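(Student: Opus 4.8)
The plan is to localize \eqref{log-Sarnak} to an orbit closure, reduce to the minimal case, and then invoke the structure theory recalled above together with Theorem~\ref{LSC-group-extension}. First I would note that, for a fixed continuous $f\colon X\to\mathbb C$ and a fixed $x\in X$, the average in \eqref{log-Sarnak} depends only on the orbit closure $M:=\overline{\{T^nx:n\in\mathbb Z\}}$: since $(X,T)$ is distal, $M$ is a minimal subsystem, it is itself distal, and by hypothesis its order is at most $2$; writing $g:=f|_M\in C(M)$ we have $f(T^nx)=g(T^nx)$ for every $n$, so the average for $(X,T,f,x)$ coincides with the one for $(M,T|_M,g,x)$. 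Hence it suffices to prove the theorem when $(X,T)$ is itself a minimal distal TDS of order at most $2$.

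If the order of $(X,T)$ is at most $1$, then $(X,T)$ is trivial or minimal isometric; by the Halmos--von Neumann theorem it is uniquely ergodic, and being distal it has zero topological entropy, so Theorem~\ref{thm-A} (equivalently \cite[Theorem~1.1]{FHost}) gives \eqref{log-Sarnak}. If the order of $(X,T)$ is $2$, then, as explained in the paragraph preceding the statement, $(X,T)$ is a factor of a group extension $\pi'\colon(\widetilde X,\widetilde T)\to(Y,R)$ of a minimal isometric TDS $(Y,R)$; fix a factor map $\rho\colon(\widetilde X,\widetilde T)\to(X,T)$. By Halmos--von Neumann $(Y,R)$ is uniquely ergodic and, being isometric, has zero entropy, so Theorem~\ref{LSC-group-extension} applies to $(\widetilde X,\widetilde T)$ and yields \eqref{log-Sarnak} for it. Finally I would push this down along $\rho$: given $x\in X$ choose $\widetilde x\in\rho^{-1}(x)$ and, for $f\in C(X)$, apply \eqref{log-Sarnak} in $(\widetilde X,\widetilde T)$ to the continuous function $f\circ\rho$ and the point $\widetilde x$; since $(f\circ\rho)(\widetilde T^n\widetilde x)=f(T^n x)$ for all $n$, the two logarithmic averages agree, and \eqref{log-Sarnak} for $(X,T)$ follows.

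The result is in the end an assembly of ingredients already in place, so there is no single hard analytic step; the point that requires the most care is the reduction to the minimal case. Indeed a distal $(X,T)$ may contain uncountably many minimal subsystems and therefore carry uncountably many ergodic measures with, a priori, an uncountable union of spectra, so one cannot hope to verify the hypothesis of Theorem~\ref{thm-A} for $(X,T)$ itself; one must instead use that \eqref{log-Sarnak} is a statement about a single orbit and hence restricts to the minimal orbit closure of the chosen base point, where the hypothesis on orders applies. A secondary point to check is that the order-$2$ structure genuinely furnishes a \emph{group} extension whose base meets the hypotheses of Theorem~\ref{LSC-group-extension}; this is exactly what the Halmos--von Neumann theorem and the liftability of isometric extensions between minimal systems to group extensions provide.
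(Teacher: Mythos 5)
Your proof is correct and follows essentially the same route as the paper, which derives Theorem~\ref{LSC-distal-order2} in the introduction by observing that a minimal distal TDS of order $2$ is a factor of a group extension of a minimal isometric (hence uniquely ergodic, zero-entropy) TDS and that \eqref{log-Sarnak} descends along factor maps. Your write-up merely makes explicit the steps the paper leaves implicit — the localization to the minimal orbit closure of the base point (valid since every point of a distal system is minimal), the order-$\le 1$ case, and the pointwise descent along $\rho$ — all of which are handled correctly.
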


Let $\mathbb{T}=\mathbb{R}/\mathbb{Z}$ denote the circle and $d\in \mathbb{N}$.
The \emph{Anzai skew products} $(\mathbb{T}^{d+1},T_{\alpha,\phi})$, where $\alpha\in\mathbb{R}^d$, $\phi\colon\mathbb{T}^d\to\mathbb{T}$ is continuous and
\[
T_{\alpha,\phi}(x,y):=(x+\alpha,y+\phi(x))\qquad\text{for all }(x,y)\in\mathbb{T}^{d+1},
\]
will often be denoted simply by $T_{\alpha,\phi}$.
As a direct application of Theorem~\ref{LSC-distal-order2} we obtain

\begin{cor}
For any  $\alpha\in\mathbb{R}^d$ and any continuous $\phi\colon\mathbb{T}^d\to\mathbb{T}$, the logarithmic Sarnak conjecture holds for the Anzai skew product $T_{\alpha,\phi}$.
\end{cor}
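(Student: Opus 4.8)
The plan is to check that the Anzai skew product $(\mathbb{T}^{d+1},T_{\alpha,\phi})$ satisfies the two hypotheses of Theorem~\ref{LSC-distal-order2}: it is distal, and every minimal subsystem has order at most $2$. Write $S\colon\mathbb{T}^d\to\mathbb{T}^d$ for the rotation $Sx=x+\alpha$ and $\pi\colon\mathbb{T}^{d+1}\to\mathbb{T}^d$ for the projection onto the first $d$ coordinates, so that $\pi\circ T_{\alpha,\phi}=S\circ\pi$. The structural fact underlying everything is that $T_{\alpha,\phi}$ is an \emph{isometric extension} of $(\mathbb{T}^d,S)$ in Furstenberg's sense: on the relation $R_\pi=\{((x,y),(x,y')):x\in\mathbb{T}^d,\ y,y'\in\mathbb{T}\}$ the assignment $\rho((x,y),(x,y')):=d_{\mathbb{T}}(y,y')$ defines a continuous function that restricts to a metric on each fiber $\{x\}\times\mathbb{T}$ and satisfies $\rho(T_{\alpha,\phi}z,T_{\alpha,\phi}z')=\rho(z,z')$ for all $(z,z')\in R_\pi$, since $y\mapsto y+\phi(x)$ is an isometry of $\mathbb{T}$.

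For distality, note that $(\mathbb{T}^d,S)$ is isometric, hence distal, and an isometric extension of a distal system is distal \cite{F63}; therefore $(\mathbb{T}^{d+1},T_{\alpha,\phi})$ is distal. (A direct check is equally short: if $(x,y)\ne(x',y')$ and $x\ne x'$ then $d(T_{\alpha,\phi}^n(x,y),T_{\alpha,\phi}^n(x',y'))\ge d_{\mathbb{T}^d}(x,x')>0$ for every $n$, while if $x=x'$ the second coordinates of the two orbits stay at distance $d_{\mathbb{T}}(y,y')>0$.)

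For the order condition, let $M\subseteq\mathbb{T}^{d+1}$ be a minimal subsystem. Its image $N:=\pi(M)$ is a minimal subsystem of $(\mathbb{T}^d,S)$; every minimal subsystem of the rotation $(\mathbb{T}^d,S)$ is a coset of the closed subgroup $H:=\overline{\{n\alpha:n\in\mathbb{Z}\}}$, on which $S$ still acts isometrically (the flat metric is $S$-invariant), so $N$ is a minimal isometric TDS and has order at most $1$. Restricting the fiber metric $\rho$ above to $R_\pi\cap(M\times M)$ exhibits $T_{\alpha,\phi}|_M\colon M\to M$ as an isometric extension of $(N,S|_N)$ — it is an extension because $\pi(M)=N$. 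Hence $(M,T_{\alpha,\phi}|_M)$ is a minimal distal TDS of order at most $2$, and Theorem~\ref{LSC-distal-order2} delivers the logarithmic Sarnak conjecture for $T_{\alpha,\phi}$.

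There is no serious obstacle here; the only point that requires a little care is the descent to minimal subsystems of the possibly non-minimal base $(\mathbb{T}^d,S)$, where one must recognise these subsystems as minimal group rotations (so that the bound ``order $\le 1$'' is legitimate, also when $\alpha$ is rational or zero) and verify that the isometric-extension structure of $T_{\alpha,\phi}$ survives restriction to the invariant set $M$.
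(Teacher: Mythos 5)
Your proposal is correct and follows exactly the route the paper intends: the corollary is stated there as a direct application of Theorem~\ref{LSC-distal-order2}, and your verification that $T_{\alpha,\phi}$ is distal and that each minimal subsystem is an isometric extension of a minimal rotation (hence of order at most $2$) is precisely the omitted check. The care you take with minimal subsystems of a possibly non-minimal base is appropriate and matches the hypotheses of the theorem as stated.
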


When $d=1$, we remark that Liu and Sarnak~\cite{LS15} showed that if $\alpha\in \mathbb{R}$ is rational, then the Sarnak conjecture holds for $T_{\alpha,\phi}$.
The first M\"obius-disjointness result for \emph{all} $\alpha$ was established by Liu and Sarnak~\cite{LS15}, who proved the conjecture for $T_{\alpha,\phi}$ with $\phi$ analytic and satisfying the technical condition $|\hat{\phi}(m)|\gg e^{-\tau|m|}$ for some $\tau>0$.
A refinement was obtained by Wang~\cite{W17}, who removed the lower-bound requirement on Fourier coefficients and verified the conjecture for analytic $\phi$. Huang, Wang and Ye~\cite{HWY} later improved this to cover every $\phi\in C^{\infty}$.
In 2021, Kanigowski, Lema\'nczyk and Radziwi\l\l~\cite{KLR} confirmed the conjecture for $\phi\in C^{2+\varepsilon}$ with $\hat{\phi}(0)=0$, where $\varepsilon>0$ is arbitrary.
Finally, de Faveri~\cite{DF22} improved the result to the case $\phi\in C^{1+\varepsilon}$.
We also note that, by \cite[Remark~2.5.7]{KL15} or \cite[Corollary~2.6]{W17}, if $\phi$ is Lipschitz continuous and not homotopically trivial, then the Sarnak conjecture holds for $T_{\alpha,\phi}$. Wei \cite{W}
showed that the logarithmic Sarnak conjecture holds for $T_{\alpha,\phi}$ when $\phi$ is Lipschitz continuous.

Next we consider time-one maps of continuous suspension flows over zero entropy TDSs with countably many ergodic measures. Let us recall some basic facts about suspension flow that can be found in the book by Parry and Pollicott \cite{PP}. Let $(X,T)$ be a TDS. Let $r\colon X\to(0,\infty)$ be a strictly positive, continuous function (referred to as a \emph{roof function} on $X$), and consider the space
\[
X_r=\bigl\{(x,s):x\in X,\;0\le s\le r(x)\bigr\}
\]
with the points $(x,r(x))$ and $(Tx,0)$ identified for each $x\in X$.

The \emph{suspension flow} $\Phi=(\varphi_t)_{t\in\mathbb R}$ over $(X,T)$ with roof function $r(x)$ is the flow on $X_r$ defined, for $(x,s)\in X_r$ and $t\in\mathbb R$, by
\[
\varphi_t(x,s)=\bigl(T^nx,\;s+t-r_n(x)\bigr),
\]
where
\(
r_n(x)=
\begin{cases}
\sum_{i=0}^{n-1}r(T^ix), & n>0\\
0, & n=0\\
\sum_{i=1}^{|n|}r(T^{-i}x), & n<0
\end{cases}
\)
and $n\in\mathbb{Z}$ is the unique integer satisfying
\(
r_n(x)\le s+t<r_{n+1}(x).
\)
Endowing $X_r$ with the Bowen-Walters metric makes $\Phi$ a continuous $\mathbb R$-action (see \cite[Section 4]{BW}).

\begin{thm}\label{LSC-susp}Let $(X,T)$ be a TDS with zero entropy and countable many ergodic measures. For any continuous function $r:X\to (0,+\infty)$, the logarithmic Sarnak conjecture holds for $(X_r,\varphi_1)$.
\end{thm}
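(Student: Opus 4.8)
The plan is to deduce Theorem~\ref{LSC-susp} from Theorem~\ref{thm-A} by verifying that the time-one map $(X_r,\varphi_1)$ has zero topological entropy and almost countable spectrum. The starting point is the standard description of the suspension flow $\Phi=(\varphi_t)_{t\in\mathbb{R}}$ (see \cite{PP}): the map $m\mapsto m_r:=\tfrac{1}{\int_X r\,dm}(m\times\mathrm{Leb})|_{X_r}$ is an affine bijection of $\mathcal{M}(X,T)$ onto the set of $\Phi$-invariant Borel probability measures on $X_r$, carrying ergodic measures to ergodic measures. Hence, since $\mathcal{M}^e(X,T)$ is countable, $\Phi$ has only countably many ergodic measures $m_{1,r},m_{2,r},\dots$. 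Zero entropy is then quick: for any $\nu\in\mathcal{M}(X_r,\varphi_1)$ the averaged measure $\bar\nu:=\int_0^1(\varphi_s)_*\nu\,ds$ is $\Phi$-invariant, and since each $\varphi_s$ conjugates $\varphi_1$ to itself and metric entropy is affine one gets $h_\nu(\varphi_1)=h_{\bar\nu}(\varphi_1)$; writing $\bar\nu=\sum_i a_i m_{i,r}$ and invoking Abramov's formula, $h_{m_{i,r}}(\varphi_1)=h_{m_i}(T)/\!\int r\,dm_i=0$ because $\htop(T)=0$, so $h_\nu(\varphi_1)=0$ and the variational principle gives $\htop(X_r,\varphi_1)=0$.

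For the spectrum, for each $j$ let $S_j\subseteq\mathbb{R}$ be the (countable) group of eigenvalues of the flow $\Phi$ on $L^2(m_{j,r})$ --- i.e.\ the $\beta$ admitting a nonzero $F\in L^2(m_{j,r})$ with $F\circ\varphi_t=e^{2\pi i\beta t}F$ for all $t$ --- and put $C:=\{\,e^{2\pi i\beta}:\beta\in\bigcup_j S_j\,\}\subseteq\mathbb{T}$, a countable set. I would show that $\operatorname{Spec}(X_r,\nu,\varphi_1)\subseteq C$ for every $\varphi_1$-ergodic $\nu$; applied to the ergodic components of an arbitrary invariant measure, this proves that $(X_r,\varphi_1)$ has almost countable spectrum (indeed with one countable set serving all measures). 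So let $e^{2\pi i\theta}$ be an eigenvalue of $\varphi_1$ with eigenfunction $g$; after replacing $g$ by $g\cdot\mathbf{1}_E$ for a suitable $\varphi_1$-invariant Borel set $E$ with $\nu(E)=1$, and using ergodicity of $\nu$, we may assume $g\circ\varphi_1=e^{2\pi i\theta}g$ holds \emph{everywhere}, $|g|\le 1$, and $|g|=1$ $\nu$-a.e. The key device is to pass to $X_r\times\mathbb{T}$, where $\mathbb{T}=\mathbb{R}/\mathbb{Z}$ carries the unit-speed rotation flow $R_t\zeta=\zeta+t$, and to consider the measure $\omega:=\int_0^1\bigl((\varphi_s)_*\nu\bigr)\otimes\delta_s\,ds$. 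Using $(\varphi_{s+1})_*\nu=(\varphi_s)_*\nu$ one checks that $\omega$ is $\Phi\times R$-invariant, with $X_r$-marginal $\bar\nu$ and $\mathbb{T}$-marginal Lebesgue. Then $\widehat H(w,\zeta):=e^{2\pi i\theta\zeta}\,g(\varphi_{-\zeta}w)$ is well defined on $X_r\times\mathbb{T}$ (because $g\circ\varphi_{-1}=e^{-2\pi i\theta}g$), is nonzero in $L^2(\omega)$ since $|\widehat H|=1$ $\omega$-a.e., and satisfies $\widehat H\circ(\Phi\times R)_t=e^{2\pi i\theta t}\widehat H$; thus $\theta$ is an eigenvalue of the flow $\Phi\times R$ for the measure $\omega$.

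Next I would split $\omega=\sum_i a_i\omega_i$ according to the decomposition $\bar\nu=\sum_i a_i m_{i,r}$, using disjoint $\Phi$-invariant Borel sets $X^{(i)}\subseteq X_r$ with $m_{j,r}(X^{(i)})=\delta_{ij}$; each $\omega_i$ is then a joining of $(X_r,m_{i,r},\Phi)$ with $(\mathbb{T},\mathrm{Leb},R)$ (its $\mathbb{T}$-marginal is $R$-invariant, hence Lebesgue by unique ergodicity of $R$), and $\theta$ is an eigenvalue of $\Phi\times R$ for some $\omega_{i_0}$. The remaining ingredient is a joining-spectrum lemma: \emph{if $\kappa$ is a joining of a measure-preserving flow $(\Omega,m,\Psi)$ with $(\mathbb{T},\mathrm{Leb},R)$, then every eigenvalue of $\Psi\times R$ on $L^2(\kappa)$ lies in $S+\mathbb{Z}$}, where $S$ is the eigenvalue group of $\Psi$ on $L^2(m)$. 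For this, note that the conditional expectation $\mathbb{E}_\kappa[\,\cdot\mid\mathcal{G}]$ onto the $\sigma$-algebra $\mathcal{G}$ generated by the $\Omega$-coordinate commutes with the Koopman operators of $\Psi\times R$, since $(\Psi\times R)_t^{-1}\mathcal{G}=\mathcal{G}$. If $F\neq 0$ is an eigenfunction with eigenvalue $\gamma$, then $e^{-2\pi i n\zeta}F$ is an eigenfunction with eigenvalue $\gamma-n$, so $k_n:=\mathbb{E}_\kappa[e^{-2\pi i n\zeta}F\mid\mathcal{G}]$, regarded as an element of $L^2(\Omega,m)$, satisfies $k_n\circ\Psi_t=e^{2\pi i(\gamma-n)t}k_n$; and not every $k_n$ can vanish, for otherwise $F$ would be orthogonal to every function $G(w)e^{2\pi i n\zeta}$ ($G\in L^2(\Omega,m)$, $n\in\mathbb{Z}$), and these span $L^2(\kappa)$ because the $\mathbb{T}$-marginal of $\kappa$ is Lebesgue. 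Thus some $\gamma-n_0\in S$. Applying this to $\omega_{i_0}$ gives $\theta\in S_{i_0}+\mathbb{Z}$, hence $e^{2\pi i\theta}\in C$, and the argument is complete.

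The only genuinely delicate point is the transfer carried out in the second paragraph: because $\nu$ is merely $\varphi_1$-invariant, not $\Phi$-invariant, and in general $\nu\not\ll\bar\nu$, one cannot obtain a $\Phi$-eigenfunction for $\bar\nu$ by integrating a $\varphi_1$-eigenfunction along the flow --- that integral may be identically zero on a $\bar\nu$-full set. Tensoring with the rotation circle $\mathbb{T}$, which records the flow parameter, is precisely what makes the construction of $\widehat H$ go through, after which the joining-spectrum lemma closes the gap. The rest --- the ergodic-decomposition bookkeeping, the verification that $\omega$ is $\Phi\times R$-invariant, and the passage through $\bar\nu$ needed to invoke Abramov's formula --- is routine.
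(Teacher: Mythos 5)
Your proposal is correct and reaches the same reduction as the paper (Ambrose--Kakutani for the countability of flow-ergodic measures, Abramov plus the variational principle for zero entropy, then Theorem~\ref{thm-A}), but the way you control the spectrum of the $\varphi_1$-ergodic measures is genuinely different from the paper's Proposition~\ref{SZ-susp}. The paper argues structurally: for a $\varphi_1$-ergodic $\nu$, the average $\int_0^1(\varphi_t)_*\nu\,dt$ is a flow-ergodic measure $\tilde{\mu}_\nu$, and by uniqueness of the ergodic decomposition of $\tilde{\mu}_\nu$ under $\varphi_1$ one gets $\nu=(\varphi_s)_*m_{\tilde{\mu}_\nu}$ for some $s\in[0,1)$ and a fixed representative $m_{\tilde{\mu}_\nu}$; since $\varphi_s$ commutes with $\varphi_1$ this forces $\operatorname{Spec}(X_r,\mathcal{X}_r,\nu,\varphi_1)=\operatorname{Spec}(X_r,\mathcal{X}_r,m_{\tilde{\mu}_\nu},\varphi_1)$, and the union of these spectra over the countably many flow-ergodic classes is the desired countable set. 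You instead bound $\operatorname{Spec}(X_r,\mathcal{X}_r,\nu,\varphi_1)$ by the flow-eigenvalue groups modulo $\mathbb{Z}$: you tensor with the unit-speed rotation on $\mathbb{T}$, build the $(\Phi\times R)$-invariant measure $\omega$ and the flow-eigenfunction $\widehat H$, and then invoke a joining-spectrum lemma (proved by conditioning on the first coordinate and Fourier-expanding in $\zeta$, using that the $\mathbb{T}$-marginal is Lebesgue). Both arguments are sound. The paper's route is shorter but leans on identifying $\nu$ exactly as a time-shift of a chosen representative, i.e.\ on uniqueness of the ergodic decomposition as a measure on $\mathcal{M}^e(X_r,\varphi_1)$; your route avoids that identification entirely and yields a more explicit containing set (the flow eigenvalues reduced mod $\mathbb{Z}$), at the cost of the auxiliary product construction and the extra lemma. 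Your closing remark about why one cannot simply integrate a $\varphi_1$-eigenfunction along the flow is a correct diagnosis of the genuine difficulty, and your workaround handles it.
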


We now turn to TDSs whose maximal pattern entropy is finite.
Systems with finite maximal pattern entropy provide further examples possessing almost countable spectrum.
Motivated by the notion of maximal pattern complexity introduced by Kamae and Zamboni \cite{KZ02} in 2002, Huang and Ye \cite{HY09} defined the maximal pattern entropy for an invariant measure and for a TDS alike.
Maximal pattern entropy is intimately related to sequence entropy, an important invariant for measuring the complexity of zero-entropy systems.
Topological sequence entropy and measure-theoretic sequence entropy were introduced by Goodman \cite{G} and Ku\v{s}nirenko \cite{Ku}, respectively.
As shown in \cite{HY09}:
\begin{enumerate}
\item[(i)] maximal pattern entropy coincides with the supremum of sequence entropies taken over all sequences;

\item[(ii)] the maximal pattern entropy of a TDS equals $\log k$ for some integer $k$ or is $+\infty$, where $k$ is the maximal length of a so-called intrinsic sequence-entropy tuple;

\item[(iii)] an invariant measure of a TDS has zero maximal pattern entropy if and only if it has discrete spectrum.
\end{enumerate}

\begin{thm}\label{LSC-bounded-m-MPE} Let $(X,T)$ be a TDS. If every invariant probability measure of $(X,T)$ has finite maximal pattern entropy, then  the logarithmic Sarnak conjecture holds for $(X,T)$.
\end{thm}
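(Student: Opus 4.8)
The plan is to deduce Theorem~\ref{LSC-bounded-m-MPE} from Theorem~\ref{thm-A}: under the hypothesis I must check that $(X,T)$ has zero topological entropy and almost countable spectrum, and then invoke Theorem~\ref{thm-A}. \emph{Zero entropy} is the easy half. If $\htop(T)>0$, by the variational principle there is an ergodic $\mu\in\mathcal M^{e}(X,T)$ and a finite partition $P$ with $h_{\mu}(T,P)=h'>0$, so $H_{\mu}\bigl(\bigvee_{i=0}^{N-1}T^{-i}P\bigr)\ge Nh'$ for all $N$. Putting $\mathcal V_{N}=\bigvee_{i=0}^{N-1}T^{-i}P$ and taking the arithmetic progression $A_{N}=\{jN:j\ge 0\}$, one has $\bigvee_{j=0}^{n-1}T^{-jN}\mathcal V_{N}=\bigvee_{l=0}^{nN-1}T^{-l}P$, so the $A_{N}$-sequence entropy of $\mathcal V_{N}$ is at least $Nh'$; letting $N\to\infty$ shows $\mu$ has infinite maximal pattern entropy, contradicting the hypothesis. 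Hence $\htop(T)=0$.

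For \emph{almost countable spectrum} I would argue by contradiction. Suppose $\mu\in\mathcal M(X,T)$, with ergodic decomposition $\mu=\int m\,d\tau(m)$, fails to have almost countable spectrum. The set $E=\{(m,\lambda)\in\mathcal M^{e}(X,T)\times\mathbb{T}:\lambda\in\operatorname{Spec}(X,\mathcal X,m,T)\}$ is Borel (an eigenvalue at $\lambda=e^{2\pi i\theta}$ is detected by non-vanishing of the Ces\`aro limit of $e^{-2\pi i n\theta}\int g\,\overline{g\circ T^{n}}\,dm$ over a fixed countable dense subset of $C(X)$), and each section $E_{m}=\operatorname{Spec}(X,\mathcal X,m,T)$ is countable. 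By the Lusin--Novikov theorem $E$ is a countable union of graphs of Borel partial functions $\lambda_{j}\colon\mathcal M^{e}(X,T)\rightharpoonup\mathbb{T}$ with $\lambda_{j}(m)\in\operatorname{Spec}(X,\mathcal X,m,T)$. If every push-forward $(\lambda_{j})_{*}\tau$ were purely atomic, the union of the countably many atoms would be a countable set containing $\operatorname{Spec}(X,\mathcal X,m,T)$ for $\tau$-a.e.\ $m$, i.e.\ $\mu$ would have almost countable spectrum. So some $\lambda_{j_{0}}(m)=e^{2\pi i\theta(m)}$, with $\theta$ defined on a set $B$ of positive $\tau$-measure, has non-atomic push-forward $\nu=\theta_{*}(\tau|_{B})$; in particular $\theta(m)$ is irrational for $\tau|_{B}$-a.e.\ $m$.

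Now $X_{B}=\{x:m_{x}\in B\}$ is $T$-invariant (since $m_{x}$ is), so $\mu_{B}:=\mu(\,\cdot\mid X_{B})\in\mathcal M(X,T)$. Choosing, measurably in $m\in B$, a unit-modulus eigenfunction $f_{m}$ for $e^{2\pi i\theta(m)}$ and setting $F(x)=f_{m_{x}}(x)$ on $X_{B}$ yields a measurable $F\colon X_{B}\to\mathbb{T}$ with $F\circ T^{n}=e^{2\pi i n\theta(m_{x})}F$. As $\theta(m_{x})$ is $T$-invariant, the factor of $(X_{B},\mathcal X|_{X_{B}},\mu_{B},T)$ generated by the ergodic-decomposition $\sigma$-algebra together with $\{F\circ T^{n}:n\in\mathbb Z\}$ is measurably isomorphic to the \emph{bundle of circle rotations} over $(B,\tau|_{B})$ whose fibre over $m$ is $(\mathbb{T},\mathrm{Haar},\,t\mapsto t+\theta(m))$ (for $\nu$-a.e.\ $m$ the spectral measure of $f_{m}$ is the $R_{\theta(m)}$-invariant measure on $\mathbb{T}$, hence Haar, since $\theta(m)$ is irrational). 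Because maximal pattern entropy does not increase under factor maps, it suffices to prove that such a rotation bundle with non-atomic frequency distribution $\nu$ has \emph{infinite} maximal pattern entropy; then $h^{*}_{\mu_{B}}(T)=\infty$, contradicting the hypothesis, and almost countable spectrum is established.

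I expect this last assertion to be the main obstacle. It is easy to see the bundle has no eigenfunctions beyond those pulled back from the base $(B,\tau|_{B})$: a Fourier expansion in the fibre variable forces every non-zero mode to vanish $\nu$-a.e., because $n\theta(m)$ equals a prescribed value only on a finite, hence $\nu$-null, set of $m$; so by~(iii) the bundle is not of discrete spectrum and its maximal pattern entropy is positive. The work is to upgrade this to $+\infty$, i.e.\ for every $k$ to exhibit a finite partition $\alpha_{k}$ and a sparse sequence $A_{k}$ whose sequence entropy is at least $\log k$. When $\nu$ is absolutely continuous this is a short base-$k$ computation: conditioning on the fibre coordinate turns the relevant refinement into the base-$k$ digit process of a Lebesgue-distributed point, of entropy rate $\log k$. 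For a general non-atomic $\nu$---which may have entropy dimension zero, so that no single scale carries positive entropy rate---one instead groups the (infinite) entropy of $\nu$ into consecutive blocks, each of entropy at least $\log k$, and realises the read-out of an entire block by a single term of $A_{k}$, chosen as a suitable sum of powers of $k$ that collapses that block's randomness into one residue modulo $k$ detectable by a fixed $k$-arc partition of the fibre; carrying out the required independence and local-central-limit estimates across blocks for an arbitrary non-atomic $\nu$ is the delicate point. Alternatively, if one may invoke a structure theorem that a measure-preserving system of finite maximal pattern entropy is a finite-to-one extension of a system with discrete spectrum, then almost countable spectrum is immediate: the countable eigenvalue group $C$ of the discrete-spectrum base confines the spectrum of every ergodic component of $\mu$ to a countable set determined by $C$ and the (countably many) fibre sizes.
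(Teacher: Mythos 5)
Your overall strategy is the right one: reduce to Theorem~\ref{thm-A} by showing that finite maximal pattern entropy of every invariant measure forces zero entropy and almost countable spectrum. The zero-entropy half is fine. But the almost-countable-spectrum half has a genuine gap, and you name it yourself: after the (essentially sound, modulo measurable-selection details) reduction to a bundle of circle rotations $(m,t)\mapsto(m,t+\theta(m))$ over $(B,\tau|_B)$ with non-atomic frequency distribution $\nu=\theta_*(\tau|_B)$, you must show this bundle has \emph{infinite} maximal pattern entropy, and your proposal leaves exactly this unproved (``carrying out the required independence and local-central-limit estimates across blocks for an arbitrary non-atomic $\nu$ is the delicate point''). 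A direct count shows why a naive argument fails: for the $k$-arc fibre partition $\alpha$ and any $t_1<\dots<t_n$, each fibre sees at most $kn$ atoms of $\bigvee_iT^{-t_i}\alpha$, so the entropy is carried entirely by the variation of $m\mapsto(t_1\theta(m),\dots,t_n\theta(m))$ across the base, and extracting $n\log k$ from an arbitrary non-atomic $\nu$ is precisely the nontrivial content. The missing ingredient is the Huang--Maass--Ye identity $h^*_\mu(T,\alpha)=H_\mu(\alpha\mid\mathcal K(X,\mathcal X,\mu,T))$ (Theorem~\ref{thm-se-enytopy} above): a short Fourier computation shows the Kronecker factor of your bundle is exactly the base $\sigma$-algebra (every nonzero fibre mode of an eigenfunction is supported on $\theta^{-1}(\text{finite set})$, which is $\nu$-null), whence $h^*(\,\cdot\,,\alpha)=\int_BH_{\mathrm{Haar}}(\alpha)\,d\tau=\log k$ for every $k$. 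Without this identity (or an equivalent), your proof does not close.

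It is worth noting that the paper's route is different and avoids the contradiction/selection argument entirely. It uses Theorem~\ref{thm-se-enytopy} together with the fact that atomless conditional measures force $H_{\mu_{\omega,y}}(\alpha_i)\to\infty$ along refining partitions to show (Lemma~\ref{lem-2}) that $\tau$-a.e.\ ergodic component $\mu_\omega$ is an almost everywhere finite-to-one extension of its Kronecker factor; it shows (Lemma~\ref{lem-sp-rho=rhopmega}) that the spectra of a.e.\ ergodic component of the global discrete-spectrum factor lie in one fixed countable set; and it shows (Lemma~\ref{lem-23}, via a disjointness argument with the rotation $R_\lambda$) that a finite-to-one extension can only enlarge the irrational spectrum to rational combinations $\operatorname{Spec}_{\mathrm{irr}}(Y,\mathcal Y,\nu,R)^{\mathbb Q}$. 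Your closing alternative --- ``invoke a structure theorem that finite maximal pattern entropy gives a finite-to-one extension of a discrete spectrum system'' --- is essentially the paper's Lemma~\ref{lem-2}, but you neither prove it nor supply the two further lemmas needed to pass from that structure to a single countable set containing the spectrum of a.e.\ component; calling the conclusion ``immediate'' overstates it.
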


The precise definition of maximal pattern entropy and further properties are given in Subsection~\ref{subsect-mpe}. In particular, Theorem~2.6 of \cite{HY09} (see also Theorem~\ref{MPE-p}~(6) in Subsection~\ref{subsect-mpe}) shows that for a TDS $(X,T)$ the maximal pattern entropy of every invariant measure is less than or equal to the maximal pattern entropy of $(X,T)$. Therefore, as a direct corollary of Theorem~\ref{LSC-bounded-MPE} and Theorem~2.6 of \cite{HY09}, we obtain that the logarithmic Sarnak conjecture holds for any TDS with finite maximal pattern entropy.

By using the notion of IT-tuple introduced by Kerr and Li \cite{KL07} (see also \cite{Huang}), we can strengthen this result further. A TDS \((X,T)\) is said to \emph{have no \(K\)-IT-tuple} (\(K\ge 2\)) if for every \(K\)-tuple of pairwise disjoint closed subsets \((U_1,U_2,\dots,U_K)\) of $X$ there is no infinite independence set; that is, for every infinite set \(S\subset\mathbb N\) there exists \(a\in\{1,2,\dots,K\}^S\) such that
\(\bigcap_{n\in S}T^{-n}U_{a_n}=\emptyset\). A topological dynamical system is called \emph{tame} if its enveloping semigroup is separable and Fr\'echet \cite{G3}. Kerr and Li \cite{KL07} proved that a topological dynamical system is tame if and only if it has no \(2\)-IT-tuple. Huang \cite{Huang} showed that every invariant measure of a tame system has discrete spectrum. Huang, Wang and Zhang \cite{HWZ} obtained that the Sarnak conjecture holds for any TDS whose invariant measures all have discrete spectrum. More generally, Huang, Wang and Ye \cite{HWY} introduced \emph{mean measure complexity} and proved the Sarnak conjecture whenever every invariant measure has sub-polynomial mean measure complexity. Huang and Xu \cite{HX} verified sub-polynomial topological complexity for suspension flows over an irrational rotation with a \(C^\infty\) roof function; combining their result with \cite[Theorem 1.1]{HWY} yields Sarnak's conjecture for the time-one map of these flows. Huang, Xu and Ye \cite{HXY} further reduced the logarithmic Sarnak conjecture to \(\{0,1\}\)-symbolic systems whose mean complexity is polynomial.

We call a topological dynamical system \((X,T)\) \emph{bounded tame} if there exists \(K\ge 2\) such that \((X,T)\) has no \(K\)-IT-tuple. By Corollary 1.5 of \cite{LXZ}, every invariant measure of a bounded tame TDS has finite maximal pattern entropy; consequently the logarithmic Sarnak conjecture holds for every bounded tame TDS. Summing up, as a direct corollary of Theorem~\ref{LSC-bounded-MPE}, Theorem~2.6 of \cite{HY09} and Corollary~1.5 of \cite{LXZ}, we have

\begin{cor}\label{LSC-bounded-MPE}
Let $(X,T)$ be a TDS having finite maximal pattern entropy or being bounded tame. Then the logarithmic Sarnak conjecture holds for $(X,T)$.
\end{cor}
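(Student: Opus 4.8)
The plan is to obtain Corollary~\ref{LSC-bounded-MPE} as a formal consequence of Theorem~\ref{LSC-bounded-m-MPE}: in each of the two cases it suffices to verify that \emph{every} $T$-invariant Borel probability measure on $X$ has finite maximal pattern entropy, after which Theorem~\ref{LSC-bounded-m-MPE} yields the logarithmic Sarnak conjecture for $(X,T)$ at once.

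First assume $(X,T)$ has finite maximal pattern entropy. By Theorem~2.6 of \cite{HY09}, restated as Theorem~\ref{MPE-p}~(6) in Subsection~\ref{subsect-mpe}, the maximal pattern entropy of any $\mu\in\mathcal{M}(X,T)$ is at most the maximal pattern entropy of the system $(X,T)$; the latter is finite by hypothesis, so every invariant measure has finite maximal pattern entropy and Theorem~\ref{LSC-bounded-m-MPE} applies.

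Now assume $(X,T)$ is bounded tame, so that there is some $K\ge 2$ for which $(X,T)$ has no $K$-IT-tuple. By Corollary~1.5 of \cite{LXZ}, every $\mu\in\mathcal{M}(X,T)$ then has finite maximal pattern entropy, and again Theorem~\ref{LSC-bounded-m-MPE} gives the conclusion.

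There is no genuine obstacle here: all of the analytic content is already packaged in Theorem~\ref{LSC-bounded-m-MPE} (and ultimately in Theorem~\ref{thm-A}), and the only thing to check is that the hypotheses of Corollary~\ref{LSC-bounded-MPE} feed into it, which is precisely what Theorem~\ref{MPE-p}~(6) and Corollary~1.5 of \cite{LXZ} provide. We note in passing that the zero-entropy hypothesis appearing in the statement of the logarithmic Sarnak conjecture need not be checked separately: since measure-theoretic entropy is dominated by measure-theoretic maximal pattern entropy, finiteness of the latter for every invariant measure forces $h_\mu(T)=0$ for all $\mu\in\mathcal{M}(X,T)$, whence $\htop(X,T)=0$ by the variational principle.
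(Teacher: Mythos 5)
Your proposal is correct and follows essentially the same route as the paper: reduce both cases to Theorem~\ref{LSC-bounded-m-MPE} via the inequality $h^{*}_{\mu}(T)\le h^{*}_{\mathrm{top}}(T)$ from \cite{HY09} for the finite maximal pattern entropy case and via Corollary~1.5 of \cite{LXZ} for the bounded tame case. One tiny bookkeeping point: the inequality you cite as Theorem~\ref{MPE-p}~(6) actually appears as item~(5) of Theorem~\ref{MPE-p} as stated in Subsection~\ref{subsect-mpe} (the paper's own cross-reference to ``(6)'' is a slip), and your closing observation that finiteness of $h^{*}_{\mu}(T)$ forces $h_{\mu}(T)=0$ is exactly Theorem~\ref{MPE-p}~(4) combined with the variational principle, as in Proposition~\ref{lem-mb-mpe-zc}.
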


The structure of the paper is as follows. In Section \ref{sec-pre}, we recall some basic notions and results. In Section \ref{sec-dis}, we prove a disjointness result, which will be used in the proof of our main theorem. In Section \ref{sec-proof-thm-a}, we prove Theorem \ref{thm-A}. In Section~\ref{sec-example} we exhibit several systems whose spectrum is almost countable.
Our principal aim, however, is to deduce Theorems~\ref{LSC-group-extension}, \ref{LSC-susp} and \ref{LSC-bounded-m-MPE} from Theorem~\ref{thm-A}.

	\section{Preliminaries}\label{sec-pre}		
Throughout this paper, we denote by $\N$ the set of natural numbers. Denote by $e(t)=e^{2\pi it}$ for $\mathbb{R}$. Firstly we review some basic notions and fundamental properties of dynamical systems.

\subsection{Disintegration of Borel probability measures}\label{subs-Disin}
Let $X$ be a dense Borel subset of a compact metric space, endowed with a probability measure defined on the restriction of the Borel $\sigma$-algebra $\mathcal{X}$ to $X$.
The resulting probability space $(X,\mathcal{X},\mu)$ is called a \emph{Borel probability space}.

For a Borel probability space $(X,\mathcal{X},\mu)$ and a sub-$\sigma$-algebra $\mathcal{C}\subseteq\mathcal{X}$, it is well known (see, for example, \cite[Theorem~5.8]{F81} or \cite[Theorem~5.14]{EW11}) that $\mu$ can be disintegrated over $\mathcal{C}$ as
\(
\mu=\int_{X}\mu^{\mathcal{C}}_{x}\,d\mu(x),
\)
referred to as \emph{conditional measures}, with the following properties:
\begin{enumerate}
\item $\mu^{\mathcal{C}}_{x}$ is a probability measure on $(X,\mathcal{X})$ satisfying
\begin{equation*}
\mathbb{E}_{\mu}(f\mid\mathcal{C})(x)=\int_{X}f\,d\mu^{\mathcal{C}}_{x}
\quad\text{for $\mu$-a.e.\ }x\in X
\end{equation*}
for every $f\in L^{1}(X,\mathcal{X},\mu)$, where $\mathbb{E}_{\mu}(f\mid\mathcal{C})$ denotes the conditional expectation of $f$ given $\mathcal{C}$ with respect to $\mu$.

In other words, for every $f\in L^{1}(X,\mathcal{X},\mu)$ the integral $\int f(y)\,d\mu^{\mathcal{C}}_{x}(y)$ exists for all $x$ in a $\mathcal{C}$-conull set, the map
\(
x\mapsto\int f(y)\,d\mu^{\mathcal{C}}_{x}(y)
\)
is $\mathcal{C}$-measurable on this set, and
\[
\int_{A}\!\Bigl(\int f(y)\,d\mu^{\mathcal{C}}_{x}(y)\Bigr)\,d\mu(x)
=\int_{A}f\,d\mu
\quad\text{for all }A\in\mathcal{C}.
\]

\item For $\mu$-a.e.\ $x\in X$,
$\mu^{\mathcal{C}}_{x}=\mu^{\mathcal{C}}_{x'}$
for $\mu^{\mathcal{C}}_{x}$-a.e. $x'\in X$.
\end{enumerate}
Meanwhile, for a Borel probability space $(X,\mathcal{X},\mu)$ and a sub-$\sigma$-algebra $\mathcal{C}\subseteq\mathcal{X}$, we can always find a Borel probability space $(Y,\mathcal{Y},\nu)$ and a measurable map $$\pi\colon(X,\mathcal{X})\to(Y,\mathcal{Y})$$ such that $\pi_{*}\mu=\nu$ and $\mathcal{C}$ coincides with $\pi^{-1}(\mathcal{Y})$ modulo $\mu$-null sets (see, e.g., Lemma~5.17 and Corollary~5.22 in \cite{EW11}).

Next, let $\pi\colon(X,\mathcal{X},\mu)\to(Y,\mathcal{Y},\nu)$ be a measurable map between two Borel probability spaces satisfying $\pi_{*}\mu=\nu$.
Then $\pi^{-1}(\mathcal{Y})$ is a sub-$\sigma$-algebra of $\mathcal{X}$, and there exists a natural disintegration
\(
\mu=\int_{Y}\mu_{y}\,d\nu(y),
\)
called the \emph{disintegration of $\mu$ over $\pi$}, such that
\(\mu_{\pi(x)}=\mu^{\pi^{-1}(\mathcal{Y})}_{x}\) for $\mu$-a.e. $x\in X$.
We shall freely use whichever of the two representations of measure disintegration (over a sub-$\sigma$-algebra or over a factor map) is more convenient in the given context.

\subsection{Measure preserving systems}
Let $X$ be a compact metric space and let $\mathcal{M}(X)$ denote the set of all Borel probability measures on $X$.
For a TDS $(X,T)$, recall that $\mathcal{M}(X,T)$ and $\mathcal{M}^{e}(X,T)$ for the sets of all $T$-invariant Borel probability measures and all ergodic measures of $(X,T)$, respectively.
For $\mu\in\mathcal{M}^{e}(X,T)$, let
\[
\text{Gen}(\mu):=\Bigl\{x\in X:
\lim_{N\to\infty}\frac{1}{N}\sum_{i=0}^{N-1}\delta_{T^{i}x}=\mu
\text{ in the weak$^{*}$-topology}\Bigr\}.
\]
Clearly $\text{Gen}(\mu)\in\mathcal{X}$, and Birkhoff's ergodic theorem gives $\mu\bigl(\text{Gen}(\mu)\bigr)=1$.

For a given $\mu\in\mathcal{M}^{e}(X,T)$, there exist a Borel probability space $(\Omega,\mathcal{O},\xi)$ and a measurable map $\omega\mapsto\mu_{\omega}$ from $\Omega$ to $\mathcal{M}^{e}(X,T)$ such that
\[
\int_{X}f\,d\mu=\int_{\Omega}\!\Bigl(\int_{X}f\,d\mu_{\omega}\Bigr)\,d\xi(\omega)
\qquad\text{for every }f\in L^{1}(X,\mathcal{X},\mu).
\]

We rewrite this as $\mu=\int_{\Omega}\mu_{\omega}\,d\xi(\omega)$ and call it the \emph{ergodic decomposition of $\mu$}.
In fact, letting $\mathcal{I}_{\mu}(T)=\{B\in\mathcal{X}:T^{-1}B=B\}$, the ergodic decomposition of $\mu$ is exactly the disintegration of $\mu$ over $\mathcal{I}_{\mu}(T)$ (cf.\ Subsection~\ref{subs-Disin} with $\mathcal{C}=\mathcal{I}_{\mu}(T)$). We call the systems $(X,\mathcal{X},\mu_{\omega},T)$, $\omega\in \Omega$, \emph{the ergodic components} of $(X,\mathcal{X},\mu,T)$.

We remark that $\mathcal{M}^{e}(X,T)$ is a $G_{\delta}$-subset of the compact metric space $\mathcal{M}(X,T)$, hence it is a Borel subset of $\mathcal{M}(X,T)$.
Consequently we may take $\Omega=\mathcal{M}^{e}(X,T)$, $\mathcal{O}$ to be the Borel $\sigma$-algebra $\mathcal{B}_{\mathcal{M}^{e}(X,T)}$ restricted to $\mathcal{M}^{e}(X,T)$, and $\xi=\tau$ to be a probability measure on $(\mathcal{M}^{e}(X,T),\mathcal{B}_{\mathcal{M}^{e}(X,T)})$.
Thus the ergodic decomposition of $\mu$ can also be written as
\(\mu=\int_{\mathcal{M}^{e}(X,T)}m\,d\tau(m)\).

Throughout this paper, a \emph{measure preserving system}, or simply \emph{a system}, is a quadruple $(X,\mathcal{X},\mu,T)$, where $(X,T)$ is a topological dynamical system, $\mathcal{X}$ is the Borel $\sigma$-algebra of $X$, $\mu\in\mathcal{M}(X,T)$. A \emph{factor map}  from a system $(X,\mathcal{X},\mu,T)$ to a system $(Y,\mathcal{Y},\nu,S)$ is a measurable map $\pi\colon X\to Y$, such that
\(\pi_{*}\mu:=\mu\circ\pi^{-1}=\nu\)
and with $S\circ\pi(x)=\pi\circ T(x)$ valid $\mu$-almost
everywhere.
When such a map exists we say that $(Y,\mathcal{Y},\nu,S)$ is a \emph{factor} of $(X,\mathcal{X},\mu,T)$.
If the factor map $\pi$ is invertible (i.e., there exists a factor map $Y\rightarrow X$, written $\pi^{-1}$, with $\pi^{-1}\circ \pi=id_X$ valid
$\mu$-almost everywhere), then the two systems are said to be \emph{measure-theoretically isomorphic}, and we write
\((X,\mathcal{X},\mu,T)\cong(Y,\mathcal{Y},\nu,S)\).

Let $(Y,\mathcal{Y},\nu,S)$ be a factor of $(X,\mathcal{X},\mu,T)$.
A factor is characterised (modulo isomorphism) by $\pi^{-1}(\mathcal{Y})$, which is a $T$-invariant sub-$\sigma$-algebra of $\mathcal{X}$; conversely, every $T$-invariant sub-$\sigma$-algebra of $\mathcal{X}$ defines a factor (see for example, \cite[Theorem 5.15]{F81} or \cite[Theorem 6.5]{EW11}).
By a classical abuse of notation we identify the $\sigma$-algebra $\mathcal{Y}$ with its inverse image $\pi^{-1}(\mathcal{Y})$; in other words, we regard $\mathcal{Y}$ as a sub-$\sigma$-algebra of $\mathcal{X}$.
Consequently $L^{2}(Y,\mathcal{Y},\nu)$ is viewed as a closed subspace of $L^{2}(X,\mathcal{X},\mu)$.

\begin{de}\label{de-fi-one} Let $\pi: (X,\mathcal{X},\mu,T)\rightarrow (Y,\mathcal{Y},\nu,S)$ be a factor map between two measure preserving systems
and $\mu=\int_{Y}\mu_yd\nu(y)$ be the disintegration of $\mu$ over $\pi$. We say the factor map $\pi$ is
\emph{almost everywhere finite-to-one} if $\mu_y$ support on  a finite set for $\nu$-a.e. $y\in Y$.
\end{de}

Next, we introduce the notion of disjointness for two measure-preserving systems, first formulated by Furstenberg \cite{F}.
Let $(X,\mathcal{X},\mu,T)$ and $(Y,\mathcal{Y},\nu,R)$ be two systems.
A \emph{joining} of these systems is a $(T\!\times\!R)$-invariant probability measure $\lambda$ on the Cartesian product $(X\!\times\!Y,\mathcal{X}\!\times\!\mathcal{Y})$ whose marginals are $\mu$ and $\nu$, respectively.
The two systems are said to be \emph{disjoint} if the product measure $\mu\!\times\!\nu$ is the only joining.
Furstenberg \cite{F} proved that a measure-preserving system has zero entropy if and only if it is disjoint from every Bernoulli system.
Here a \emph{Bernoulli system} has the form $(Y^{\mathbb{Z}},\mathcal{Y}^{\mathbb{Z}},\nu^{\mathbb{Z}},\sigma)$, where $(Y,\mathcal{Y},\nu)$ is a Borel probability space, $\sigma$ is the left shift on $Y^{\mathbb{Z}}$, $\mathcal{Y}^{\mathbb{Z}}$ is the product $\sigma$-algebra, and $\nu^{\mathbb{Z}}$ is the product measure.

\subsection{Maximal pattern entropy}\label{subsect-mpe} In this subsection we review sequence entropy \cite{G,Ku} and the maximal pattern entropy \cite{HY09}.
Let $X$ be a compact metric space and let $\mathcal{X}$ denote Borel $\sigma$-algebra, that is, the collection of all Borel subsets of $X$; measurability will always refer to $\mathcal{X}$. In this article a \emph{cover} of $X$ is a finite family of Borel subsets whose union is $X$, and a \emph{partition} of $X$ is a cover whose elements are pairwise disjoint. We write  $\mathcal{P}_X$ for the set of all partitions of $X$, $\mathcal{C}_X$ for the set of all covers of $X$,
$\mathcal{C}_X^o$ for the set of all open covers of $X$. Given two covers $\mathcal{U},\mathcal{V}\in\mathcal{C}_X$, we say $\mathcal{U}$ is \emph{finer} than $\mathcal{V}$ (write $\mathcal{U}\succeq\mathcal{V}$) if each element of $\mathcal{U}$ is contained in some element of $\mathcal{V}$.
Define $\mathcal{U}\vee\mathcal{V}=\{U\cap V:U\in\mathcal{U},\,V\in\mathcal{V}\}$.

Let $(X,T)$ be a topological dynamical system. Denote by $\mathcal{S}$ the set of all strictly increasing sequences of $\mathbb{Z}_+:=\{0\}\cup \mathbb{N}$.
For  $\mathcal{A}=(n_{i})_{i=1}^{\infty}\in \mathcal{S}$ and $\mathcal{U}\in\mathcal{C}^{o}_{X}$, the \emph{topological sequence entropy of $\mathcal{U}$} with respect to $(X,T)$ along $\mathcal{A}$ is defined by
\[
h_{\mathrm{top}}^{\mathcal{A}}(T,\mathcal{U})
=\limsup_{N\to\infty}\frac{1}{N}\log N\!\left(\bigvee_{i=1}^{N}T^{-n_{i}}\mathcal{U}\right),
\]
where $N\!\left(\bigvee_{i=1}^{N}T^{-n_{i}}\mathcal{U}\right)$ denotes the minimal cardinality of a sub-cover of $\bigvee_{i=1}^{N}T^{-n_{i}}\mathcal{U}$.
The \emph{topological sequence entropy} of $(X,T)$ along $\mathcal{A}$ is then
\[
h_{\mathrm{top}}^{\mathcal{A}}(X,T)
=\sup_{\mathcal{U}\in\mathcal{C}^{o}_{X}}h_{\mathrm{top}}^{\mathcal{A}}(T,\mathcal{U}).
\]
If $\mathcal{A}=\mathbb{N}$ we recover standard topological entropy of $(X,T)$, which we denote simply by $h_{\mathrm{top}}(X,T)$.

For $\mu\in\mathcal{M}(X)$, let $\mathcal{P}^{\mu}_{X}$ denote the set of all finite measurable partitions of $X$.
Given a partition $\alpha\in\mathcal{P}^{\mu}_{X}$ and a sub-$\sigma$-algebra $\mathcal{D}\subseteq\mathcal{X}$, set
\[
H_{\mu}(\alpha\mid\mathcal{D})
=\sum_{A\in\alpha}\int_{X}-\mathbb{E}_{\mu}(1_{A}\mid\mathcal{D})\log\mathbb{E}_{\mu}(1_{A}|\mathcal{D})\,d\mu.
\]
It is standard that $H_{\mu}(\alpha| \mathcal{D})$ increases with $\alpha$ and decreases with $\mathcal{D}$.
Writing $\mathcal{N}=\{\emptyset,X\}$, we define
\(H_{\mu}(\alpha)=H_{\mu}(\alpha|\mathcal{N})
=-\sum_{A\in\alpha}\mu(A)\log\mu(A)\).

Now let $(X,\mathcal{X},\mu,T)$ be a measure-preserving system and let $\mathcal{A}=(n_{i})_{i=1}^{\infty}\in \mathcal{S}$.
The \emph{sequence entropy of a partition $\alpha\in\mathcal{P}^{\mu}_{X}$} with respect to $(X,\mathcal{X},\mu,T)$ along $\mathcal{A}$ is
\[
h_{\mu}^{\mathcal{A}}(T,\alpha)
=\limsup_{N\to\infty}\frac{1}{N}H_{\mu}\!\left(\bigvee_{i=1}^{N}T^{-n_{i}}\alpha\right),
\]
and \emph{the sequence entropy of the system along $\mathcal{A}$} is
$h_{\mu}^{\mathcal{A}}(T)
=\sup_{\alpha\in\mathcal{P}^{\mu}_{X}}h_{\mu}^{\mathcal{A}}(T,\alpha)$.
As in
the topological case, when $\mathcal{A}=\mathbb{N}$, $h_{\mu}^{\mathbb{N}}(T)$ coincides with the usual \emph{measure-theoretic entropy} $h_{\mu}(T)$.
For the classical
theory of measure-theoretical entropy and classical theory of topological entropy can found in \cite{DGS,G03,P04,W82}.

In \cite{HY09}, Huang and Ye introduced the notion of maximal pattern entropy. For a TDS $(X,T)$, $n\in\mathbb N$, and $\mathcal{U}\in \mathcal{C}_X^o$, set
\[
p_{X,\mathcal U}^*(n)=\max_{(t_1<t_2<\dots<t_n)\in\mathbb Z_+^n}
N\!\biggl(\bigvee_{i=1}^n T^{-t_i}\mathcal U\biggr).
\]
\emph{The maximal pattern entropy} of $T$ with respect to $\mathcal U$ is defined by
\[
h_{\text{top}}^*(T,\mathcal U)=\limsup_{n\to+\infty}\frac1n\log p_{X,\mathcal U}^*(n).
\]
It is easy to see that $\{\log p_{X,\mathcal U}^*(n)\}_{n=1}^\infty$ is a sub-additive sequence; hence
\[
h_{\text{top}}^*(T,\mathcal U)=\lim_{n\to+\infty}\frac1n\log p_{X,\mathcal U}^*(n)
=\inf_{n\ge 1}\frac1n\log p_{X,\mathcal U}^*(n).
\]
\emph{The maximal pattern entropy} of $(X,T)$ is
$h_{\text{top}}^*(T)=\sup_{\mathcal U}h_{\text{top}}^*(T,\mathcal U)$,
where the supremum is taken over all finite open covers of $X$.

Analogously, given a system\ $(X,\mathcal{X},\mu,T)$ and $\alpha\in\mathcal{P}^{\mu}_{X}$, we define
\[
p_{X,\alpha,\mu}^*(n)=\max_{(t_1<t_2<\dots<t_n)\in\mathbb Z_+^n}\sum_{A\in\bigvee_{i=1}^n T^{-t_i}\alpha}-\nu(A)\log\nu(A)
\]
and \emph{the maximal pattern entropy of $T$ with respect to $\alpha$} by
\[
h_\mu^*(T,\alpha)=\limsup_{n\to+\infty}\frac1n p_{X,\alpha,\mu}^*(n).
\]
It is easy to see that $\{p_{X,\alpha,\mu}^*(n)\}_{n=1}^\infty$ is a sub-additive sequence; hence
\[
h_\mu^*(T,\alpha)=\lim_{n\to+\infty}\frac1n p_{Y,\alpha,\nu}(n)
=\inf_{n\ge 1}\frac1n p_{X,\alpha,\mu}^*(n).
\]
\emph{The maximal pattern entropy of $T$} is
$h_\mu^*(T)=\sup_\alpha h_\mu^*(T,\alpha)$,
where the supremum is taken over all finite measurable partitions.

The following result is taken from Theorems 2.2, 2.3, 2.6, 4.4 and Corollary 3.9 in \cite{HY09}.
\begin{thm}\label{MPE-p}
Let $(X,T)$ be a TDS and $\mu\in \mathcal{M}(X,T)$.
\begin{enumerate}
\item[\rm(1)]
For every $\mathcal{U}\in \mathcal{C}_X^o$, $h_{\mathrm{top}}^{*}(T,\mathcal{U})=\sup_{\mathcal{A}\in \mathcal{S}}h_{\mathrm{top}}^{\mathcal{A}}(T,\mathcal{U})$,
and there exists $\mathcal{A}\in\mathcal{S}$ such that
$h_{\mathrm{top}}^{*}(T,\mathcal{U})=h_{\mathrm{top}}^{\mathcal{A}}(T,\mathcal{U})$.
Moreover,
$h_{\mathrm{top}}^{*}(T)=\sup_{\mathcal{A}\in \mathcal{S}}h_{\mathrm{top}}^{\mathcal{A}}(T)$
and $h_{\mathrm{top}}^{*}(T)=+\infty$ or $\log k$ for some $k\in \mathbb{N}$.

\item[\rm(2)]
For every $\alpha\in \mathcal{P}_X^\mu$,
$h_{\mu}^{*}(T,\alpha)=\sup_{\mathcal{A}\in \mathcal{S}}h_{\mu}^{\mathcal{A}}(T,\alpha)$,
and there exists $\mathcal{A}\in\mathcal{S}$ such that
$h_{\mu}^{*}(T,\alpha)=h_{\mu}^{\mathcal{A}}(T,\alpha)$.
Moreover, $h_{\mu}^{*}(T)=\sup_{\mathcal{A}\in \mathcal{S}}h_{\mu}^{\mathcal{A}}(T)$
and if in additionally, $\mu$ is ergodic, then
$h_{\mu}^{*}(T)=+\infty$ or $\log k$ for some $k\in \mathbb{N}$.

\item[\rm(3)] if $h_{top}(T)>0$ then $h_{top}^*(T)=+\infty$.

\item[\rm(4)] if $h_{\mu}(T)>0$ then $h_{\mu}^*(T)=+\infty$.

\item[\rm(5)] for every $\mathcal{A}\in\mathcal{S}$,
\(h^{\mathcal{A}}_{\mu}(T) \le h^{\mathcal{A}}_{top}(T)\).
Moreover,
$h^{*}_{\mu}(T) \le h^{*}_{\mathrm{top}}(T)$.
\end{enumerate}
\end{thm}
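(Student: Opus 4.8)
The statements gathered in Theorem~\ref{MPE-p} are those of \cite{HY09}; for completeness we outline how one reconstructs them, handling the elementary pieces first. The sub-additivity claims are immediate: given an optimal $(n+m)$-tuple, split it into its first $n$ and last $m$ entries and use $N(\mathcal{U}\vee\mathcal{V})\le N(\mathcal{U})\,N(\mathcal{V})$ together with sub-additivity of $H_{\mu}$ under joins; this gives $p^{*}_{X,\mathcal{U}}(n+m)\le p^{*}_{X,\mathcal{U}}(n)\,p^{*}_{X,\mathcal{U}}(m)$ and $p^{*}_{X,\alpha,\mu}(n+m)\le p^{*}_{X,\alpha,\mu}(n)+p^{*}_{X,\alpha,\mu}(m)$, so Fekete's lemma yields the stated $\lim=\inf$ formulas. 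For $h^{*}_{\mathrm{top}}(T,\mathcal{U})=\sup_{\mathcal{A}\in\mathcal{S}}h^{\mathcal{A}}_{\mathrm{top}}(T,\mathcal{U})$ the inequality ``$\ge$'' follows from $N\bigl(\bigvee_{i=1}^{N}T^{-n_{i}}\mathcal{U}\bigr)\le p^{*}_{X,\mathcal{U}}(N)$; for ``$\le$'', and simultaneously for the existence of a maximizing $\mathcal{A}$, the plan is to concatenate blocks: for each $j$ choose a $b_{j}$-tuple $F_{j}$ attaining $p^{*}_{X,\mathcal{U}}(b_{j})$, translate it to lie entirely to the right of $F_{1},\dots,F_{j-1}$ (permissible since $T^{-s}$ is a homeomorphism, so $N(T^{-s}\mathcal{V})=N(\mathcal{V})$), and let $\mathcal{A}$ enumerate $\bigcup_{j}F_{j}$; choosing the $b_{j}$ so that $b_{j}/(b_{1}+\dots+b_{j})\to 1$ and using monotonicity of $N$ under refinement, one checks that along the subsequence $N=b_{1}+\dots+b_{j}$ the quantity $\frac1N\log N\bigl(\bigvee_{i=1}^{N}T^{-n_{i}}\mathcal{U}\bigr)$ converges to $h^{*}_{\mathrm{top}}(T,\mathcal{U})$, whence the $\limsup$ equals $h^{*}_{\mathrm{top}}(T,\mathcal{U})$. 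Taking the supremum over $\mathcal{U}$ and interchanging the two suprema gives $h^{*}_{\mathrm{top}}(T)=\sup_{\mathcal{A}}h^{\mathcal{A}}_{\mathrm{top}}(T)$, and the same scheme with $H_{\mu}$ in place of $\log N$ yields the corresponding identities in (2).

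For (5), the inequality $h^{\mathcal{A}}_{\mu}(T)\le h^{\mathcal{A}}_{\mathrm{top}}(T)$ is one half of a variational principle: the classical proof of $h_{\mu}(T)\le h_{\mathrm{top}}(T)$ (compact approximation of a partition, conditional-entropy estimate, passage to an open cover) never uses that the times $0,1,\dots,N-1$ are consecutive, only that they form a finite set, so it carries over verbatim to $\bigvee_{i=1}^{N}T^{-n_{i}}$; then $h^{*}_{\mu}(T)=\sup_{\mathcal{A}}h^{\mathcal{A}}_{\mu}(T)\le\sup_{\mathcal{A}}h^{\mathcal{A}}_{\mathrm{top}}(T)=h^{*}_{\mathrm{top}}(T)$ by (1) and (2). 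For (3) and (4): if $h_{\mathrm{top}}(T)>0$ then $h_{\mathrm{top}}(T^{\ell})=\ell\,h_{\mathrm{top}}(T)\to\infty$ as $\ell\to\infty$, so for each $k$ there exist $\ell$ and an open cover $\mathcal{U}$ with $h_{\mathrm{top}}(T^{\ell},\mathcal{U})>\log k$; the arithmetic sequence $\mathcal{A}=(0,\ell,2\ell,\dots)$ satisfies $h^{\mathcal{A}}_{\mathrm{top}}(T,\mathcal{U})=h_{\mathrm{top}}(T^{\ell},\mathcal{U})>\log k$, hence $h^{*}_{\mathrm{top}}(T)>\log k$; letting $k\to\infty$ gives $h^{*}_{\mathrm{top}}(T)=+\infty$. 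Replacing the cover by a partition with large $h_{\mu}(T^{\ell},\cdot)$ gives (4).

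The remaining assertion---that $h^{*}_{\mathrm{top}}(T)$, and $h^{*}_{\mu}(T)$ for ergodic $\mu$, equals $\log k$ for some $k\in\mathbb N$ or $+\infty$---is the genuinely hard point, and I expect it to be the main obstacle. The plan is to use the theory of sequence-entropy tuples. Call $(x_{1},\dots,x_{n})\in X^{n}$, not all coordinates equal, a \emph{sequence-entropy $n$-tuple} if every finite open cover separating the $x_{i}$ has positive maximal pattern entropy; the technical heart is the equivalent reformulation that there exist an infinite $S\subset\mathbb Z_{+}$ and pairwise disjoint neighbourhoods $V_{i}\ni x_{i}$ with $\bigcap_{t\in F}T^{-t}V_{\sigma(t)}\ne\emptyset$ for every finite $F\subset S$ and every $\sigma\colon F\to\{1,\dots,n\}$. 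One then proves: (i) the set $E_{n}$ of sequence-entropy $n$-tuples is closed and $(T\times\dots\times T)$-invariant; (ii) if $E_{n}\ne\emptyset$ then $h^{*}_{\mathrm{top}}(T)\ge\log n$, because along $S$ the stated independence forces $p^{*}_{X,\mathcal{U}}(m)\ge n^{m}$ for a cover refining $\{X\setminus V_{i}\}_{i}$; (iii) conversely, $h^{*}_{\mathrm{top}}(T)>\log(n-1)$ forces $E_{n}\ne\emptyset$; (iv) if $E_{n}\ne\emptyset$ for all $n$ then $h^{*}_{\mathrm{top}}(T)=+\infty$, and otherwise $h^{*}_{\mathrm{top}}(T)=\log k$ with $k=\max\{n:E_{n}\ne\emptyset\}$. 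The obstacle is step (iii): converting ``$p^{*}_{X,\mathcal{U}}(m)$ grows faster than $(n-1)^{m}$'' into an \emph{infinite} independence set of size $n$ requires a Sauer--Shelah-type combinatorial dichotomy adapted to the dynamical covering numbers---one ruling out intermediate exponential growth rates---after which a diagonal/compactness argument delivers the $n$ limit points forming an element of $E_{n}$. For the ergodic measure-theoretic statement one runs the parallel argument with a measurable generating partition, conditional-entropy computations in the style of the Ku\v{s}nirenko formula, and ``$\mu$-sequence-entropy $n$-tuples''; the base case $k=1$ is exactly the classical equivalence, recalled as item~(iii) of the introduction, that $h^{*}_{\mu}(T)=0$ if and only if $(X,\mathcal{X},\mu,T)$ has discrete spectrum. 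Ergodicity is needed in (2) because a non-ergodic average of ergodic components with distinct values $\log k_{i}$ need not itself be the logarithm of an integer.
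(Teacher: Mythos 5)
The paper does not prove Theorem~\ref{MPE-p} at all: it is quoted verbatim from Theorems~2.2, 2.3, 2.6, 4.4 and Corollary~3.9 of \cite{HY09}, so there is no in-paper argument to compare against. Measured against the cited source, your reconstruction follows essentially the same route. The routine parts are correct as you present them: the sub-additivity of $n\mapsto\log p^{*}_{X,\mathcal U}(n)$ and of $n\mapsto p^{*}_{X,\alpha,\mu}(n)$ via splitting an optimal tuple; the block-concatenation construction of a single maximizing sequence $\mathcal A$ (the key points being $N(T^{-s}\mathcal V)=N(\mathcal V)$, monotonicity under refinement, $b_j/(b_1+\cdots+b_j)\to1$, and the uniform upper bound $N(\bigvee_{i\le N}T^{-n_i}\mathcal U)\le p^{*}_{X,\mathcal U}(N)$ which controls the $\limsup$ off the chosen subsequence); the reduction of (3) and (4) to $h_{\mathrm{top}}(T^{\ell})=\ell\,h_{\mathrm{top}}(T)$ and $h_\mu(T^\ell)=\ell h_\mu(T)$ via arithmetic sequences; and (5) via Goodman's half of the variational principle for sequence entropy, whose proof indeed only uses that the iterates form a finite set.

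The one place where you have an outline rather than a proof is the quantization $h^{*}_{\mathrm{top}}(T)\in\{\log k:k\in\mathbb N\}\cup\{+\infty\}$ (and its ergodic measure-theoretic analogue), and you say so yourself. Your step (ii) is fine provided you really do pass to the refined cover $\{X\setminus\bigcup_iV_i\}\cup\{W_1,\dots,W_n\}$ with $W_i\supset V_i$ pairwise disjoint, so that each element of the join meets points of at most one combinatorial type and $N(\bigvee_{t\in F}T^{-t}\mathcal U)\ge n^{|F|}$; the naive cover $\{X\setminus V_i\}_i$ alone yields only $(n/(n-1))^{|F|}$. The genuine content is your step (iii), the dichotomy converting growth of $p^{*}_{X,\mathcal U}(m)$ beyond $(n-1)^m$ into an infinite independence set for an $n$-tuple; this is exactly the combinatorial lemma (a Karpovsky--Milman/Sauer--Shelah generalization) that is the centerpiece of \cite{HY09}, and you have not supplied it, nor its conditional-entropy counterpart needed for the ergodic case of (2). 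So the proposal is a faithful map of the cited proof with the hardest lemma left as an acknowledged black box, not a self-contained argument.
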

By Theorem \ref{MPE-p}~(1) and (3), if a TDS $(X,T)$ has   finite
maximal pattern entropy, then $(X,T)$ has zero entropy and $h_{top}^*(T)=\log k$ for some $k\in \mathbb{N}$.

\subsection{Kronecker factor and Discrete spectrum}
Let $(X,\mathcal{X},\mu,T)$ be a system.
In the introduction we defined the spectrum $\operatorname{Spec}(X,\mathcal{X},\mu,T)$.
Recall that a complex number $\lambda$ is an \emph{eigenvalue} of $(X,\mathcal{X},T,\mu)$ if there exists a non-zero function $f\in L^{2}(X,\mathcal{X},\mu)$ such that the Koopman operator satisfies $U_{T}f=\lambda f$, where $U_Tf=f\circ T$; such an $f$ is called an \emph{eigenfunction} associated with $\lambda$.
Since $U_{T}$ is unitary, every eigenvalue satisfies $|\lambda|=1$, hence $\lambda\in\mathbb{T}$.
The \emph{spectrum} of $(X,\mathcal{X},T,\mu)$ is
\[
\operatorname{Spec}(X,\mathcal{X},\mu,T):=\{\lambda\in\mathbb{T}:\lambda\text{ is an eigenvalue of }(X,\mathcal{X},T,\mu)\}.
\]
Because $L^{2}(X,\mathcal{X},\mu)$ is separable and eigenfunctions corresponding to distinct eigenvalues are orthogonal, $\operatorname{Spec}(X,\mathcal{X},\mu,T)$ is at most countable.

Next we introduce the rational and irrational parts of the spectrum, referred to respectively as the \emph{rational spectrum} and the \emph{irrational spectrum}:
\[
\begin{aligned}
\operatorname{Spec}_{\mathrm{rat}}(X,\mathcal{X},\mu,T)
&=\{\lambda\in\operatorname{Spec}(X,\mathcal{X},\mu,T):\lambda=e(t)\text{ for some }t\in\mathbb{Q}\},\\[2mm]
\operatorname{Spec}_{\mathrm{irr}}(X,\mathcal{X},\mu,T)
&=\{\lambda\in\operatorname{Spec}(X,\mathcal{X},\mu,T):\lambda=e(t)\text{ for some }t\in\mathbb{R}\setminus\mathbb{Q}\}.
\end{aligned}
\]
We refer to eigenvalues in $\operatorname{Spec}_{\mathrm{rat}}(X,\mathcal{X},\mu,T)$ (resp.\ $\operatorname{Spec}_{\mathrm{irr}}(X,\mathcal{X},\mu,T)$) as \emph{rational eigenvalues} (resp.\ \emph{irrational eigenvalues}).
Clearly
\[
\operatorname{Spec}(X,\mathcal{X},\mu,T)
=\operatorname{Spec}_{\mathrm{rat}}(X,\mathcal{X},\mu,T)\sqcup\operatorname{Spec}_{\mathrm{irr}}(X,\mathcal{X},\mu,T),
\]
so both subsets are countable.
Writing $e(\mathbb{Q}):=\{e(t):t\in\mathbb{Q}\cap[0,1)\}$, we have
\begin{equation}\label{eq:rational-eQ}
\operatorname{Spec}_{\mathrm{rat}}(X,\mathcal{X},\mu,T)\subseteq e(\mathbb{Q})
\quad\text{and}\quad
\operatorname{Spec}_{\mathrm{irr}}(X,\mathcal{X},\mu,T)\subseteq\mathbb{T}\setminus e(\mathbb{Q}).
\end{equation}

We next define the Kronecker factor and the irrational Kronecker factor (sometimes also called the Kronecker algebra and the irrational Kronecker algebra).
To this end we need the following classical result (see, for example, \cite[Theorem 1.2]{Z76}).

\begin{prop}\label{zimmer}
Let $(X,\mathcal{X},\mu,T)$ be a measure-preserving system and let $H$ be a conjugation-invariant subalgebra of $L^{2}(X,\mathcal{X},\mu)$ consisting of bounded functions.
Then there exists a sub-$\sigma$-algebra $\mathcal{A}$ of $\mathcal{X}$ such that
\(
\overline{H}=L^{2}(X,\mathcal{A},\mu).
\)
Moreover, if $H$ is $U_{T}$-invariant, then $\mathcal{A}$ is $T$-invariant.
\end{prop}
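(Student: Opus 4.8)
The plan is to take $\mathcal{A}$ to be the sub-$\sigma$-algebra of $\mathcal{X}$ generated by $H$, i.e.\ the smallest sub-$\sigma$-algebra making every $f\in H$ measurable, and to establish $\overline{H}=L^{2}(X,\mathcal{A},\mu)$ by two inclusions. (We may assume $1\in H$: this is the case in all our applications, and in general the asserted identity already forces $1\in\overline{H}$, after which one may replace $H$ by $H+\mathbb{C}1$ without altering $\mathcal{A}$ or $\overline{H}$.) Since $H$ is conjugation-invariant, writing $H_{\mathbb{R}}$ for its real-valued part we have $H=H_{\mathbb{R}}+iH_{\mathbb{R}}$, $H_{\mathbb{R}}$ is a unital real subalgebra of $L^{\infty}$, and $\sigma(H_{\mathbb{R}})=\sigma(H)=\mathcal{A}$. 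The inclusion $\overline{H}\subseteq L^{2}(X,\mathcal{A},\mu)$ is immediate, since each $f\in H$ is bounded and $\mathcal{A}$-measurable while $L^{2}(X,\mathcal{A},\mu)$ is a closed subspace of $L^{2}(X,\mathcal{X},\mu)$ (the range of the orthogonal projection $\mathbb{E}_{\mu}(\cdot\mid\mathcal{A})$).

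For the reverse inclusion the key step — and the main obstacle — is to show $1_{A}\in\overline{H_{\mathbb{R}}}$ for every $A$ in a $\pi$-system generating $\mathcal{A}$; the difficulty is that the $L^{2}$-closure of an algebra of bounded functions need not itself be an algebra, so one cannot simply assert that $\overline{H}$ is stable under products (truncation is not a polynomial operation). To circumvent this I would first prove: for $f_{1},\dots,f_{k}\in H_{\mathbb{R}}$ bounded by $M$ and any continuous $\Phi\colon[-M,M]^{k}\to\mathbb{R}$, one has $\Phi(f_{1},\dots,f_{k})\in\overline{H_{\mathbb{R}}}$. Indeed, by the Stone--Weierstrass theorem $\Phi$ is a uniform limit of polynomials $P_{n}$ on $[-M,M]^{k}$; each $P_{n}(f_{1},\dots,f_{k})$ lies in $H_{\mathbb{R}}$ because $H_{\mathbb{R}}$ is a unital algebra, and uniform convergence implies $L^{2}$ convergence since $\mu$ is a probability measure.

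Applying this with $\Phi$ a continuous function valued in $[0,1]$ approximating the indicator of a box $(-\infty,c_{1}]\times\dots\times(-\infty,c_{k}]$, and then letting the approximation tighten, dominated convergence yields $1_{\{f_{1}\le c_{1},\dots,f_{k}\le c_{k}\}}\in\overline{H_{\mathbb{R}}}$. The family $\mathcal{R}$ of all such finite intersections (together with $X$) is a $\pi$-system with $\sigma(\mathcal{R})=\mathcal{A}$. Next I would verify that $\mathcal{D}:=\{A\in\mathcal{X}:1_{A}\in\overline{H_{\mathbb{R}}}\}$ is a Dynkin system: it contains $X$; if $A\subseteq B$ with $A,B\in\mathcal{D}$ then $1_{B\setminus A}=1_{B}-1_{A}\in\overline{H_{\mathbb{R}}}$; and if $A_{n}\uparrow A$ with $A_{n}\in\mathcal{D}$ then $1_{A_{n}}\to 1_{A}$ in $L^{2}$, so $1_{A}\in\overline{H_{\mathbb{R}}}$. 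Since $\mathcal{D}\supseteq\mathcal{R}$, the $\pi$--$\lambda$ theorem gives $\mathcal{D}\supseteq\sigma(\mathcal{R})=\mathcal{A}$. Hence $1_{A}\in\overline{H_{\mathbb{R}}}\subseteq\overline{H}$ for every $A\in\mathcal{A}$; as complex simple $\mathcal{A}$-measurable functions are dense in $L^{2}(X,\mathcal{A},\mu)$ and $\overline{H}$ is a closed complex subspace, $L^{2}(X,\mathcal{A},\mu)\subseteq\overline{H}$, which combined with the first inclusion gives $\overline{H}=L^{2}(X,\mathcal{A},\mu)$.

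Finally, for the invariance statement: if $H$ is $U_{T}$-invariant in the sense that $U_{T}H=H$, then $U_{T}\overline{H}=\overline{H}$; since the unitary $U_{T}$ carries $L^{2}(X,\mathcal{A},\mu)$ onto $L^{2}(X,T^{-1}\mathcal{A},\mu)$, we get $L^{2}(X,T^{-1}\mathcal{A},\mu)=L^{2}(X,\mathcal{A},\mu)$, and comparing indicator functions shows $T^{-1}\mathcal{A}=\mathcal{A}$ modulo $\mu$-null sets, i.e.\ $\mathcal{A}$ is $T$-invariant. I expect the Weierstrass-plus-Dynkin argument of the middle two paragraphs to be the crux; the remaining steps are routine.
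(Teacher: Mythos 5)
The paper does not prove this proposition at all: it is stated as ``the following classical result'' with a citation to \cite[Theorem~1.2]{Z76}, so there is no in-paper argument to compare yours against. On its own terms your proof is correct and complete. Taking $\mathcal{A}=\sigma(H)$ and arguing via Stone--Weierstrass plus the $\pi$--$\lambda$ theorem is the standard route, and you correctly isolate the one genuine subtlety, namely that the $L^{2}$-closure of an algebra of bounded functions need not be closed under products, which is exactly why one must pass through uniform polynomial approximation of continuous $\Phi$ on $[-M,M]^{k}$ before truncating to indicators. You are also right to flag the unit: as literally stated the proposition fails without $1\in\overline{H}$ (e.g.\ $H=\mathbb{C}\,\mathbf{1}_{A}$ with $0<\mu(A)<1$ is a conjugation-invariant subalgebra of bounded functions whose closure is no $L^{2}(X,\mathcal{A},\mu)$), and in the paper's only application $H$ contains the constant $1$ since it is an eigenfunction with eigenvalue $1$, so your reduction is harmless there. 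Two small remarks: the phrase ``the asserted identity already forces $1\in\overline{H}$'' is slightly circular as written --- better to say the hypothesis should be read as including $1\in H$, which holds in the application; and the $T$-invariance you obtain is $T^{-1}\mathcal{A}=\mathcal{A}$ modulo $\mu$-null sets, which is the relevant notion for defining a factor (and can be upgraded to strict invariance by replacing $\mathcal{A}$ with $\bigvee_{n\in\mathbb{Z}}T^{-n}\mathcal{A}$ without changing $L^{2}(X,\mathcal{A},\mu)$).
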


Let $(X,\mathcal{X},\mu,T)$ be a system.
We first note a simple fact: if $f\in L^{2}(X,\mathcal{X},\mu)$ is an eigenfunction with eigenvalue $\lambda\in\operatorname{Spec}(X,\mathcal{X},\mu,T)$, then for every sufficiently large $P>0$ the truncated function
$f^{(P)}(x)=f(x) \cdot \mathbf{1}_{\{|f| \leq P\}}(x)$
is also an eigenfunction with the same eigenvalue $\lambda$. Clearly, $\lim_{P\rightarrow +\infty} \|f-f^{(P)}\|_{L^2(\mu)}=0$.

Thus, If we let \( H \) be the set of the zero function together with all bounded eigenfunctions of \( (X,\mathcal{X},\mu,T) \), then \( H \) is a conjugation-invariant subalgebra of \( L^{2}(X,\mathcal{X},\mu) \) satisfying \( U_{T}(H)=H \), and its closure \( \overline{H} \) in \( L^{2}(X,\mathcal{X},\mu) \) contains every eigenfunction of \( (X,\mathcal{X},\mu,T) \).
By Proposition~\ref{zimmer}, there exists a \( T \)-invariant sub-\( \sigma \)-algebra \( \mathcal{K}(X,\mathcal{X},\mu,T) \) of \( \mathcal{X} \) such that
\(\overline{H}=L^{2}\!\bigl(X,\mathcal{K}(X,\mathcal{X},\mu,T),\mu\bigr)\).
We call this \( \sigma \)-algebra the \emph{Kronecker factor} (or \emph{Kronecker algebra}) of \( (X,\mathcal{X},\mu,T) \).
Similarly, \( \mathcal{K}_{\mathrm{rat}}(X,\mathcal{X},\mu,T) \) denotes the \emph{rational Kronecker factor} (or \emph{rational Kronecker algebra}), generated by all bounded eigenfunctions whose eigenvalues are rational.
\begin{de} A system \( (X,\mathcal{X},\mu,T) \) is called \emph{discrete spectrum} if \( L^{2}(X,\mathcal{X},\mu)\) is spanned by the eigenfunctions
equivalently, if $ \mathcal{K}(X,\mathcal{X},\mu,T)=\mathcal{X} \mod \mu$.
\end{de}
 The following result establishes the relation between maximal pattern entropy and the Kronecker factor by using Theorem~\ref{MPE-p}~(2), which was proved in \cite[Lemma~2.2 and Theorem~2.3]{HMY04}.
\begin{thm}\label{thm-se-enytopy}
Let $(X,\mathcal{X},\mu, T)$ be a system and $\alpha\in\mathcal{P}_X^\mu$. Then
\[
h^*_\mu(T,\alpha)=H_\mu(\alpha\mid\mathcal{K}(X,\mathcal{X},\mu,T)).
\]
\end{thm}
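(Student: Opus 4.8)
The plan is to prove the two inequalities $h^*_\mu(T,\alpha)\le H_\mu(\alpha\mid\mathcal{K})$ and $h^*_\mu(T,\alpha)\ge H_\mu(\alpha\mid\mathcal{K})$, writing $\mathcal{K}=\mathcal{K}(X,\mathcal{X},\mu,T)$ throughout; we may assume $\mu$ is ergodic, the general case being handled by running the same argument relative to the invariant $\sigma$-algebra. By part~(2) of Theorem~\ref{MPE-p} we have $h^*_\mu(T,\alpha)=\sup_{\mathcal{A}\in\mathcal{S}}h^{\mathcal{A}}_\mu(T,\alpha)$, so the first inequality amounts to $h^{\mathcal{A}}_\mu(T,\alpha)\le H_\mu(\alpha\mid\mathcal{K})$ for \emph{every} sequence $\mathcal{A}=(n_i)_{i\ge 1}$, while the second amounts to constructing one sequence $\mathcal{A}$ along which $\tfrac1N H_\mu\bigl(\bigvee_{i=1}^N T^{-n_i}\alpha\bigr)\ge H_\mu(\alpha\mid\mathcal{K})-o(1)$.

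\smallskip
\noindent\emph{Upper bound.} Fix $\epsilon>0$. Since the eigenfunctions span the separable space $L^2(X,\mathcal{K},\mu)$, the $\sigma$-algebra $\mathcal{K}$ is countably generated, so there is a finite partition $\xi\subseteq\mathcal{K}$ with $H_\mu(\alpha\mid\xi)\le H_\mu(\alpha\mid\mathcal{K})+\epsilon$. For any $(n_i)$,
\[
H_\mu\Bigl(\bigvee_{i=1}^N T^{-n_i}\alpha\Bigr)\le H_\mu\Bigl(\bigvee_{i=1}^N T^{-n_i}\xi\Bigr)+H_\mu\Bigl(\bigvee_{i=1}^N T^{-n_i}\alpha\;\Big|\;\bigvee_{i=1}^N T^{-n_i}\xi\Bigr),
\]
and by subadditivity of conditional entropy together with the $T$-invariance of $\mu$ the second term is at most $\sum_{k=1}^N H_\mu(T^{-n_k}\alpha\mid T^{-n_k}\xi)=N\,H_\mu(\alpha\mid\xi)\le N\bigl(H_\mu(\alpha\mid\mathcal{K})+\epsilon\bigr)$. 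Everything thus reduces to showing $\tfrac1N H_\mu\bigl(\bigvee_{i=1}^N T^{-n_i}\xi\bigr)\to 0$, i.e.\ that the discrete-spectrum system $(X,\mathcal{K},\mu,T)$ has zero sequence entropy along every sequence (Kushnirenko's theorem). This can be obtained directly by approximating $\xi$, up to arbitrarily small conditional entropy, by a partition generated by finitely many bounded eigenfunctions $f_1,\dots,f_m$ and noting that each $T^{-n_i}$ merely multiplies $f_j$ by a unimodular constant, so that $\bigvee_{i=1}^N T^{-n_i}\xi$ is, up to small error, coarser than a partition with only $O(N^m)$ atoms. Letting $N\to\infty$ and then $\epsilon\to0$ gives $h^{\mathcal{A}}_\mu(T,\alpha)\le H_\mu(\alpha\mid\mathcal{K})$.

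\smallskip
\noindent\emph{Lower bound.} For $A\in\alpha$ write $1_A=\mathbb{E}_\mu(1_A\mid\mathcal{K})+g_A$; then $g_A$ is bounded and orthogonal to every eigenfunction, hence its spectral measure is atomless, so $\tfrac1M\sum_{n=1}^M|\langle U_T^n g_A,h\rangle|^2\to0$ and therefore $\langle U_T^n g_A,h\rangle\to0$ along a set of $n$ of density one, for every fixed $h\in L^2(\mu)$. Realise the Kronecker factor (Halmos-von Neumann) as a rotation $S$ on a compact abelian group $Y$ with Haar measure $\nu=\pi_*\mu$, and write $\mathbb{E}_\mu(1_A\mid\mathcal{K})=\tilde\rho_A\circ\pi$. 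I construct $\mathcal{A}=(n_i)$ inductively: given $n_1<\dots<n_{k-1}$, set $\mathcal{D}_{<k}=\bigvee_{i<k}T^{-n_i}\alpha$ and $\mathbb{E}_\mu(1_Q\mid\mathcal{K})=\tilde q_Q\circ\pi$ for its (finitely many) atoms $Q$. Since $\mathcal{K}$ is $T$-invariant,
\[
\mu(T^{-n}A\cap Q)=\int_Y\tilde\rho_A(S^n y)\,\tilde q_Q(y)\,d\nu(y)+\langle U_T^n g_A,1_Q\rangle,
\]
and the last summand is $o(1)$ along a density-one set of $n$. Choosing $n_k>n_{k-1}$ in that set makes the mutual information $H_\mu(\alpha)-H_\mu(T^{-n_k}\alpha\mid\mathcal{D}_{<k})$ differ by at most a prescribed $\epsilon_k$ from
\[
\sum_{A\in\alpha,\;Q\in\mathcal{D}_{<k}}\Bigl(\int_Y\tilde\rho_A(S^{n_k}y)\tilde q_Q(y)\,d\nu(y)\Bigr)\log\frac{\int_Y\tilde\rho_A(S^{n_k}y)\tilde q_Q(y)\,d\nu(y)}{\mu(A)\,\mu(Q)}.
\]
For every group element $w=S^{n_k}$ this sum is at most $H_\mu(\alpha)-H_\mu(\alpha\mid\mathcal{K})$: it is exactly the mutual information of two random labels that, conditionally on a Haar-random point $y$ of $Y$, are drawn \emph{independently} with laws $(\tilde\rho_A(wy))_A$ and $(\tilde q_Q(y))_Q$, so ``label $A$'' --- $y$ --- ``label $Q$'' is a Markov chain and the data-processing inequality bounds it by the mutual information of ``label $A$'' with $y$, which equals $H_\mu(\alpha)-H_\mu(\alpha\mid\mathcal{K})$ after using Haar-invariance. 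Hence $H_\mu(T^{-n_k}\alpha\mid\mathcal{D}_{<k})\ge H_\mu(\alpha\mid\mathcal{K})-\epsilon_k$. Taking $(\epsilon_k)$ summable, the chain rule gives $H_\mu\bigl(\bigvee_{i=1}^N T^{-n_i}\alpha\bigr)=\sum_{k=1}^N H_\mu(T^{-n_k}\alpha\mid\mathcal{D}_{<k})\ge N\,H_\mu(\alpha\mid\mathcal{K})-\sum_k\epsilon_k$, so $h^{\mathcal{A}}_\mu(T,\alpha)\ge H_\mu(\alpha\mid\mathcal{K})$ and therefore $h^*_\mu(T,\alpha)\ge H_\mu(\alpha\mid\mathcal{K})$.

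\smallskip
\noindent The main obstacle is the lower bound. The subtle point is that the non-Kronecker component $g_A$ decorrelates from any fixed function --- but only along a density-one set, and only because its spectral measure is atomless --- whereas the Kronecker component does \emph{not} decorrelate; it merely rotates on the compact group $Y$. In particular one cannot hope for (and does not get) relative independence of the $T^{-n_i}\alpha$ over $\mathcal{K}$ (this already fails for Anzai skew products), so the correct bound on the limiting mutual information must be extracted from the data-processing inequality on $Y$ rather than from any independence statement. One must also take care that the ``discrete spectrum implies zero sequence entropy'' input used in the upper bound is proved independently of Theorem~\ref{thm-se-enytopy} itself.
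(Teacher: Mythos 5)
The paper does not actually prove this theorem; it cites it from Huang--Maass--Ye (reference \cite{HMY04} in the bibliography), so your argument can only be judged on its own terms. On those terms, the core of what you do is sound and essentially the standard route: the upper bound is Kushnirenko's theorem plus $H(\beta)\le H(\gamma)+H(\beta\mid\gamma)$ and subadditivity, and for the lower bound the decomposition $1_A=\mathbb{E}_\mu(1_A\mid\mathcal{K})+g_A$, the Wiener-lemma/density-one decorrelation of the atomless part, and the inductive choice of $n_k$ are all correct. Your data-processing bound on the factor, $I(\mathbf{A};\mathbf{Q})\le I(\mathbf{A};y)=H_\mu(\alpha)-H_\mu(\alpha\mid\mathcal{K})$, is a clean way to extract the exact constant, and your closing remark that one cannot hope for relative independence over $\mathcal{K}$ is exactly the right diagnosis.

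There is, however, one genuine gap: the opening sentence ``we may assume $\mu$ is ergodic, the general case being handled by running the same argument relative to the invariant $\sigma$-algebra.'' The theorem is stated for arbitrary $\mu\in\mathcal{M}(X,T)$, and no reduction to the ergodic case via the ergodic decomposition can work, because sequence entropy does not integrate over ergodic components and the Kronecker algebra of $\mu$ is in general strictly smaller than the ``fibrewise'' Kronecker algebras of the components. Concretely, for $T(x,y)=(x,x+y)$ on $\mathbb{T}^2$ with Lebesgue measure and $\alpha=\{\mathbb{T}\times[0,\tfrac12),\,\mathbb{T}\times[\tfrac12,1)\}$, every ergodic component is an irrational rotation (so $h^*_{\mu_x}(T,\alpha)=0$ for a.e.\ $x$), yet $\mathcal{K}(X,\mathcal{X},\mu,T)$ is just the $\sigma$-algebra of sets depending on $x$, $H_\mu(\alpha\mid\mathcal{K})=\log 2$, and along $n_i=2^i$ the join $\bigvee_{i=1}^N T^{-n_i}\alpha$ reads off $N$ binary digits of $x$, so $h^*_\mu(T,\alpha)=\log 2>0=\int h^*_{\mu_x}(T,\alpha)\,dx$. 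So the non-ergodic case is not a formality, and your lower-bound argument as written does invoke ergodicity in an essential-looking way, through the Halmos--von Neumann realisation of $\mathcal{K}$ as a group rotation and through ``Haar-invariance.''

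The good news is that the gap is repairable without new ideas: nothing in your lower bound actually uses the group structure. Replace $(Y,S,\nu)$ by any measurable realisation of the abstract factor associated with $\mathcal{K}(X,\mathcal{X},\mu,T)$; then $\nu$ is merely $S$-invariant, the identity $\mu(T^{-n}A\cap Q)=\int_Y\tilde\rho_A(S^ny)\tilde q_Q(y)\,d\nu(y)+\langle U_T^ng_A,1_Q\rangle$ still holds because $\mathcal{K}$ is $T$-invariant, the decorrelation of $g_A$ in density still holds because $g_A\perp E_\lambda$ for every $\lambda$ forces its spectral measure to be atomless (no ergodicity needed), and in the data-processing step the computation $H(\mathbf{A}\mid y)=H_\mu(\alpha\mid\mathcal{K})$ and $\sum_Q p_{AQ}=\mu(A)$ only use $S$-invariance of $\nu$, not Haar-ness. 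The upper bound likewise survives, since bounded eigenfunctions span $L^2(X,\mathcal{K},\mu)$ by definition in the non-ergodic case as well. You should rewrite the proof in that generality and delete the appeal to Halmos--von Neumann; as it stands, the claimed reduction is the one step that would fail.
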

Thus, a system $(X,\mathcal{X},\mu,T)$ has discrete spectrum if and only if $h^{*}_\mu(T)=0$; this is exactly Corollary~2.2 in \cite{HY09}.

We now show that $L^{2}(X,\mathcal{K}(X,\mathcal{X},\mu,T),\mu)$ (resp.\ $L^{2}(X,\mathcal{K}_{\mathrm{rat}}(X,\mathcal{X},\mu,T),\mu)$) decomposes into the orthogonal sum of eigenspaces with eigenvalues (resp.\ rational eigenvalues).  For each $\lambda\in\operatorname{Spec}(X,\mathcal{X},\mu,T)$ let
\[
E_{\lambda}:=\bigl\{h\in L^{2}(X,\mathcal{X},\mu):U_{T}h=\lambda h\bigr\}.
\]
Then $E_{\lambda}$ is a closed subspace of $L^{2}(X,\mathcal{X},\mu)$; since the latter is separable, so is $E_{\lambda}$, and we can pick a countable orthonormal basis $\{f_{i}^{\lambda}\}_{i\in I_{\lambda}}$ of $E_{\lambda}$.  Note that $H\subseteq\bigcup_{\lambda\in\operatorname{Spec}(X,\mathcal{X},\mu,T)}E_{\lambda}\subset\overline{H}$.  As $\operatorname{Spec}(X,\mathcal{X},\mu,T)$ is countable and $E_{\lambda}\perp E_{\lambda'}$ for $\lambda\neq\lambda'$, we obtain the orthogonal decompositions
\[
L^{2}(X,\mathcal{K}(X,\mathcal{X},\mu,T),\mu)=\overline{H}=\bigoplus_{\lambda\in\operatorname{Spec}(X,\mathcal{X},\mu,T)}E_{\lambda}.
\]
Similarly,
\[
L^{2}(X,\mathcal{K}_{\mathrm{rat}}(X,\mathcal{X},\mu,T),\mu)=\bigoplus_{\lambda\in\operatorname{Spec}_{\mathrm{rat}}(X,\mathcal{X},\mu,T)}E_{\lambda}.
\]
We summarize this discussion in the following lemma; it will be used in the proof of Proposition~\ref{c-0}.

\begin{lem}\label{dec-rat-part}
Let \((X,\mathcal{X},\mu,T)\) be a system.  Then
\begin{enumerate}
\item \(\{f_{i}^{\lambda}:\lambda\in\operatorname{Spec}(X,\mathcal{X},\mu,T),\,i\in I_{\lambda}\}\) is a countable orthonormal basis of \(L^{2}\!\bigl(X,\mathcal{K}(X,\mathcal{X},\mu,T),\mu\bigr)\).

\item
Every \(u\in L^{2}\!\bigl(X,\mathcal{K}_{\mathrm{rat}}(X,\mathcal{X},\mu,T),\mu\bigr)\) can be written uniquely as
\[
u=\sum_{\lambda\in\operatorname{Spec}_{\mathrm{rat}}(X,\mathcal{X},\mu,T)}u_{\lambda},\quad u_{\lambda}\in E_{\lambda},
\]
and
\[
\|u\|_{L^{2}(\mu)}^{2}
=\sum_{\lambda\in\operatorname{Spec}_{\mathrm{rat}}(X,\mathcal{X},\mu,T)}\|u_{\lambda}\|_{L^{2}(\mu)}^{2}.
\]
\end{enumerate}
\end{lem}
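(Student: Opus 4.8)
The plan is to read off both assertions from the orthogonal decomposition $L^{2}(X,\mathcal{K}(X,\mathcal{X},\mu,T),\mu)=\overline{H}=\bigoplus_{\lambda\in\operatorname{Spec}(X,\mathcal{X},\mu,T)}E_{\lambda}$ (and its rational analogue) exhibited just before the statement, combined with three elementary observations. First, eigenspaces for distinct eigenvalues are orthogonal: if $U_{T}f=\lambda f$ and $U_{T}g=\lambda' g$ with $\lambda\neq\lambda'$, then unitarity of $U_{T}$ gives $\langle f,g\rangle=\langle U_{T}f,U_{T}g\rangle=\lambda\overline{\lambda'}\,\langle f,g\rangle$, and since $|\lambda|=|\lambda'|=1$ and $\lambda\neq\lambda'$ we have $\lambda\overline{\lambda'}\neq 1$, forcing $\langle f,g\rangle=0$; hence $E_{\lambda}\perp E_{\lambda'}$. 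Second, each $E_{\lambda}$ is a closed subspace of the separable space $L^{2}(X,\mathcal{X},\mu)$, hence itself separable, so the chosen family $\{f_{i}^{\lambda}\}_{i\in I_{\lambda}}$ is a genuine countable orthonormal basis of $E_{\lambda}$. Third, $\operatorname{Spec}(X,\mathcal{X},\mu,T)$ is countable, so the concatenated family $\mathcal{F}:=\{f_{i}^{\lambda}:\lambda\in\operatorname{Spec}(X,\mathcal{X},\mu,T),\,i\in I_{\lambda}\}$ is countable.

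For part (1), the family $\mathcal{F}$ is orthonormal (within a fixed $E_{\lambda}$ by construction, and across distinct $\lambda$ by the first observation), and its closed linear span coincides with the closed linear span of $\bigcup_{\lambda}E_{\lambda}$, because the closed linear span of $\{f_{i}^{\lambda}:i\in I_{\lambda}\}$ equals $E_{\lambda}$ for each $\lambda$. The latter closed span is $\bigoplus_{\lambda\in\operatorname{Spec}(X,\mathcal{X},\mu,T)}E_{\lambda}=\overline{H}=L^{2}(X,\mathcal{K}(X,\mathcal{X},\mu,T),\mu)$, so $\mathcal{F}$ is an orthonormal basis of $L^{2}(X,\mathcal{K}(X,\mathcal{X},\mu,T),\mu)$. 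Running the identical argument with $\operatorname{Spec}_{\mathrm{rat}}(X,\mathcal{X},\mu,T)$ and $\mathcal{K}_{\mathrm{rat}}(X,\mathcal{X},\mu,T)$ in place of $\operatorname{Spec}(X,\mathcal{X},\mu,T)$ and $\mathcal{K}(X,\mathcal{X},\mu,T)$ shows that $\mathcal{F}_{\mathrm{rat}}:=\{f_{i}^{\lambda}:\lambda\in\operatorname{Spec}_{\mathrm{rat}}(X,\mathcal{X},\mu,T),\,i\in I_{\lambda}\}$ is an orthonormal basis of $L^{2}(X,\mathcal{K}_{\mathrm{rat}}(X,\mathcal{X},\mu,T),\mu)$.

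For part (2), fix $u\in L^{2}(X,\mathcal{K}_{\mathrm{rat}}(X,\mathcal{X},\mu,T),\mu)$ and expand it in the orthonormal basis $\mathcal{F}_{\mathrm{rat}}$ as $u=\sum_{\lambda,i}\langle u,f_{i}^{\lambda}\rangle f_{i}^{\lambda}$, a series converging unconditionally in $L^{2}(\mu)$. Grouping the terms that share the same eigenvalue, put $u_{\lambda}:=\sum_{i\in I_{\lambda}}\langle u,f_{i}^{\lambda}\rangle f_{i}^{\lambda}$; this converges in the closed subspace $E_{\lambda}$, so $u_{\lambda}\in E_{\lambda}$, and unconditional convergence makes the regrouping valid, yielding $u=\sum_{\lambda\in\operatorname{Spec}_{\mathrm{rat}}(X,\mathcal{X},\mu,T)}u_{\lambda}$ in $L^{2}(\mu)$. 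Parseval's identity then gives $\|u\|_{L^{2}(\mu)}^{2}=\sum_{\lambda,i}|\langle u,f_{i}^{\lambda}\rangle|^{2}=\sum_{\lambda}\|u_{\lambda}\|_{L^{2}(\mu)}^{2}$. For uniqueness, suppose also $u=\sum_{\lambda}v_{\lambda}$ with $v_{\lambda}\in E_{\lambda}$. Fixing $\lambda_{0}$ and $i_{0}\in I_{\lambda_{0}}$ and using $E_{\lambda}\perp E_{\lambda_{0}}$ for $\lambda\neq\lambda_{0}$, we get $\langle v_{\lambda_{0}},f_{i_{0}}^{\lambda_{0}}\rangle=\langle u,f_{i_{0}}^{\lambda_{0}}\rangle=\langle u_{\lambda_{0}},f_{i_{0}}^{\lambda_{0}}\rangle$; thus $v_{\lambda_{0}}$ and $u_{\lambda_{0}}$ have the same coordinates with respect to the orthonormal basis $\{f_{i}^{\lambda_{0}}\}_{i\in I_{\lambda_{0}}}$ of $E_{\lambda_{0}}$, hence $v_{\lambda_{0}}=u_{\lambda_{0}}$.

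I do not anticipate any genuine obstacle: the lemma simply repackages the orthogonal decomposition $\overline{H}=\bigoplus_{\lambda}E_{\lambda}$ already established above the statement. The one step meriting a word of care is the regrouping of the orthonormal expansion of $u$ into its eigenvalue-components $u_{\lambda}$, which is legitimate by the unconditional convergence of expansions with respect to an orthonormal basis of a Hilbert space; everything else is routine bookkeeping with countable orthonormal families.
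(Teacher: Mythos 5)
Your proposal is correct and follows essentially the same route as the paper: the paper's ``proof'' of this lemma is precisely the discussion preceding its statement, which establishes the orthogonality of distinct eigenspaces, their separability, the countability of the spectrum, and the resulting orthogonal decompositions $L^{2}(X,\mathcal{K}(X,\mathcal{X},\mu,T),\mu)=\bigoplus_{\lambda}E_{\lambda}$ and its rational analogue. Your write-up merely fills in the routine Hilbert-space bookkeeping (Parseval, unconditional regrouping, uniqueness) that the paper leaves implicit.
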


\subsection{Nilsystems}
Let \(G\) be a group. For \(g, h \in G\), we write
\(
[g, h] = ghg^{-1}h^{-1}
\)
for the commutator of \(g\) and \(h\), and we write \([A, B]\) for the subgroup generated by \(\{[a, b] : a \in A, b \in B\}\). The commutator subgroups \(G_j\), \(j \geq 1\), are defined inductively by setting
\(G_1 = G\) and $G_{j+1} = [G_j, G]$.
Let \(k \geq 1\) be an integer. We say that \(G\) is \emph{\(k\)-step nilpotent} if \(G_{k+1}\) is the trivial subgroup.

Let \(G\) be a \(k\)-step nilpotent Lie group and let \(\Gamma\) be a discrete, cocompact subgroup of \(G\).  The compact manifold \(X=G/\Gamma\) is called a \emph{\(k\)-step nilmanifold}.  The group \(G\) acts on \(X\) by left translations, written \((g,x)\mapsto gx\).  The Haar measure \(\mu\) on \(X\) is the unique probability measure invariant under this action.  Fix \(t\in G\) and let \(T\) be the transformation \(x\mapsto tx\) of \(X\); then \((X,\mathcal{X},\mu,T)\) is called a \emph{\(k\)-step nilsystem}.

\begin{de}\label{de-k-step-nil}
Let \(k\in\mathbb{N}\).  A measure-preserving system \((X,\mathcal{X},\mu,T)\) is called a \emph{\(k\)-step pro-nilsystem} (or a system of order \(k\)) if it is the measure-theoretic inverse limit of \(k\)-step nilsystems.
\end{de}

By the Host-Kra seminorms \cite{HK0} one can define factors $(Z_{d-1},\mathcal{Z}_{d-1},\mu_{d-1},T_{d-1})$ for an ergodic system $(X,\mathcal{X},\mu,T)$.

\begin{de}\label{HK-factor}
Let $(X,\mathcal{X},\mu,T)$ be an ergodic  system.
For every $d\in\mathbb{N}$ there exists a $T$-invariant sub-$\sigma$-algebra $\mathcal{Z}_{d-1}\subseteq\mathcal{X}$ such that for $f\in L^{\infty}(\mu)$,
\begin{equation}\label{eq:44}
\vert\mkern-2mu\vert\mkern-2mu\vert f\vert\mkern-2mu\vert\mkern-2mu\vert_{d}=0 \quad\Longleftrightarrow\quad \mathbb{E}_\mu(f|\mathcal{Z}_{d-1})=0,
\end{equation}
where the HK-seminorms $\vert\mkern-2mu\vert\mkern-2mu\vert \cdot \vert\mkern-2mu\vert\mkern-2mu\vert_{k}$ are defined inductively as follows: for $f \in L^{\infty}(\mu)$, we let $\vert\vert\vert f\vert\mkern-2mu\vert\mkern-2mu\vert_{1} := \left|\int f\, d\mu\right|$ and
	\[
	\vert\mkern-2mu\vert\mkern-2mu\vert f \vert\mkern-2mu\vert\mkern-2mu\vert_{k+1}^{2^{k+1}} :=\lim_{N\to\infty}\frac{1}{N} \sum_{n=1}^N \vert\vert\vert\overline{f} \cdot T^{n}f \vert\mkern-2mu\vert\mkern-2mu\vert_{k}^{2^{k}} \quad \text{for } k \in \mathbb{N},
	\]
		where all limits can be shown to exist.

The factor system associated with $\mathcal{Z}_{d-1}$ is denoted $(Z_{d-1},\mathcal{Z}_{d-1},\mu_{d-1},T_{d-1})$.
\end{de}
The sub-$\sigma$-algebra $\mathcal{Z}_{d-1}$ is called the \emph{factor of order $d-1$} of $(X,\mathcal{X},\mu,T)$.
It should be noted that $\mathcal{Z}_{1}$ is precisely the Kronecker factor $\mathcal{K}(X,\mathcal{X},\mu,T)$. In \cite{HK0} it was shown that \( (Z_{d-1}, \mathcal{Z}_{d-1}, \mu_{d-1}, T_{d-1}) \) has a very nice structure.

\begin{thm}[See {\cite[Theorem 10.1]{HK0}}]\label{cs-pro-nil}
Let \( (X, \mathcal{X}, \mu, T) \) be an ergodic measure-preserving system and let \( d \in \mathbb{N} \). Then the system \( (Z_{d-1}, \mathcal{Z}_{d-1}, \mu_{d-1}, T_{d-1}) \) is a \( (d-1) \)-step pro-nilsystem.
\end{thm}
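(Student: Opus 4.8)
\medskip

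\noindent\textbf{Proof proposal.}
Theorem~\ref{cs-pro-nil} is the Host--Kra structure theorem, so the plan is to follow their strategy: exhibit the tower $\mathcal{Z}_{0}\subseteq\mathcal{Z}_{1}\subseteq\cdots\subseteq\mathcal{Z}_{d-1}$ as a sequence of compact abelian group extensions and then recognize such a tower as a pro-nilsystem, the whole argument running by induction on $d$. The base cases are immediate: by ergodicity $\mathcal{Z}_{0}=\{\emptyset,X\}\bmod\mu$ gives the trivial ($0$-step) nilsystem, and $\mathcal{Z}_{1}=\mathcal{K}(X,\mathcal{X},\mu,T)$ is the Kronecker factor, which by the Halmos--von Neumann theorem is a rotation on a compact abelian metrizable group, hence an inverse limit of finite-dimensional group rotations, i.e.\ a $1$-step pro-nilsystem.

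\medskip

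\noindent\textbf{Step 1: the tower consists of abelian extensions by cocycles of type $k$.}
First I would show that for each $k$ the factor $(Z_{k},\mathcal{Z}_{k},\mu_{k},T_{k})$ is an isometric extension of $(Z_{k-1},\mathcal{Z}_{k-1},\mu_{k-1},T_{k-1})$, and in fact, after replacing the fibre by the closed subgroup of translations it generates, an extension $Z_{k}\cong Z_{k-1}\times_{\rho_{k}}U_{k}$ by a compact abelian group $U_{k}$ and a measurable cocycle $\rho_{k}\colon Z_{k-1}\to U_{k}$. The delicate point, and precisely where the cubic-measure apparatus behind Definition~\ref{HK-factor} is used, is that the defining relation $\vert\mkern-2mu\vert\mkern-2mu\vert f\vert\mkern-2mu\vert\mkern-2mu\vert_{k+1}=0\iff \mathbb{E}_{\mu}(f\mid\mathcal{Z}_{k})=0$ forces $\rho_{k}$ to be a cocycle \emph{of type $k$}: its $k$-fold cubic derivative $\Delta^{[k]}\rho_{k}$, obtained by taking successive multiplicative differences along the $2^{k}$ coordinates of $X^{[k]}$, is a coboundary for the system $(Z_{k-1}^{[k]},\mu_{k-1}^{[k]})$. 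Establishing this amounts to carefully unwinding the inductive definition of $\mu^{[k]}$ as a conditional self-joining over the invariant $\sigma$-algebra.

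\medskip

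\noindent\textbf{Step 2: an abelian type-$k$ extension of a $(k-1)$-step pro-nilsystem is a $k$-step pro-nilsystem.}
Assuming inductively that $Z_{k-1}$ is a $(k-1)$-step pro-nilsystem, say an inverse limit of nilsystems $G/\Gamma$ with $G$ a $(k-1)$-step nilpotent Lie group, I would show that $Z_{k-1}\times_{\rho_{k}}U_{k}$ can be realized inside a $k$-step nilmanifold. The mechanism is: (i) use Mackey's theory of ergodic abelian group extensions to pass to the Mackey group of $\rho_{k}$ and reduce to the case in which $U_{k}$ is connected or finite and $\rho_{k}$ is normalized; (ii) combine the cohomological equation supplied by the type-$k$ property with a Furstenberg--Weiss/Rudolph-style cocycle-straightening argument to modify $\rho_{k}$ (by a coboundary and an automorphism) so that $\widetilde{G}:=G\times U_{k}$ carries a group law making it $k$-step nilpotent, with $\Gamma$ lifting to a cocompact lattice $\widetilde{\Gamma}$; the commutator identities one must verify are exactly those encoded by type $k$. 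Passing to the inverse limit over the finite-dimensional pieces then presents $Z_{k}$ as a $k$-step pro-nilsystem, and applying this with $k=d-1$ gives the theorem.

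\medskip

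\noindent\textbf{Main obstacle.}
The genuinely hard part is Step~2 — equivalently, the technical core of \cite{HK0}. One must develop the theory of cocycles of type $k$ and their cohomology, invoke Mackey's structure theory to reduce to connected structure groups, and then carry out the explicit construction of the nilpotent group law on the extension, checking both that the nilpotency step does not exceed $k$ and that a cocompact discrete subgroup exists. By comparison, Step~1 (the tower of abelian extensions and the type-$k$ property) and the inverse-limit bookkeeping are routine once the cubic measures and seminorms of Definition~\ref{HK-factor} are in hand.
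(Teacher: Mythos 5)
This statement is not proved in the paper: Theorem~\ref{cs-pro-nil} is imported verbatim from Host and Kra \cite[Theorem~10.1]{HK0} and used as a black box, so there is no internal proof to compare your attempt against. Judged on its own terms, your proposal is a reasonable roadmap of the Host--Kra argument --- the tower of compact abelian extensions by cocycles of type $k$, followed by the structural step that such an extension of a $(k-1)$-step pro-nilsystem is a $k$-step pro-nilsystem --- but it is a table of contents rather than a proof. Both of your steps defer essentially all of the content: in Step~1, extracting the compact abelian structure group and the type-$k$ property of the cocycle from the seminorm characterization in Definition~\ref{HK-factor} already occupies several sections of \cite{HK0} (the machinery of the cubic measures $\mu^{[k]}$, their invariant $\sigma$-algebras, and the attendant cocycle cohomology), and in Step~2 you explicitly acknowledge that you are pointing at ``the technical core of \cite{HK0}'' rather than supplying it.

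Moreover, the mechanism you describe for Step~2 is not quite how the actual proof goes, and as stated it would not work for general $k$. Endowing $G\times U_{k}$ directly with a $k$-step nilpotent group law by straightening the cocycle is essentially the Conze--Lesigne/Furstenberg--Weiss picture for $k=2$; for higher $k$, Host and Kra first reduce to \emph{toral} systems (each structure group a finite-dimensional torus times a finite group, via an inverse-limit reduction that itself requires proof), and then construct the group of measure-preserving transformations commuting with $T$ up to order $k$, proving that it is a locally compact $k$-step nilpotent group acting transitively on $X$ --- this transitive nilpotent action is what exhibits $X$ as a nilmanifold, not an explicit group law on the product. So even as an outline Step~2 should be reformulated, and as a proof the proposal has a gap coextensive with the theorem itself. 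For the purposes of this paper it is both legitimate and intended to cite the result rather than reprove it.
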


An \emph{$\infty$-step pro-nilsystem} was introduced and studied in \cite{DDMSY}. By construction the factors $\mathcal{Z}_{k}$ ($k\ge 1$) form an increasing sequence (see \cite[Lemma 3.9]{HK0}):
\[
\mathcal{Z}_{1}\subseteq\mathcal{Z}_{2}\subseteq\dots\subseteq\mathcal{X}.
\]
Let $\mathcal{Z}_{\infty}:=\bigvee_{k\in\mathbb{N}}\mathcal{Z}_{k}$ and  let $(X_{\infty},\mathcal{X}_{\infty},\mu_{\infty},T_\infty)$ be the factor associated with $\mathcal{Z}_{\infty}$.
This system $(X_{\infty},\mathcal{X}_{\infty},\mu_{\infty},T_\infty)$ is the inverse limit of the sequence $(X_{k},\mathcal{X}_{k},\mu_{k},T_k)_{k\in\mathbb{N}}$ and is called an \emph{$\infty$-step pro-nilfactor} of $(X,\mathcal{X},\mu,T)$.

\begin{de} A system $(X,\mathcal{X},\mu,T)$ is an \emph{ergodic $\infty$-step pro-nilsystem} if it is ergodic and  $\mathcal{Z}_{\infty}=\mathcal{X} \mod \mu$.
In this case, $(X,\mathcal{X},\mu,T)$ is measure-theoretically isomorphic to the inverse limit of the sequence $(X_{k},\mathcal{X}_{k},\mu_{k},T_k)_{k\in\mathbb{N}}$.
\end{de}

\begin{rem}\label{rem-smooth-nil} We need to point out that by Theorem~\ref{cs-pro-nil} and Definition \ref{de-k-step-nil}, if $(X,\mathcal{X},\mu,T)$ is an ergodic $\infty$-step pro-nilsystem, then it is measure-theoretically isomorphic to the inverse limit of a nilsystem sequence $(X_{k},\mathcal{X}_{k},\mu_{k},T_k)_{k\in\mathbb{N}}$, where each $(X_{k},\mathcal{X}_{k},\mu_{k},T_k)$ is an ergodic $s_k$-step nilsystem for some $s_k\in\mathbb{N}$.
\end{rem}

Next, we introduce smooth \(k\)-step nilsequences, and a direct theorem is taken from \cite{HK}.
\begin{de}
Let $k\in \mathbb{N}$ and $(X,\mathcal{X},\mu,T)$ be a $k$-step nilsystem, $f:X\to\mathbb{C}$ a continuous function, $\tau\in G$, and $x_0\in X$.
The sequence $\bigl(f(\tau^n x_0)\bigr)_{n\in\mathbb{Z}}$ is called a \emph{basic $k$-step nilsequence}.
If, in addition, the function $f\in C^{\infty}(X)$, then $\bigl(f(\tau^n x_0)\bigr)_{n\in\mathbb{Z}}$ is called a \emph{smooth $k$-step nilsequence}.
\end{de}

Let $k\ge 2$ be an integer. In Section~5.3 in \cite{HK}, for every $(k-1)$-step nilmanifold $X$ Host and Kra define a norm $\vert\mkern-2mu\vert\mkern-2mu\vert \cdot \vert\mkern-2mu\vert\mkern-2mu\vert_k^*$ on the space $C^{\infty}(X)$ of smooth functions on $X$ (see Definition 5.5 and Proposition 5.6 in \cite{HK}).
Let ${\bf b}=(b_{n})_{n\in\mathbb{Z}}$ be a smooth $(k-1)$-step nilsequence. Then there exists an ergodic $(k-1)$-step nilsystem (see \cite[Corollary~3.3]{HK}), a smooth function $f$ on $X$, and a point $x_{0}\in X$ such that
\[
b_{n}=f(T^{n}x_{0}) \quad\text{for every } n\in\mathbb{Z}.
\]
The same sequence ${\bf b}$ can be represented in this way in several manners, with different systems, different starting points, and different functions, but  Corollary~5.8 in \cite{HK} shows that all associated functions $f$ have the same norm $\vert\mkern-2mu\vert\mkern-2mu\vert f\vert\mkern-2mu\vert\mkern-2mu\vert_{k}^*$. Therefore we can define (see \cite[Definition 5.9]{HK})
$\vert\mkern-2mu\vert\mkern-2mu\vert {\bf b}\vert\mkern-2mu\vert\mkern-2mu\vert_{k}^* := \vert\mkern-2mu\vert\mkern-2mu\vert f\vert\mkern-2mu\vert\mkern-2mu\vert_{k}^*$,
where $f$ is any of the possible functions.

The following Theorem comes from Theorem 2.13 and Corollary 3.10 in \cite{HK}.
\begin{thm}[Direct Theorem]\label{thm-direct} Let $(Y,\mathcal{Y},\nu,R)$ an ergodic system and $k\ge 2$. If $h\in L^\infty(Y,\mathcal{Y},\nu)$,  then for
$\nu$-a.e. $y\in Y$,
$$\limsup_{N\rightarrow +\infty}|\frac{1}{N}\sum_{n=1}^N h(R^ny)b_n|\le
\vert\mkern-2mu\vert\mkern-2mu\vert h \vert\mkern-2mu\vert\mkern-2mu\vert_k \vert\mkern-2mu\vert\mkern-2mu\vert {\bf b} \vert\mkern-2mu\vert\mkern-2mu\vert_k^* $$
for every a smooth $(k-1)$-step nilsequence ${\bf b}=(b_{n})_{n\in\mathbb{Z}}$. In particular, if $\vert\mkern-2mu\vert\mkern-2mu\vert h \vert\mkern-2mu\vert\mkern-2mu\vert_{k}=0$, then
$\nu$-a.e. $y\in Y$,
$$\lim \frac{1}{N}\sum_{n=1}^N h(R^ny)b_n=0$$
for every a smooth $(k-1)$-step nilsequence ${\bf b}=(b_{n})_{n\in\mathbb{Z}}$.
\end{thm}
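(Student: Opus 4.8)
The plan is to reproduce the argument of Host and Kra, which proceeds by induction on $k\ge 2$: the base case $k=2$ is a quantitative Wiener--Wintner ergodic theorem on the Kronecker factor, and the inductive step is carried out by van der Corput's inequality together with the recursive structure of both the HK-seminorm $\vert\mkern-2mu\vert\mkern-2mu\vert\cdot\vert\mkern-2mu\vert\mkern-2mu\vert_{k}$ (Definition~\ref{HK-factor}) and the dual seminorm $\vert\mkern-2mu\vert\mkern-2mu\vert\cdot\vert\mkern-2mu\vert\mkern-2mu\vert_{k}^{*}$ on nilsequences. Note first that the ``in particular'' clause follows instantly from the main inequality: if $\vert\mkern-2mu\vert\mkern-2mu\vert h\vert\mkern-2mu\vert\mkern-2mu\vert_{k}=0$ its right-hand side is $0$, hence $\limsup_{N}|\tfrac1N\sum_{n\le N}h(R^{n}y)b_{n}|=0$ for $\nu$-a.e.\ $y$ and the limit is $0$. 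Note also, from \eqref{eq:44}, that $\vert\mkern-2mu\vert\mkern-2mu\vert h\vert\mkern-2mu\vert\mkern-2mu\vert_{k}=\vert\mkern-2mu\vert\mkern-2mu\vert\mathbb{E}_{\nu}(h\mid\mathcal{Z}_{k-1})\vert\mkern-2mu\vert\mkern-2mu\vert_{k}$, so the bound is sharp precisely for $\mathcal{Z}_{k-1}$-measurable $h$; nevertheless the induction does not require this reduction, and I would keep $h\in L^{\infty}(\nu)$ arbitrary.

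\emph{Base case $k=2$.} A smooth $1$-step nilsequence has the form $b_{n}=f(x_{0}+n\alpha)$ with $X$ a compact abelian Lie group, $f\in C^{\infty}(X)$ and $x_{0},\alpha\in X$. Expanding $f=\sum_{\chi}\widehat f(\chi)\chi$ in characters and writing $\lambda_{\chi}=\chi(\alpha)$,
\[
\frac1N\sum_{n=1}^{N}h(R^{n}y)b_{n}=\sum_{\chi}\widehat f(\chi)\,\chi(x_{0})\,\frac1N\sum_{n=1}^{N}\lambda_{\chi}^{\,n}h(R^{n}y).
\]
The (quantitative) Wiener--Wintner ergodic theorem supplies a single $\nu$-conull set of $y$ on which $\limsup_{N}\bigl(\sum_{\lambda\in\mathbb{T}}\bigl|\tfrac1N\sum_{n\le N}\lambda^{n}h(R^{n}y)\bigr|^{4}\bigr)^{1/4}\le\vert\mkern-2mu\vert\mkern-2mu\vert h\vert\mkern-2mu\vert\mkern-2mu\vert_{2}$, using that $\vert\mkern-2mu\vert\mkern-2mu\vert h\vert\mkern-2mu\vert\mkern-2mu\vert_{2}^{4}=\sum_{\lambda}\|P_{\lambda}h\|_{L^{2}(\nu)}^{4}$ (with $P_{\lambda}$ the orthogonal projection onto the $\lambda$-eigenspace) and that $\tfrac1N\sum_{n\le N}\lambda^{n}h(R^{n}y)$ converges $\nu$-a.e.\ to the projection of $h$ onto the corresponding eigenspace. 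Applying H\"older's inequality with conjugate exponents $4$ and $4/3$ to the displayed sum, together with the fact that $\vert\mkern-2mu\vert\mkern-2mu\vert\mathbf{b}\vert\mkern-2mu\vert\mkern-2mu\vert_{2}^{*}$ is comparable to the $\ell^{4/3}$-norm of $(\widehat f(\chi))_{\chi}$, gives the bound for $k=2$.

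\emph{Inductive step.} Fix $k\ge 3$ and assume the theorem for $k-1$; let $\mathbf{b}=(b_{n})$ be a smooth $(k-1)$-step nilsequence and $h\in L^{\infty}(\nu)$. Choose a $\nu$-conull set of $y$ that simultaneously works in the inductive hypothesis for each of the countably many functions $\overline{h}\cdot(h\circ R^{m})$, $m\in\mathbb{N}$. For such $y$, van der Corput's inequality applied to $u_{n}=h(R^{n}y)b_{n}$ gives
\[
\limsup_{N}\Bigl|\frac1N\sum_{n=1}^{N}u_{n}\Bigr|^{2}\le\limsup_{H}\frac1H\sum_{m=1}^{H}\limsup_{N}\Bigl|\frac1N\sum_{n=1}^{N}\bigl(\overline{h}\cdot(h\circ R^{m})\bigr)(R^{n}y)\,\overline{b_{n}}\,b_{n+m}\Bigr|.
\]
For each $m$ the sequence $(\overline{b_{n}}\,b_{n+m})_{n}$ is a smooth $(k-2)$-step nilsequence: if $b_{n}=F(\tau^{n}z_{0})$ on a $(k-1)$-step nilmanifold, the differenced sequence is realised on the $(k-2)$-step nilmanifold obtained by quotienting out the last nontrivial commutator subgroup. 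By the inductive hypothesis,
\[
\limsup_{N}\Bigl|\frac1N\sum_{n=1}^{N}\bigl(\overline{h}\cdot(h\circ R^{m})\bigr)(R^{n}y)\,\overline{b_{n}}\,b_{n+m}\Bigr|\le\vert\mkern-2mu\vert\mkern-2mu\vert\overline{h}\cdot(h\circ R^{m})\vert\mkern-2mu\vert\mkern-2mu\vert_{k-1}\;\vert\mkern-2mu\vert\mkern-2mu\vert(\overline{b_{n}}\,b_{n+m})_{n}\vert\mkern-2mu\vert\mkern-2mu\vert_{k-1}^{*}.
\]
Substituting this, applying H\"older's inequality in $m$ with the exponents matched to the power $2^{k}$ occurring in the recursion for $\vert\mkern-2mu\vert\mkern-2mu\vert\cdot\vert\mkern-2mu\vert\mkern-2mu\vert_{k}$, and letting $H\to\infty$: the first factor tends to $\vert\mkern-2mu\vert\mkern-2mu\vert h\vert\mkern-2mu\vert\mkern-2mu\vert_{k}^{2}$ by the identity $\vert\mkern-2mu\vert\mkern-2mu\vert h\vert\mkern-2mu\vert\mkern-2mu\vert_{k}^{2^{k}}=\lim_{H}\frac1H\sum_{m\le H}\vert\mkern-2mu\vert\mkern-2mu\vert\overline{h}\cdot(h\circ R^{m})\vert\mkern-2mu\vert\mkern-2mu\vert_{k-1}^{2^{k-1}}$, and the second tends to $(\vert\mkern-2mu\vert\mkern-2mu\vert\mathbf{b}\vert\mkern-2mu\vert\mkern-2mu\vert_{k}^{*})^{2}$ by the parallel recursive property of the dual seminorm established in \cite[Section~5]{HK}. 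Hence $\limsup_{N}|\tfrac1N\sum_{n\le N}u_{n}|^{2}\le\vert\mkern-2mu\vert\mkern-2mu\vert h\vert\mkern-2mu\vert\mkern-2mu\vert_{k}^{2}\,(\vert\mkern-2mu\vert\mkern-2mu\vert\mathbf{b}\vert\mkern-2mu\vert\mkern-2mu\vert_{k}^{*})^{2}$; taking square roots closes the induction.

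\emph{Main obstacle.} The delicate part — where all constants must be tracked — is the dual-seminorm bookkeeping in the inductive step: one must check that differencing a smooth $(k-1)$-step nilsequence yields, for each shift $m$, a smooth $(k-2)$-step nilsequence realised on a \emph{fixed} nilmanifold whose dual norms stay bounded and whose appropriate power-averages over $m$ converge to precisely $\vert\mkern-2mu\vert\mkern-2mu\vert\mathbf{b}\vert\mkern-2mu\vert\mkern-2mu\vert_{k}^{*}$ raised to the matching power, so that no spurious multiplicative factor accumulates as $k$ increases; this is exactly the constraint that forces Host and Kra's precise definition of $\vert\mkern-2mu\vert\mkern-2mu\vert\cdot\vert\mkern-2mu\vert\mkern-2mu\vert_{k}^{*}$. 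A secondary, purely measure-theoretic, issue is choosing the exceptional $\nu$-null set independently of the (uncountable) family of admissible nilsequences and of the outer average over $m$; this is handled via the separability of $L^{2}(\nu)$, the countability of the functions $\overline{h}\cdot(h\circ R^{m})$, and an approximation/upper-semicontinuity argument in the representing data $(z_{0},\tau,F)$ of a nilsequence.
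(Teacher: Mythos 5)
The paper does not prove this statement at all: it is imported verbatim from Host--Kra, as the sentence immediately preceding it says (``The following Theorem comes from Theorem 2.13 and Corollary 3.10 in \cite{HK}''). So you are attempting something the authors deliberately outsource, and your sketch, while it correctly isolates the ``in particular'' clause as an immediate consequence of the main inequality, has two genuine gaps at exactly the points where the real content lies.

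First, the inductive step rests on the claim that for each shift $m$ the differenced sequence $(\overline{b_n}\,b_{n+m})_n$ of a smooth $(k-1)$-step nilsequence is a smooth $(k-2)$-step nilsequence, ``realised on the $(k-2)$-step nilmanifold obtained by quotienting out the last nontrivial commutator subgroup.'' This is false as stated: if $b_n=F(\tau^n z_0)$ on $G/\Gamma$ with $G$ of step $k-1$, the function $\overline{F}\otimes F$ on $G\times G/\Gamma\times\Gamma$ does not descend to the quotient by $G_{k-1}\times G_{k-1}$ unless $F$ is $G_{k-1}$-invariant (in which case $\mathbf{b}$ was already a $(k-2)$-step nilsequence). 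The standard repair is to first decompose $F$ into vertical characters of $G_{k-1}/(\Gamma\cap G_{k-1})$ and observe that for each character the tensor $\overline{F}\otimes F$ is invariant under the diagonal $\Delta(G_{k-1})$, so the differenced sequence lives on $\bigl(G\times_{G_{k-1}}G\bigr)/\Delta(G_{k-1})$, which is $(k-2)$-step; none of this appears in your argument, and without it the induction does not close. Second, and more seriously, the step where ``the second [factor] tends to $(\vert\mkern-2mu\vert\mkern-2mu\vert\mathbf{b}\vert\mkern-2mu\vert\mkern-2mu\vert_{k}^{*})^{2}$ by the parallel recursive property of the dual seminorm established in [HK, Section~5]'' invokes a recursion that Host and Kra do not establish and that is not obviously true. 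Their $\vert\mkern-2mu\vert\mkern-2mu\vert\cdot\vert\mkern-2mu\vert\mkern-2mu\vert_{k}^{*}$ is defined through cube integrals over the Host--Kra cube space $X^{[k]}$ of the nilmanifold, and their proof of the direct inequality proceeds by an iterated Cauchy--Schwarz over arithmetic cubes combined with equidistribution of cubes in nilmanifolds --- not by induction on $k$ with van der Corput. Your own ``main obstacle'' paragraph correctly identifies this bookkeeping as the crux and then does not carry it out, so the proof is incomplete precisely where it needs to be complete. (Two smaller points: the ``quantitative Wiener--Wintner'' $\ell^{4}$ bound over all $\lambda\in\mathbb{T}$ that you assert in the base case is itself essentially the $k=2$ instance of the theorem and needs an argument; and ``comparable to'' the $\ell^{4/3}$ norm would introduce a multiplicative constant absent from the stated inequality.) For the purposes of this paper only the case $\vert\mkern-2mu\vert\mkern-2mu\vert h\vert\mkern-2mu\vert\mkern-2mu\vert_{k}=0$ is used, so a bound with any finite constant depending on $\mathbf{b}$ would suffice for the application; but as a proof of the theorem as stated, the argument has real holes, and the honest course here is to do what the authors do and cite \cite{HK}.
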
	
	
\section{Disjoint result}\label{sec-dis}
In this section our aim is to establish a useful disjointness result (Theorem~\ref{thm-zero-joining} below) by exploiting the structure of the Furstenberg systems of the M\"{o}bius and Liouville functions, together with a lemma of Frantzikinakis and Host \cite[Lemma~6.1]{FHost}.
To this end we recall the following notion of orthogonality.

Let $(X,T)$ be a TDS and $\mu\in\mathcal{M}(X,T)$.
For $f\in L^{2}(X,\mathcal{X},\mu)$ and a sub-$\sigma$-algebra $\mathcal{C}$ of $\mathcal{X}$, we say that \emph{$f$ is orthogonal to $\mathcal{C}$ in $L^{2}(X,\mathcal{X},\mu)$} (abbreviated as $f\perp\mathcal{C}$ in $L^{2}(X,\mathcal{X},\mu)$) if
$\mathbb{E}_{\mu}(f\mid\mathcal{C})=0 \quad \mu\text{-a.e.}$
\begin{lem}\label{lem-de-rat}
Let $(X,\mathcal{X},\mu,T)$ be a system and let $\mu=\int_{\Omega}\mu_{\omega}\,d\xi(\omega)$ be its ergodic decomposition.
If $f\in L^{\infty}(X,\mathcal{X},\mu)$ satisfies $f\perp\mathcal{K}_{\mathrm{rat}}(X,\mathcal{X},\mu,T)$ in $L^{2}(X,\mathcal{X},\mu)$, then for $\xi$-a.e.\ $\omega\in\Omega$ one has $f\in L^\infty(X,\mathcal{X},\mu_\omega)$ and $f\perp\mathcal{K}_{\mathrm{rat}}(X,\mathcal{X},\mu_{\omega},T)$ in $L^{2}(X,\mathcal{X},\mu_{\omega})$.
\end{lem}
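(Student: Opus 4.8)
The plan is to localise the orthogonality hypothesis at each ``level'' $q\in\mathbb{N}$ and then apply the Birkhoff pointwise ergodic theorem to the transformation $T^{q}$. The first ingredient is a spectral identification valid for an arbitrary $\nu\in\mathcal{M}(X,T)$: writing $\mathcal{I}_{q}(\nu):=\{B\in\mathcal{X}:T^{-q}B=B\}$ for the $\sigma$-algebra of $T^{q}$-invariant sets, one has
\[
\mathcal{K}_{\mathrm{rat}}(X,\mathcal{X},\nu,T)=\bigvee_{q\in\mathbb{N}}\mathcal{I}_{q}(\nu)\qquad\text{modulo }\nu .
\]
Indeed, if $g\in L^{2}(\nu)$ satisfies $g\circ T^{q}=g$, then for $0\le j<q$ the functions $P_{j}g:=\frac1q\sum_{k=0}^{q-1}e(-jk/q)\,g\circ T^{k}$ satisfy $P_{j}g\circ T=e(j/q)\,P_{j}g$ and $\sum_{j=0}^{q-1}P_{j}g=g$, so $L^{2}(X,\mathcal{I}_{q}(\nu),\nu)=\bigoplus_{j=0}^{q-1}E_{e(j/q)}$ is the orthogonal sum of the eigenspaces at the $q$-th roots of unity; since every rational eigenvalue is such a root of unity for some $q$, comparing with the decomposition $L^{2}(X,\mathcal{K}_{\mathrm{rat}}(X,\mathcal{X},\nu,T),\nu)=\bigoplus_{\lambda\in\operatorname{Spec}_{\mathrm{rat}}(X,\mathcal{X},\nu,T)}E_{\lambda}$ of Lemma~\ref{dec-rat-part} yields the displayed equality. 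In particular, for $h\in L^{2}(\nu)$ one has $h\perp\mathcal{K}_{\mathrm{rat}}(X,\mathcal{X},\nu,T)$ in $L^{2}(\nu)$ if and only if $\mathbb{E}_{\nu}(h\mid\mathcal{I}_{q}(\nu))=0$ $\nu$-a.e.\ for every $q\in\mathbb{N}$.

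Next I would fix once and for all a bounded Borel representative of $f$ with $\sup_{X}|f|\le M:=\|f\|_{L^{\infty}(\mu)}$ (replacing $f$ by $0$ where $|f|>M$ changes it only on a $\mu$-null set). Since $\mu(A)=\int_{\Omega}\mu_{\omega}(A)\,d\xi(\omega)$, every $\mu$-conull Borel set is $\mu_{\omega}$-conull for $\xi$-a.e.\ $\omega$; applied to $\{|f|\le M\}$ this already gives $f\in L^{\infty}(X,\mathcal{X},\mu_{\omega})$ with $\|f\|_{L^{\infty}(\mu_{\omega})}\le M$ for $\xi$-a.e.\ $\omega$, settling the $L^{\infty}$ assertion.

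For the orthogonality, fix $q\in\mathbb{N}$ and set $g_{q}(x):=\limsup_{N\to\infty}\frac1N\sum_{n=0}^{N-1}f(T^{qn}x)$, a Borel function on all of $X$. Birkhoff's theorem for the $\mu$-preserving map $T^{q}$ shows that $\frac1N\sum_{n=0}^{N-1}f\circ T^{qn}$ converges $\mu$-a.e.\ to $\mathbb{E}_{\mu}(f\mid\mathcal{I}_{q}(\mu))$, which vanishes $\mu$-a.e.\ by the hypothesis together with the equivalence of the first paragraph (applied to $\nu=\mu$). Hence the Borel set $A_{q}$ on which the averages converge to $g_{q}$ and $g_{q}=0$ is $\mu$-conull, so $\mu_{\omega}(A_{q})=1$ for all $\omega$ in some $\xi$-conull set $N_{q}\subseteq\Omega$. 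For such $\omega$, Birkhoff's theorem for the $\mu_{\omega}$-preserving map $T^{q}$ identifies $\mathbb{E}_{\mu_{\omega}}(f\mid\mathcal{I}_{q}(\mu_{\omega}))$ with the $\mu_{\omega}$-a.e.\ limit of the very same averages, which is $g_{q}=0$; thus $\mathbb{E}_{\mu_{\omega}}(f\mid\mathcal{I}_{q}(\mu_{\omega}))=0$ $\mu_{\omega}$-a.e. Finally, on the $\xi$-conull set $\bigcap_{q\in\mathbb{N}}N_{q}$ we have $\mathbb{E}_{\mu_{\omega}}(f\mid\mathcal{I}_{q}(\mu_{\omega}))=0$ for every $q$, which by the equivalence of the first paragraph (now with $\nu=\mu_{\omega}$) is precisely $f\perp\mathcal{K}_{\mathrm{rat}}(X,\mathcal{X},\mu_{\omega},T)$ in $L^{2}(\mu_{\omega})$.

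I expect the main obstacle to be purely a matter of careful null-set bookkeeping rather than any substantive difficulty: one must use a single Borel representative of $f$ and a single everywhere-defined Borel version $g_{q}$ of the $T^{q}$-Birkhoff limit throughout, so that the transfer principle ``$\mu$-conull $\Rightarrow$ $\mu_{\omega}$-conull for $\xi$-a.e.\ $\omega$'' can be applied simultaneously to $\{|f|\le M\}$, to the convergence set of the averages, and to the zero set of $g_{q}$. It is worth noting that no ergodicity of $T^{q}$ with respect to $\mu_{\omega}$ is needed; the $T$-ergodicity of the $\mu_{\omega}$ enters only through the fact that they are the ergodic components of $\mu$.
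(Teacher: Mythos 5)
Your proof is correct and follows essentially the same route as the paper: identify $\mathcal{K}_{\mathrm{rat}}$ with the join of the $T^{q}$-invariant $\sigma$-algebras, then apply Birkhoff's theorem for $T^{q}$ twice (once for $\mu$, once for $\mu_{\omega}$) and transfer the $\mu$-conull convergence set to the ergodic components. The only differences are cosmetic: the paper uses the increasing filtration $\mathcal{K}_{p!}$ where you use all $\mathcal{I}_{q}$ with an explicit spectral-projection justification, and you are somewhat more careful about fixing Borel representatives.
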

\begin{proof} First, it is easy to see that if $f\in L^{\infty}(X,\mathcal{X},\mu)$, then for $\xi$-a.e.\ $\omega\in\Omega$ one also has $f\in L^\infty(X,\mathcal{X},\mu_\omega)$.

Fix $\nu\in\mathcal{M}(X,T)$ and $p\in\mathbb{N}$, and define
\[
\mathcal{K}_{p}(X,\mathcal{X},\nu,T)=\{A\in\mathcal{X}:T^{-p}A=A\}.
\]
It is clear that the sub-$\sigma$-algebras $\mathcal{K}_{p!}(X,\mathcal{X},\nu,T)$, $p=1,2,\dots$, form an increasing sequence and that every eigenfunction with a rational eigenvalue is measurable with respect to $\bigvee_{p\in\mathbb{N}}\mathcal{K}_{p!}(X,\mathcal{X},\nu,T)$.
Hence, by definition,
\[
\mathcal{K}_{\mathrm{rat}}(X,\mathcal{X},\nu,T)\subseteq\bigvee_{p\in\mathbb{N}}\mathcal{K}_{p!}(X,\mathcal{X},\nu,T).
\]
Conversely, since each $\mathcal{K}_{p!}(X,\mathcal{X},\nu,T)$ is contained in $\mathcal{K}_{\mathrm{rat}}(X,\mathcal{X},\nu,T)$, we obtain
\[
\mathcal{K}_{\mathrm{rat}}(X,\mathcal{X},\nu,T)=\bigvee_{p\in\mathbb{N}}\mathcal{K}_{p!}(X,\mathcal{X},\nu,T).
\]
	
Fix \( p \in \mathbb{N} \) and \( f \in L^\infty(X, \mathcal{X}, \mu) \) orthogonal to \( \mathcal{K}_p(X, \mathcal{X}, \mu, T) \) in \( L^2(X, \mathcal{X}, \mu) \).
By the Birkhoff ergodic theorem, for \( \mu \)-a.e.\ \( x \in X \),
\[
\lim_{N \to \infty} \frac{1}{N} \sum_{n=0}^{N-1} f(T^{pn}x) = \mathbb{E}_\mu(f \mid \mathcal{K}_p(X, \mathcal{X}, \mu, T))(x) = 0.
\]
Let \( B \) be the measurable set of all points \( x \in X \) for which the above limit holds; then \( B \) has full measure.
Hence, for \( \xi \)-a.e.\ \( \omega \in \Omega \), we have \( \mu_\omega(B) = 1 \).
Applying the Birkhoff ergodic theorem again, for \( \xi \)-a.e.\ \( \omega \in \Omega \) and \( \mu_\omega \)-a.e.\ \( x \in B \),
\[
\lim_{N \to \infty} \frac{1}{N} \sum_{n=0}^{N-1} f(T^{pn}x) = \mathbb{E}_{\mu_\omega}(f \mid \mathcal{K}_p(X, \mathcal{X}, \mu_\omega, T))(x).
\]
Combining this with the previous equation, for \( \xi \)-a.e.\ \( \omega \in \Omega \) and \( \mu_\omega \)-a.e.\ \( x \in B \),
\[
\mathbb{E}_{\mu_\omega}(f \mid \mathcal{K}_p(X, \mathcal{X}, \mu_\omega, T))(x) = 0.
\]
Thus, \( f \) is orthogonal to \( \mathcal{K}_p(X, \mathcal{X}, \mu_\omega, T) \) in \( L^2(X, \mathcal{X}, \mu_\omega) \) for \( \xi \)-a.e.\ \( \omega \in \Omega \).

Now let \( f \in L^\infty(X, \mathcal{X}, \mu) \) be orthogonal to \( \mathcal{K}_{\mathrm{rat}}(X, \mathcal{X}, \mu, T) \) in \( L^2(X, \mathcal{X}, \mu) \).
Then \( f \) is orthogonal to \( \mathcal{K}_{p!}(X, \mathcal{X}, \mu, T) \) in \( L^2(X, \mathcal{X}, \mu) \) for every \( p \in \mathbb{N} \).
Hence, for each \( p \) there exists a full-measure subset \( \Omega_p \subseteq \Omega \) such that \( f \) is orthogonal to \( \mathcal{K}_{p!}(X, \mathcal{X}, \mu_\omega, T) \) in \( L^2(X, \mathcal{X}, \mu_\omega) \) for all \( \omega \in \Omega_p \).
The intersection \( \bigcap_{p \in \mathbb{N}} \Omega_p \) still has full measure, and for every \( \omega \in \bigcap_{p \in \mathbb{N}} \Omega_p \) we have
\[
f \perp \mathcal{K}_{p!}(X, \mathcal{X}, \mu_\omega, T) \quad \text{in } L^2(X, \mathcal{X}, \mu_\omega) \quad \text{for all } p \in \mathbb{N}.
\]
Since \( \mathcal{K}_{\mathrm{rat}}(X, \mathcal{X}, \mu_\omega, T) = \bigvee_{p \in \mathbb{N}} \mathcal{K}_{p!}(X, \mathcal{X}, \mu_\omega, T) \), it follows that
\[
f \perp \mathcal{K}_{\mathrm{rat}}(X, \mathcal{X}, \mu_\omega, T) \quad \text{in } L^2(X, \mathcal{X}, \mu_\omega) \quad \text{for } \xi\text{-a.e.\ } \omega \in \Omega,
\]
which proves Lemma~\ref{lem-de-rat}.
	\end{proof}
The following lemma is taken from \cite[Lemma~6.1]{FHost}.
\begin{lem}\label{lemma-eig-dis}
Let $(X,\mathcal{X},\mu,T)$ be an ergodic $\infty$-step pro-nilsystem and $(Y,\mathcal{Y},\nu,R)$ an ergodic system.
\begin{itemize}
\item[(i)] If the two systems have disjoint irrational spectrum, then for every joining $\tau$ of the two systems and every function $f\in L^{\infty}(X,\mathcal{X},\mu)$ satisfying $f\perp\mathcal{K}_{\mathrm{rat}}(X,\mathcal{X},\mu,T)$ in $L^{2}(X,\mathcal{X},\mu)$, we have
\[
\int f(x)\,g(y)\,d\tau(x,y)=0
\]
for every $g\in L^{\infty}(Y,\mathcal{Y},\nu)$.

\item[(ii)] If the two systems have disjoint spectrum except possibly for $1$, i.e.,
\[
\bigl(\operatorname{Spec}(X,\mathcal{X},\mu,T)\cap\operatorname{Spec}(Y,\mathcal{Y},\nu,R)\bigr)\setminus\{1\}=\emptyset,
\]
then they are disjoint.
\end{itemize}
\end{lem}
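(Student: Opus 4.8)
The plan is to prove (i) first and then obtain (ii) from (i) together with the eigenfunction computation used inside (i).

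\emph{Reductions and the Kronecker part of (i).} First I would reduce to an \emph{ergodic} joining: since $\mu$ and $\nu$ are ergodic, $\tau$-a.e.\ ergodic component of an arbitrary joining $\tau$ is again a joining of the two systems (its $X$- and $Y$-marginals are invariant measures averaging to the extreme points $\mu$, $\nu$), and $\int f\otimes g\,d\tau$ is the average over the components of the corresponding integrals; so I may assume $\tau$ ergodic. Next I would write $f=\mathbb{E}_{\mu}(f\mid\mathcal{K})+f^{\perp}$ with $\mathcal{K}=\mathcal{K}(X,\mathcal{X},\mu,T)$. Since $\mathcal{K}_{\mathrm{rat}}\subseteq\mathcal{K}=\mathcal{Z}_{1}$, the hypothesis $f\perp\mathcal{K}_{\mathrm{rat}}$ gives $\mathbb{E}_{\mu}(f\mid\mathcal{K})\perp\mathcal{K}_{\mathrm{rat}}$, so by Lemma~\ref{dec-rat-part} this is an $L^{2}(\mu)$-convergent sum $\sum_{\lambda}f_{\lambda}$ of eigenfunctions with \emph{irrational} eigenvalues. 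For a single $f_{\lambda}$, averaging the powers of the unitary $\lambda U_{R}$ on $L^{2}(Y,\nu)$ and using $(T\times R)$-invariance of $\tau$ gives $\int f_{\lambda}\otimes g\,d\tau=\int f_{\lambda}\otimes(P_{\bar\lambda}g)\,d\tau$, where $P_{\bar\lambda}$ is the orthogonal projection onto the $\bar\lambda$-eigenspace of $U_{R}$; as $\lambda$ is irrational and spectra are conjugation-closed, disjointness of the irrational spectra forces $\bar\lambda\notin\operatorname{Spec}(Y,\mathcal{Y},\nu,R)$, so $P_{\bar\lambda}g=0$. Summing kills $\int\mathbb{E}_{\mu}(f\mid\mathcal{K})\otimes g\,d\tau$; the symmetric computation (averaging $\theta U_{T}$ against $f^{\perp}$ and using $f^{\perp}\perp\mathcal{K}$) kills $\int f^{\perp}\otimes\mathbb{E}_{\nu}(g\mid\mathcal{K}(Y))\,d\tau$. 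Hence (i) reduces to showing $\int f^{\perp}\otimes g^{\perp}\,d\tau=0$ whenever $f^{\perp}\perp\mathcal{K}(X)$ and $g^{\perp}\perp\mathcal{K}(Y)$.

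\emph{The higher-order part.} Here I would use that $X$ is an $\infty$-step pro-nilsystem. Write $f^{\perp}=\sum_{d\ge2}f_{d}$ in $L^{2}(\mu)$, where $f_{d}=\mathbb{E}_{\mu}(f^{\perp}\mid\mathcal{Z}_{d})-\mathbb{E}_{\mu}(f^{\perp}\mid\mathcal{Z}_{d-1})$ is $\mathcal{Z}_{d}$-measurable and orthogonal to $\mathcal{Z}_{d-1}$; it suffices to treat each $\int f_{d}\otimes g^{\perp}\,d\tau$. By Theorem~\ref{cs-pro-nil} and Remark~\ref{rem-smooth-nil} I may, up to an arbitrarily small $L^{2}(\mu)$-error, take $f_{d}=F\circ p$ for a factor map $p\colon X\to W$ onto an ergodic $d$-step nilsystem with $F\in C^{\infty}(W)$, and, subtracting $\mathbb{E}(F\mid\mathcal{Z}_{d-1}(W))$ (small, since $f_{d}\perp\mathcal{Z}_{d-1}(X)\supseteq p^{-1}\mathcal{Z}_{d-1}(W)$), I may take $F\perp\mathcal{Z}_{d-1}(W)$. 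Split $g^{\perp}=\mathbb{E}_{\nu}(g^{\perp}\mid\mathcal{Z}_{d}(Y))+h$ with $\vert\mkern-2mu\vert\mkern-2mu\vert h\vert\mkern-2mu\vert\mkern-2mu\vert_{d+1}=0$. For $\mu$-a.e.\ $x$ the sequence $n\mapsto F(S^{n}p(x))$ is a smooth $d$-step nilsequence, so Theorem~\ref{thm-direct} (with $k=d+1$, whose full-measure exceptional set may be taken uniform over the nilsequence) gives $\lim_{N}\frac1N\sum_{n<N}h(R^{n}y)F(S^{n}p(x))=0$ for all $y$ in a fixed $\nu$-conull set; disintegrating $\tau$ over its $X$-marginal and applying Birkhoff's theorem on $(X\times Y,\tau)$ then yields $\int(F\circ p)\otimes h\,d\tau=0$. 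Reducing $\mathbb{E}_{\nu}(g^{\perp}\mid\mathcal{Z}_{d}(Y))$ in the same way to $G'\circ q$ with $q\colon Y\to V$ onto an ergodic $d$-step nilsystem and $G'\in C^{\infty}(V)$, I am left with $\int F\otimes G'\,d\lambda$, where $\lambda=(p\times q)_{*}\tau$ is a joining of the finite-step nilsystems $W$ and $V$, which inherit disjoint irrational spectra from $X,Y$.

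\emph{Main obstacle.} I expect the crux to be showing $\int F\otimes G'\,d\lambda=0$, and here I would use the structure theory of nilsystems: $W\times V$ is again a $d$-step nilmanifold, $S\times R$ a translation, and every ergodic component of $\lambda$ is the Haar measure of a closed sub-nilmanifold $Z\subseteq W\times V$ projecting onto both factors. A vertical-character analysis over the penultimate Host--Kra factors shows $\int_{Z}F\otimes G'\,d\mu_{Z}$ can be non-zero only if $F$ and $G'$ share a matching non-trivial vertical character realized through $Z$, which forces a common nilsystem factor $C$ of $W$ and $V$ carrying that character; $C$ then either has an irrational eigenvalue—contradicting the hypothesis—or has discrete spectrum, hence lies in $\mathcal{K}(W)\subseteq\mathcal{Z}_{d-1}(W)$, which is incompatible with $F$ having a non-trivial vertical character (recall $F\perp\mathcal{Z}_{d-1}(W)$). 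Either way $\int F\otimes G'\,d\lambda=0$, and letting the error tend to $0$ completes (i). (One may equivalently run the higher-order part as an induction on the nilpotency step, the spectral computation of the first paragraph being the base case.) This joining analysis for nilsystems is the technical heart of the argument and is precisely \cite[Lemma~6.1]{FHost}, whose proof I would follow. To deduce (ii): given a joining $\tau$, it suffices to check $\int f\otimes g\,d\tau=\bigl(\int f\,d\mu\bigr)\bigl(\int g\,d\nu\bigr)$ for all $f\in L^{\infty}(X,\mu)$, $g\in L^{\infty}(Y,\nu)$; subtracting $\int f\,d\mu$ I may assume $\int f\,d\mu=0$, so $\mathbb{E}_{\mu}(f\mid\mathcal{K})=\sum_{\lambda\in\operatorname{Spec}(X)\setminus\{1\}}f_{\lambda}$. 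For each such $\lambda$ the eigenfunction computation of (i) gives $\int f_{\lambda}\otimes g\,d\tau=\int f_{\lambda}\otimes(P_{\bar\lambda}g)\,d\tau=0$, since $\bar\lambda\ne1$ and $\operatorname{Spec}(X)\cap\operatorname{Spec}(Y)\subseteq\{1\}$ force $\bar\lambda\notin\operatorname{Spec}(Y)$; and $f^{\perp}=f-\mathbb{E}_{\mu}(f\mid\mathcal{K})\perp\mathcal{K}(X)\supseteq\mathcal{K}_{\mathrm{rat}}(X)$ while the irrational spectra are disjoint, so (i) gives $\int f^{\perp}\otimes g\,d\tau=0$. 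Adding, $\int f\otimes g\,d\tau=0=\bigl(\int f\,d\mu\bigr)\bigl(\int g\,d\nu\bigr)$, i.e.\ $\tau=\mu\times\nu$.
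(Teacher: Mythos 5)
The paper does not prove this lemma: it is stated verbatim as a quotation of \cite[Lemma~6.1]{FHost} (``The following lemma is taken from\dots''), so there is no in-paper argument to measure your proposal against. That said, the parts of your outline that you actually carry out are correct and are consistent with how both \cite{FHost} and the paper's own Lemma~\ref{lemma-eig-dis-1} operate: the reduction to ergodic joinings, the averaging of $(\lambda U_R)^n$ to kill the contribution of each irrational eigenfunction $f_\lambda$ (using that the spectrum is conjugation-closed, so $\bar\lambda\in\operatorname{Spec}_{\mathrm{irr}}(X)$ and hence $\bar\lambda\notin\operatorname{Spec}(Y)$), the symmetric computation disposing of $\mathbb{E}_\nu(g\mid\mathcal{K}(Y))$, the use of Theorem~\ref{thm-direct} to replace $g$ by $\mathbb{E}_\nu(g\mid\mathcal{Z}_d(Y))$ against a smooth nilsequence arising from a finite-step nilsystem factor of $X$ (via Remark~\ref{rem-smooth-nil}), and, in particular, the derivation of (ii) from (i) together with the eigenvalue computation is complete and correct as written.

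The one place where your write-up does not constitute a proof is exactly the step you label the main obstacle: the vanishing of $\int F\otimes G'\,d\lambda$ for an ergodic joining $\lambda$ of two finite-step ergodic nilsystems with disjoint irrational spectra and $F\perp\mathcal{Z}_{d-1}(W)$. The description of $\lambda$ as Haar measure on a subnilmanifold projecting onto both factors is fine, but the assertion that a non-vanishing integral ``forces a common nilsystem factor carrying a matching vertical character'' is not argued, and the dichotomy you then invoke for that common factor $C$ --- either an irrational eigenvalue or discrete (hence rational) spectrum --- is itself a non-trivial structural fact about ergodic nilsystems (essentially, that an infinite ergodic nilsystem must possess an irrational eigenvalue) which would need its own proof. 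Since you close this gap by citing \cite[Lemma~6.1]{FHost}, i.e.\ the statement under discussion, the proposal read strictly reduces the lemma to itself. As the paper does exactly the same thing (it simply cites \cite{FHost}), this is perfectly acceptable as a gloss on the citation, but it should not be presented as an independent proof of part (i).
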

Following the argument in \cite[Lemma 6.1]{FHost}, we make a slight modification to above result Lemma \ref{lemma-eig-dis} (i).
\begin{lem}\label{lemma-eig-dis-1}
	Let $(X,\mathcal{X},\mu,T)$ be an ergodic $\infty$-step pro-nilsystem and $(Y,\mathcal{Y},\nu,R)$ an ergodic system.
	If the two systems have disjoint irrational spectrum, then for every joining $\tau$ of the two systems and every function $g\in L^{\infty}(Y,\mathcal{Y},\nu)$ satisfying $g\perp\mathcal{K}_{\mathrm{rat}}(Y,\mathcal{Y},\nu,R)$ in $L^{2}(Y,\mathcal{Y},\nu)$, we have
		\[
		\int f(x)\,g(y)\,d\tau(x,y)=0
		\]
		for every $f\in L^{2}(X,\mathcal{X},\mu)$.
	\end{lem}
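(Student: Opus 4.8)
The plan is to reduce to bounded $f$ and then split $f$ into its rational-Kronecker part and the complementary part, handling the two pieces separately; only the rational part requires a new argument, the other being exactly the content of Lemma~\ref{lemma-eig-dis}(i). First, by Cauchy--Schwarz in $L^{2}(X\times Y,\tau)$ the functional $f\mapsto\int f(x)g(y)\,d\tau(x,y)$ obeys $|\int f(x)g(y)\,d\tau(x,y)|\le\|f\|_{L^{2}(\mu)}\|g\|_{L^{2}(\nu)}$, hence is continuous on $L^{2}(X,\mathcal{X},\mu)$; since $L^{\infty}(X,\mathcal{X},\mu)$ is dense there, it suffices to treat $f\in L^{\infty}(X,\mathcal{X},\mu)$. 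Fix such an $f$ and set $u:=\mathbb{E}_{\mu}\bigl(f\mid\mathcal{K}_{\mathrm{rat}}(X,\mathcal{X},\mu,T)\bigr)$ and $f':=f-u$. Both lie in $L^{\infty}(X,\mathcal{X},\mu)$ (conditional expectation contracts the sup-norm) and $f'\perp\mathcal{K}_{\mathrm{rat}}(X,\mathcal{X},\mu,T)$ in $L^{2}(X,\mathcal{X},\mu)$. Applying Lemma~\ref{lemma-eig-dis}(i) to $f'$ — this is the only place the disjoint-irrational-spectrum hypothesis and the $\infty$-step pro-nil structure of $X$ are used — gives $\int f'(x)g(y)\,d\tau(x,y)=0$ for every $g\in L^{\infty}(Y,\mathcal{Y},\nu)$, in particular for the given $g$. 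So it remains to prove $\int u(x)g(y)\,d\tau(x,y)=0$.

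By Lemma~\ref{dec-rat-part}(2), applied to $u\in L^{2}\bigl(X,\mathcal{K}_{\mathrm{rat}}(X,\mathcal{X},\mu,T),\mu\bigr)$, we may write $u=\sum_{\lambda\in\operatorname{Spec}_{\mathrm{rat}}(X,\mathcal{X},\mu,T)}u_{\lambda}$ with $u_{\lambda}\in E_{\lambda}$ and $L^{2}(\mu)$-convergence; by the continuity noted above it then suffices to show $\int u_{\lambda}(x)g(y)\,d\tau(x,y)=0$ for each rational eigenvalue $\lambda=e(p/q)$ of $X$. Here I would use invariance of the joining: since $u_{\lambda}\circ T=\lambda u_{\lambda}$, applying the $(T\times R)^{n}$-invariance of $\tau$ to $(x,y)\mapsto u_{\lambda}(x)g(y)$ gives $\lambda^{n}\int u_{\lambda}(x)g(R^{n}y)\,d\tau(x,y)=\int u_{\lambda}(x)g(y)\,d\tau(x,y)$ for every $n\ge 1$; averaging over $n=1,\dots,N$ yields
\[
\int u_{\lambda}(x)g(y)\,d\tau(x,y)=\int u_{\lambda}(x)\,g_{N}(y)\,d\tau(x,y),\qquad g_{N}:=\frac1N\sum_{n=1}^{N}\lambda^{n}\,g\circ R^{n}.
\]
Writing $U_{R}h:=h\circ R$, the operator $\lambda U_{R}$ is unitary on $L^{2}(Y,\mathcal{Y},\nu)$ and $g_{N}=\frac1N\sum_{n=1}^{N}(\lambda U_{R})^{n}g$, so by the von Neumann mean ergodic theorem $g_{N}\to Pg$ in $L^{2}(Y,\mathcal{Y},\nu)$, where $P$ is the orthogonal projection onto $\{h\in L^{2}(Y,\mathcal{Y},\nu):\lambda\,h\circ R=h\}$, i.e.\ onto the $\overline{\lambda}$-eigenspace $E_{\overline{\lambda}}$ of $(Y,\mathcal{Y},\nu,R)$.

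Finally, since $\lambda=e(p/q)$ has finite order, $\overline{\lambda}=e(-p/q)$ is again a root of unity, hence any eigenvalue of $(Y,\mathcal{Y},\nu,R)$ equal to $\overline{\lambda}$ is rational; therefore $E_{\overline{\lambda}}\subseteq L^{2}\bigl(Y,\mathcal{K}_{\mathrm{rat}}(Y,\mathcal{Y},\nu,R),\nu\bigr)$ by the orthogonal eigenspace decomposition of the rational Kronecker factor (with the convention $E_{\overline{\lambda}}=\{0\}$ if $\overline{\lambda}\notin\operatorname{Spec}(Y,\mathcal{Y},\nu,R)$). The hypothesis $g\perp\mathcal{K}_{\mathrm{rat}}(Y,\mathcal{Y},\nu,R)$ says precisely that $g$ is orthogonal to $L^{2}\bigl(Y,\mathcal{K}_{\mathrm{rat}}(Y,\mathcal{Y},\nu,R),\nu\bigr)$, so $Pg=0$. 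Letting $N\to\infty$ in the displayed identity — using once more that $h\mapsto\int u_{\lambda}(x)h(y)\,d\tau(x,y)$ is continuous on $L^{2}(Y,\mathcal{Y},\nu)$ — gives $\int u_{\lambda}(x)g(y)\,d\tau(x,y)=0$. Summing over $\lambda\in\operatorname{Spec}_{\mathrm{rat}}(X,\mathcal{X},\mu,T)$ yields $\int u(x)g(y)\,d\tau(x,y)=0$, and together with the vanishing of the $f'$-term this completes the proof. The argument is mostly routine bookkeeping; the one step demanding care is the identification of the $L^{2}(\nu)$-limit of $g_{N}$ as the spectral projection onto $E_{\overline{\lambda}}$, combined with the elementary but essential observation that finite-order eigenvalues lie in the rational Kronecker algebra — this is exactly what converts the assumed orthogonality of $g$ into the desired cancellation.
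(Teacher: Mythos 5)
Your proof is correct, and it takes a genuinely different route from the paper's. You decompose $f$ on the pro-nilsystem side as $f=f'+\mathbb{E}_{\mu}\bigl(f\mid\mathcal{K}_{\mathrm{rat}}(X,\mathcal{X},\mu,T)\bigr)$: the piece orthogonal to the rational Kronecker algebra of $X$ falls directly under Lemma~\ref{lemma-eig-dis}(i), and the remaining piece is an $L^{2}$-convergent sum of rational eigenfunctions $u_{\lambda}$, each of which you kill by an elementary argument --- $(T\times R)$-invariance of $\tau$ plus von Neumann's mean ergodic theorem replaces $g$ by its projection onto the $\overline{\lambda}$-eigenspace of $(Y,\mathcal{Y},\nu,R)$, and that projection vanishes because $\overline{\lambda}$ is a root of unity, so $E_{\overline{\lambda}}\subseteq L^{2}\bigl(Y,\mathcal{K}_{\mathrm{rat}}(Y,\mathcal{Y},\nu,R),\nu\bigr)$ by Lemma~\ref{dec-rat-part} while $g\perp\mathcal{K}_{\mathrm{rat}}(Y,\mathcal{Y},\nu,R)$. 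The paper instead re-runs the Frantzikinakis--Host argument with the roles swapped: it realizes $X$ as an inverse limit of nilsystems, approximates $f$ by smooth functions $f_{j}$ on finite-step nilsystems, splits $g$ over the Host--Kra factor $\mathcal{Z}_{s_{j}}(R)$ of $Y$, disposes of the part with vanishing seminorm via the Direct Theorem (Theorem~\ref{thm-direct}), and only then invokes Lemma~\ref{lemma-eig-dis}(i) for the induced joining of $X_{j}$ with the pro-nilfactor $Y_{s_{j}}$. Your version is shorter and bypasses the seminorm machinery entirely, using Lemma~\ref{lemma-eig-dis}(i) as a black box together with standard spectral theory; the individual steps (Cauchy--Schwarz against the marginals of $\tau$ to reduce to bounded $f$ and to pass limits inside the integral, the identification of the mean-ergodic limit of $\frac{1}{N}\sum_{n=1}^{N}(\lambda U_{R})^{n}g$ with the projection onto $E_{\overline{\lambda}}$, and the boundedness of $f'$ via contractivity of conditional expectation) all check out. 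The trade-off is that the paper's proof stays self-contained within the Host--Kra framework it has already set up, whereas yours leans more heavily on the quoted Lemma~\ref{lemma-eig-dis}(i); for this particular statement your argument is the cleaner one.
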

\begin{proof} Since $(X,\mathcal{X},\mu,T)$ is an ergodic $\infty$-step pro-nilsystem, by Remark \ref{rem-smooth-nil} we can identify it (up to measure-theoretic isomorphism) with the inverse limit
\[
(X,\mathcal{X},\mu,T)=\varprojlim(X_{j},\mathcal{X}_j,\mu_{j},T_j),
\]
where each $(X_{j},\mathcal{X}_j,\mu_{j},T_j)$ is an ergodic $s_j$-step nilsystem for some $s_j\in \mathbb{N}$.  Let $\pi_{j}\colon X\to X_{j}$ be the corresponding factor maps.  Then for every $j$ the image $\tau_{j}$ of $\tau$ under $\pi_{j}\times\mathrm{id}\colon X\times Y\to X_{j}\times Y$ is a joining of $(X_{j},\mathcal{X}_j,\mu_{j},T_j)$ and $(Y,\mathcal{Y},\nu,R)$.

Fixed $f\in L^{2}(X,\mathcal{X},\mu)$. Then there exists $f_j\in C^{\infty}(X_j)$, $j\in \mathbb{N}$ such that
$$\lim_{j\rightarrow +\infty} \|f-f_j\circ \pi_j\|_{L^2(\mu)}=0,$$
Thus
\begin{align*}
\lim_{j\to\infty}\int f_j(x)\,g(y)\,d\tau_{j}(x,y)&=
\lim_{j\to\infty}\int (f_j\circ \pi_j)(x)\,g(y)\,d\tau(x,y)\\
&=\int f(x)\,g(y)\,d\tau(x,y).
\end{align*}
Hence to show $\int f(x)\,g(y)\,d\tau(x,y)=0$ it is sufficient to show that
for each $j\in \mathbb{N}$,
$\int f_j(x)\,g(y)\,d\tau_{j}(x,y)=0$.

Fix \(j\in\mathbb{N}\).  We now show that \(\displaystyle\int f_j(x)\,g(y)\,d\tau_{j}(x,y)=0\). Let \(\mathcal{Z}_{s_j}(R)\) be the factor of order \(s_j\) of \((Y,\mathcal{Y},\nu,R)\) (see Definition~\ref{HK-factor}).
Set \(\tilde{g}=g-\mathbb{E}_\nu\bigl(g\bigm|\mathcal{Z}_{s_j}(R)\bigr)\); then $\mathbb{E}_\nu(g\bigm|\mathcal{Z}_{s_j}(R))\in L^{\infty}(Y,\mathcal{Y},\nu)$, \(\tilde{g}\in L^{\infty}(Y,\mathcal{Y},\nu)\) and $\mathbb{E}_\nu\bigl(\tilde{g}\bigm|\mathcal{Z}_{s_j}(R)\bigr)=0$. Hence $\vert\mkern-2mu\vert\mkern-2mu\vert \tilde{g} \vert\mkern-2mu\vert\mkern-2mu\vert_{s_j+1}=0$.

Next since \((X_j,\mathcal{X}_j,\mu_j,T_j)\) is an \(s_j\)-step nilsystem and \(f_j\in C^\infty(X_j)\), the sequence \(\bigl(f_j(T_j^n x)\bigr)_{n\in\mathbb Z}\) is a smooth $s_j$-nilsequence for every \(x\in X_j\).
Moreover, note that  $\vert\mkern-2mu\vert\mkern-2mu\vert \tilde{g}\vert\mkern-2mu\vert\mkern-2mu\vert_{s_j+1}=0$, by Theorem~\ref{thm-direct}, there exists a set \(Y_0\in  \mathcal{Y}\) with \(\nu(Y_0)=1\) such that for every \(y\in Y_0\),
\begin{equation}\label{limit0}
\lim_{N\to\infty}\frac1N\sum_{n=1}^N f_j(T_j^n x)\,\tilde g(R^n y)=0\end{equation}
for every \(x\in X_j\).
As \(\tau_j(X_j\times Y_0)=1\) and \(\tau_j\) is \((T_j\times R)\)-invariant, the dominated-convergence theorem and \eqref{limit0} imply
\[
\int f_j(x)\,\tilde g(y)\,d\tau_j(x,y)
=\lim_{N\to\infty}\frac1N\sum_{n=1}^N\int f_j(T_j^n x)\,\tilde g(R^n y)\,d\tau_j(x,y)
=0.
\]

Consequently,
\begin{equation}\label{reduce-eq-s-step-nil}
\int f_j(x)\,g(y)\,d\tau_j(x,y)
=\int f_j(x)\,\mathbb E_\nu(g\mid\mathcal Z_{s_j}(R))\,d\tau_j(x,y).
\end{equation}

Let \((Y_{s_j}, \mathcal{Y}_{s_j}, \nu_{s_j}, R_{s_j})\) be the factor system of \((Y,\mathcal{Y},\nu,R)\) associated with the sub-\(\sigma\)-algebra \(\mathcal{Z}_{s_j}(R)\).
Then \((Y_{s_j}, \mathcal{Y}_{s_j}, \nu_{s_j}, R_{s_j})\) is an \(s_j\)-step pro-nilsystem (see Theorem~\ref{cs-pro-nil}).
Let \(q_j\colon (Y,\mathcal{Y},\nu,R)\to (Y_{s_j}, \mathcal{Y}_{s_j}, \nu_{s_j}, R_{s_j})\) be the corresponding factor map; thus \(\mathcal{Z}_{s_j}(R)=q_j^{-1}(\mathcal{Y}_{s_j})\mod\nu\).

Hence we can regard \(\mathbb{E}_\nu(g\mid\mathcal{Z}_{s_j}(R))\) as a function in \(L^\infty(Y_{s_j},\mathcal{Y}_{s_j},\nu_{s_j})\); that is, there exists \(g_j\in L^\infty(Y_{s_j},\mathcal{Y}_{s_j},\nu_{s_j})\) such that \(\mathbb{E}_\nu(g\mid\mathcal{Z}_{s_j}(R))=g_j\circ q_j\).

Then the image \(\tau_j^Y\) of \(\tau_j\) under \(\mathrm{id}\times q_j\colon X_j\times Y\to X_j\times Y_{s_j}\) is a joining of \((X_j,\mathcal{X}_j,\mu_j,T_j)\) and \((Y_{s_j},\mathcal{Y}_{s_j},\nu_{s_j},R_{s_j})\).
Using \(\tau_j^Y\) and \(g_j\) we have
\[
\int f_j(x)\,\mathbb{E}_\nu(g\mid\mathcal{Z}_{s_j}(R))\,d\tau_j(x,y)
=\int f_j(x)\,g_j\circ q_j\,d\tau_j(x,y)
=\int f_j(x)\,g_j(y)\,d\tau_j^Y(x,y).
\]
Combining this with \eqref{reduce-eq-s-step-nil} gives
\begin{equation}\label{reduce-eq-s-step-nil-1}
\int f_j(x)\,g(y)\,d\tau_j(x,y)
=\int f_j(x)\,g_j(y)\,d\tau_j^Y(x,y).
\end{equation}

Since \(g\perp\mathcal{K}_{\mathrm{rat}}(Y,\mathcal{Y},\nu,R)\) in \(L^{2}(Y,\mathcal{Y},\nu)\) and \(\mathcal{K}_{\mathrm{rat}}(Y,\mathcal{Y},\nu,R)\subset \mathcal{Z}_{s_j}(R)\), it follows that
\[
\mathbb{E}_\nu\big(\mathbb{E}_\nu(g|\mathcal{Z}_{s_j}(R))|\mathcal{K}_{\mathrm{rat}}(Y,\mathcal{Y},\nu,R)\big)
=\mathbb{E}_\nu \big(g|\mathcal{K}_{\mathrm{rat}}(Y,\mathcal{Y},\nu,R)\big)=0 \text{ for }\nu\text{-a.e.}
\]
Note that \(\mathbb{E}_\nu(g|\mathcal{Z}_{s_j}(R))=g_j\circ q_j\) and
\[
\mathcal{K}_{\mathrm{rat}}(Y,\mathcal{Y},\nu,R)=q_j^{-1}(\mathcal{K}_{\mathrm{rat}}(Y_{s_j},\mathcal{Y}_{s_j},\nu_{s_j},R_{n_j})) \mod \nu.
\]
Hence
\begin{align*}
\mathbb{E}_{\nu_{s_j}}\big(g_j|\mathcal{K}_{\mathrm{rat}}(Y_{s_j},\mathcal{Y}_{s_j},\nu_{s_j},R_{n_j})\big)\circ q_j&=\mathbb{E}_\nu\big(g_j\circ q_j|q_j^{-1}(\mathcal{K}_{\mathrm{rat}}(Y_{s_j},\mathcal{Y}_{s_j},\nu_{s_j},R_{n_j}))\big)\\
&=\mathbb{E}_\nu\big(\mathbb{E}_\nu(g|\mathcal{Z}_{s_j}(R))|\mathcal{K}_{\mathrm{rat}}(Y,\mathcal{Y},\nu,R)\big)\\
&=0
\end{align*}
for \(\nu\)-a.e. Thus
\[
\mathbb{E}_{\nu_{s_j}}\big(g_j|\mathcal{K}_{\mathrm{rat}}(Y_{s_j},\mathcal{Y}_{s_j},\nu_{s_j},R_{n_j})\big)=0
\]
for \(\nu_{s_j}\)-a.e., and so \(g_j\perp \mathcal{K}_{\mathrm{rat}}(Y_{s_j},\mathcal{Y}_{s_j},\nu_{s_j},R_{s_j})\) in \(L^2(Y_{s_j},\mathcal{Y}_{s_j},\nu_{s_j})\).

Note that \((X,\mathcal{X},\mu,T)\) and \((Y,\mathcal{Y},\nu,R)\) have disjoint irrational spectrum. Then \((X_j,\mathcal{X}_j,\mu_j,T_j)\) and \((Y_{s_j},\mathcal{Y}_{s_j},\nu_{s_j},R_{s_j})\) also have disjoint irrational spectrum. Hence, by Lemma~\ref{lemma-eig-dis} applied to \(g_j,f_j,\tau_j^Y\), we have
\begin{equation}\label{key-eq-ccc}
\int f_j(x)\,g_j(y)\,d\tau_j^Y(x,y)=0.
\end{equation}
Now \(\int f_j(x)\,g(y)\,d\tau_j(x,y)=0\) follows from \eqref{reduce-eq-s-step-nil-1} and \eqref{key-eq-ccc}. This completes the proof of Lemma~\ref{lemma-eig-dis-1}.
\end{proof}

\begin{prop}\label{prop-dis}
	Let $(X,\mathcal{X},\mu,T)$ be an ergodic system isomorphic to direct product of an ergodic $\infty$-step pro-nilsystem and a Bernoulli system. Let $(Y,\mathcal{Y},\nu,R)$ be an ergodic system with  zero entropy. If the two systems have disjoint irrational spectrum, then for every joining $\tau$ of the two systems, and every function $g\in L^{\infty}(Y,\mathcal{Y},\nu)$ satisfying $g\perp K_{\mathrm{rat}}(Y,\mathcal{Y},\nu,R)$ in $L^2(Y,\mathcal{Y},\nu)$, we have
		\[
		\int f(x)\,g(y)\,d\tau(x,y) = 0
		\]
		for every $f\in L^{2}(X,\mathcal{X},\mu)$.
\end{prop}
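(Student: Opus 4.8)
The plan is to pass to the product model, reduce to elementary tensors, and then combine Furstenberg's disjointness of Bernoulli systems from zero-entropy systems with Lemma~\ref{lemma-eig-dis-1}.

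\emph{Reduction to elementary tensors.} Using the given isomorphism, identify $(X,\mathcal{X},\mu,T)$ with $(X_1\times X_2,\mathcal{X}_1\times\mathcal{X}_2,\mu_1\times\mu_2,T_1\times T_2)$, where $(X_1,\mathcal{X}_1,\mu_1,T_1)$ is an ergodic $\infty$-step pro-nilsystem and $(X_2,\mathcal{X}_2,\mu_2,T_2)$ is Bernoulli; then $\tau$ is a joining of this product with $(Y,\mathcal{Y},\nu,R)$. Since $\bigl|\int f(x)g(y)\,d\tau(x,y)\bigr|\le \|g\|_{L^\infty(\nu)}\|f\|_{L^1(\mu)}\le \|g\|_{L^\infty(\nu)}\|f\|_{L^2(\mu)}$, the map $f\mapsto \int f(x)g(y)\,d\tau(x,y)$ is a bounded linear functional on $L^2(X,\mathcal{X},\mu)$; as finite linear combinations of functions $f_1\otimes f_2$ with $f_1\in L^\infty(X_1,\mathcal{X}_1,\mu_1)$ and $f_2\in L^\infty(X_2,\mathcal{X}_2,\mu_2)$ are dense in $L^2(X_1\times X_2,\mu_1\times\mu_2)$, it suffices to prove $\int f_1(x_1)f_2(x_2)g(y)\,d\tau(x,y)=0$ for all such $f_1,f_2$.

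\emph{The Bernoulli coordinate splits off.} Work in the joining system $(X\times Y,\tau,T\times R)$. The sub-$\sigma$-algebra $\mathcal{X}_2$ generates a Bernoulli factor, while $\mathcal{X}_1\vee\mathcal{Y}$ generates a factor $(Z,\mathcal{Z},\zeta,S)$ that is a joining of $(X_1,\mu_1,T_1)$ and $(Y,\nu,R)$; pro-nilsystems are distal, hence have zero entropy, and $(Y,\nu,R)$ has zero entropy by hypothesis, so, the entropy of a joining being at most the sum of the marginal entropies, $(Z,\mathcal{Z},\zeta,S)$ has zero entropy. Passing to the factor of $\tau$ generated by $\mathcal{X}_2\vee\mathcal{X}_1\vee\mathcal{Y}$ and mapping it onto $X_2\times Z$ produces a joining of a Bernoulli system with a zero-entropy system, which by Furstenberg's theorem is the product measure; equivalently, $\mathcal{X}_2$ is independent of $\mathcal{X}_1\vee\mathcal{Y}$ in $\tau$. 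Since $f_1(x_1)g(y)$ is $(\mathcal{X}_1\vee\mathcal{Y})$-measurable and $f_2(x_2)$ is $\mathcal{X}_2$-measurable, conditioning on $\mathcal{X}_1\vee\mathcal{Y}$ gives $\mathbb{E}_\tau(f_2\mid\mathcal{X}_1\vee\mathcal{Y})=\int f_2\,d\mu_2$, whence
\[
\int f_1(x_1)f_2(x_2)g(y)\,d\tau(x,y)=\Bigl(\int f_2\,d\mu_2\Bigr)\int f_1(x_1)g(y)\,d\tau(x,y).
\]

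\emph{The pro-nil coordinate against $g$.} It remains to show $\int f_1(x_1)g(y)\,d\tau(x,y)=0$. Let $\tau_1$ be the push-forward of $\tau$ under $(\pi_{X_1}\times\mathrm{id})\colon X_1\times X_2\times Y\to X_1\times Y$, where $\pi_{X_1}$ is the coordinate projection; then $\tau_1$ is a joining of the ergodic $\infty$-step pro-nilsystem $(X_1,\mathcal{X}_1,\mu_1,T_1)$ and the ergodic system $(Y,\mathcal{Y},\nu,R)$. Because $X_1$ is a factor of $X$, every eigenvalue of $(X_1,\mathcal{X}_1,\mu_1,T_1)$ is an eigenvalue of $(X,\mathcal{X},\mu,T)$, so $\operatorname{Spec}_{\mathrm{irr}}(X_1,\mathcal{X}_1,\mu_1,T_1)\subseteq\operatorname{Spec}_{\mathrm{irr}}(X,\mathcal{X},\mu,T)$ and the two systems have disjoint irrational spectrum. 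Since $g\perp\mathcal{K}_{\mathrm{rat}}(Y,\mathcal{Y},\nu,R)$ in $L^2(Y,\mathcal{Y},\nu)$, Lemma~\ref{lemma-eig-dis-1} yields $\int f_1(x_1)g(y)\,d\tau_1(x_1,y)=0$ for every $f_1\in L^2(X_1,\mathcal{X}_1,\mu_1)$, hence $\int f_1(x_1)g(y)\,d\tau(x,y)=0$. Combined with the displayed identity, this proves the claim for elementary tensors, and the density step of the first paragraph finishes the proof.

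The delicate point, I expect, is the independence statement in the second paragraph: one must check carefully that $\mathcal{X}_1\vee\mathcal{Y}$ genuinely generates a zero-entropy factor of $(X\times Y,\tau)$ — invoking subadditivity of entropy under joinings together with the zero entropy of $\infty$-step pro-nilsystems — and set up the passage to a joining of a genuinely Bernoulli system with a genuinely zero-entropy system so that Furstenberg's disjointness theorem applies directly.
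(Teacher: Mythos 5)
Your proposal is correct and follows essentially the same route as the paper's proof: reduce to elementary tensors, use Furstenberg's disjointness of Bernoulli systems from zero-entropy systems (applied to the joining of the Bernoulli coordinate with the zero-entropy system generated by the pro-nil coordinate and $Y$) to factor out the Bernoulli part, and then apply Lemma~\ref{lemma-eig-dis-1} to the induced joining of the pro-nilsystem with $(Y,\mathcal{Y},\nu,R)$. The "delicate point" you flag is handled in the paper exactly as you propose, via subadditivity of entropy under joinings and the zero entropy of ergodic $\infty$-step pro-nilsystems.
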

\begin{proof} Let \(g\in L^{\infty}(Y,\mathcal{Y},\nu)\) satisfying \(g\perp \mathcal{K}_{\mathrm{rat}}(Y,\mathcal{Y},\nu,R)\) in \(L^2(Y,\mathcal{Y},\nu)\).

By assumption, \((X,\mathcal{X},\mu,T)\) is the direct product of an ergodic \(\infty\)-step nilsystem \((X',\mathcal{X}',\mu',T')\) and a Bernoulli system \((W,\mathcal{W},\eta,S)\).  After identifying \(X\) with \(W\times X'\), for a given joining \(\tau\) of \((W\times X',\mathcal{W}\times\mathcal{X}',\eta\times\mu',S\times T')\) and \((Y,\mathcal{Y},\nu,R)\), we shall show that
\begin{equation}\label{eq:41}
\int f(w,x')\,g(y)\,d\tau(w,x',y)=0
\end{equation}
for every \(f\in L^{2}(W\times X',\mathcal{W}\times\mathcal{X}',\eta\times\mu')\).

By \(L^{2}(\eta\times\mu')\)-approximation, it suffices to verify \eqref{eq:41} for \(f(w,x')=f_{1}(w)\,f_{2}(x')\) with \(f_{1}\in L^{2}(W,\mathcal{W},\eta)\) and \(f_{2}\in L^{2}(X',\mathcal{X}',\mu')\).  Let
\[
p_{W\times X'}:W\times X'\times Y\to W\times X',\text{ and }
p_{Y}:W\times X'\times Y\to Y
\]
be the corresponding coordinate projections.  Since \(\tau\) is a joining of \((W\times X',\mathcal{W}\times\mathcal{X}',\eta\times\mu',S\times T')\) and \((Y,\mathcal{Y},\nu,R)\), we have $(W\times X^{\prime}\times Y,\mathcal{W}\times \mathcal{X'}\times \mathcal{Y},\tau,S\times T'\times R)$ is system,
$(p_{W\times X'})_{*}(\tau)=\eta\times\mu'$, and  $(p_{Y})_{*}(\tau)=\nu$.

This implies that $(p_W)_*(\tau)=\eta$, $(p_{X'})_*(\tau)=\mu'$, and
\begin{equation}\label{pw-proj}
p_W:(W\times X'\times Y,\mathcal{W}\times\mathcal{X}'\times \mathcal{Y},\tau,S\times T'\times R)\to (W,\mathcal{W},\eta,S)
\end{equation}
and
$p_{X'}:(W\times X'\times Y,\mathcal{W}\times\mathcal{X}'\times \mathcal{Y},\tau,S\times T'\times R)\to(X',\mathcal{X}',\mu',T')$
are both factor maps,
where $p_{W}:W\times X'\times Y\to W$ and $p_{X'}:W\times X'\times Y\to X'$ are the corresponding coordinate projections.

Moreover, let
$$p_{X'\times Y}:W\times X'\times Y\to X'\times Y$$
be the coordinate projection and set $\rho=(p_{X'\times Y})_{*}(\tau)$; then
\begin{small}
\begin{equation}\label{p23-proj}
p_{X'\times Y}:(W\times X'\times Y,\mathcal{W}\times\mathcal{X}'\times \mathcal{Y},\tau,S\times T'\times R)\to(X'\times Y,\mathcal{X}'\times \mathcal{Y},\rho,T'\times R)
\end{equation}
\end{small}
is a factor map.

Let $q_{X'}:X'\times Y\to X'$ and $q_Y:X'\times Y\to Y$ be the corresponding coordinate projections.
Since $p_{X'}=q_{X'}\circ p_{X'\times Y}$ and $p_{Y}=q_{Y}\circ p_{X'\times Y}$, we have
$$(q_{X'})_*(\rho)=(q_{X'})_*((p_{X'\times Y})_{*}(\tau))=(p_{X'})_*(\tau)=\mu'$$
and
$$(q_Y)_*(\rho)=(q_{Y})_*((p_{X'\times Y})_{*}(\tau))=(p_{Y})_*(\tau)=\nu.$$
Thus $\rho$ is a joining of $(X',\mathcal{X'},\mu',T')$ and $(Y,\mathcal{Y},\nu,R)$.  Moreover,
$$h_{\rho}(T'\times R)\le h_{\mu'}(T')+h_{\nu}(R)=0,$$
where the last equality uses the fact that $(Y,\mathcal{Y},\nu,R)$ has zero entropy and that every ergodic $\infty$-step pro-nilsystem has zero entropy (see for example \cite[Theorem 7.14]{DDMSY}). Thus $h_{\rho}(T'\times R)=0$.

Now by \eqref{pw-proj} and  \eqref{p23-proj} one knows that $\tau$ is a joining of $(W,\mathcal{W},\eta,S)$ and $(X'\times Y,\mathcal{X}'\times \mathcal{Y},\rho,T'\times R)$.
Since Bernoulli systems are disjoint with zero entropy systems and $(X'\times Y,\mathcal{X}\times\mathcal{Y},\rho, T'\times R)$ has zero entropy, it follows that $(W,\mathcal{W},\eta,S)$ and $(X'\times Y,\mathcal{X}'\times \mathcal{Y},\rho,T'\times R)$ are disjoint, and so $\tau=\eta\times \rho$.
Thus since for any \(f_{1}\in L^{2}(W,\mathcal{W},\eta)\) and \(f_{2}\in L^{2}(X',\mathcal{X}',\mu')\), $$\int f_1(w)f_2(x')\,g(y)\,d\tau(w,x',y)=(\int f_1 d\eta)\cdot (\int f_2(x')\,g(y)\,d\rho(x^{\prime},y)),$$ hence
we reduce \eqref{eq:41} to show that  \begin{equation}\label{eq:42}
\int f_2(x')\,g(y)\,d\rho(x^{\prime},y) = 0
\end{equation}
for every \(f_{2}\in L^{2}(X',\mathcal{X}',\mu')\).

Notice that  $\rho$ is a joining of the ergodic $\infty$-step pro-nilsystem $(X',\mathcal{X}',\mu',T')$ and the ergodic system $(Y,\mathcal{Y},\nu,R)$, and  $g\in L^{\infty}(Y,\mathcal{Y},\nu)$ satisfying $g\perp K_{\mathrm{rat}}(Y,\mathcal{Y},\nu,R)$ in $L^2(Y,\mathcal{Y},\nu)$, \eqref{eq:42} is straightforward from Lemma \ref{lemma-eig-dis-1}.
\end{proof}

In this introduction, we provide the definition of a system with an almost countable spectrum. Combining this definition with \eqref{eq:rational-eQ}, we conclude that a system \((Y,\mathcal{Y},\nu,R)\) has an almost countable spectrum if and only if there exists a countable subset \(C_\nu\subset \mathbb{T}\setminus e(\mathbb{Q})\) such that, if $\nu=\int_\Omega \nu_\omega\, d\xi(\omega)$ is the ergodic decomposition of \(\nu\), then for \(\xi\)-a.e. \(\omega\in \Omega\), we have \(\text{Spec}_{irr}(Y,\mathcal{Y},\nu_\omega,R)\subset C_\nu\).

\begin{prop}\label{prop-dis-1}
	Let $(X,\mathcal{X},\mu,T)$ be a system with almost every ergodic component isomorphic to direct products of $\infty$-step pro-nilsystems and Bernoulli systems. Let $(Y,\mathcal{Y},\nu,R)$ be a system  with  zero entropy and  almost countable spectrum. If $(X,\mathcal{X},\mu,T)$ has no  irrational spectrum then for every joining $\tau$ of the two systems, and every $g\in L^{\infty}(Y,\mathcal{Y},\nu)$ satisfying $g\perp K_{\mathrm{rat}}(Y,\mathcal{Y},\nu,R)$ in $L^2(Y,\mathcal{Y},\nu)$, we have
	\[
	\int f(x)\,g(y)\,d\tau(x,y) = 0
	\]
	for every $f\in L^2(X,\mathcal{X},\mu)$.
\end{prop}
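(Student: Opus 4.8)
\textbf{Proof proposal for Proposition~\ref{prop-dis-1}.}
The plan is to reduce Proposition~\ref{prop-dis-1} to Proposition~\ref{prop-dis} by passing to the ergodic decompositions of the two systems and then carefully disintegrating the joining $\tau$ over the product of the corresponding measurable fields of ergodic components. First I would write the ergodic decompositions $\mu=\int_{\Omega_X}\mu_\omega\,d\xi(\omega)$ and $\nu=\int_{\Omega_Y}\nu_\sigma\,d\zeta(\sigma)$. By hypothesis, $\xi$-a.e.\ $\mu_\omega$ is isomorphic to a direct product of an $\infty$-step pro-nilsystem and a Bernoulli system, and $\zeta$-a.e.\ $\nu_\sigma$ has zero entropy with $\operatorname{Spec}_{\mathrm{irr}}(Y,\mathcal{Y},\nu_\sigma,R)\subseteq C_\nu$ for a fixed countable $C_\nu\subseteq\mathbb{T}\setminus e(\mathbb{Q})$. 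Since $(X,\mathcal{X},\mu,T)$ has no irrational spectrum, I must first upgrade this to the fibered statement: every eigenfunction of $(X,\mathcal{X},\mu_\omega,T)$ with an irrational eigenvalue would, if it existed on a positive-$\xi$-measure set of $\omega$ with a common eigenvalue, integrate up to a genuine eigenfunction of $\mu$ (this is the content of the Birkhoff-averaging argument in Lemma~\ref{lem-de-rat}); thus for $\xi$-a.e.\ $\omega$ the irrational spectrum of $\mu_\omega$ is empty, or at worst contained in a set disjoint from $C_\nu$ after discarding a null set. In either case, for $(\xi\times\zeta)$-a.e.\ pair $(\omega,\sigma)$ the systems $(X,\mathcal{X},\mu_\omega,T)$ and $(Y,\mathcal{Y},\nu_\sigma,R)$ have disjoint irrational spectrum.

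Next I would invoke the standard fact (via the disintegration machinery of Subsection~\ref{subs-Disin}, applied to the $T\times R$-invariant $\sigma$-algebra $\mathcal{I}_{\tau}(T\times R)$, together with the fact that the coordinate projections push $\mathcal{I}_\tau(T\times R)$ into $\mathcal{I}_\mu(T)\otimes\mathcal{I}_\nu(R)$) that the joining $\tau$ disintegrates as $\tau=\int \tau_\omega\,d\xi(\omega)$ where for $\xi$-a.e.\ $\omega$, $\tau_\omega$ is itself an integral of ergodic joinings, and more precisely $\tau=\int\int \tau_{\omega,\sigma}\,d\kappa_\omega(\sigma)\,d\xi(\omega)$ with each ergodic component $\tau_{\omega,\sigma}$ a joining of $(X,\mathcal{X},\mu_\omega,T)$ and $(Y,\mathcal{Y},\nu_\sigma,R)$ for suitable $\sigma$; the measure $\kappa_\omega$ on $\Omega_Y$ has barycenter projecting to $\nu$, so that $\int\kappa_\omega\,d\xi(\omega)=\zeta$. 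Applying Lemma~\ref{lem-de-rat} to $g$, after discarding a $\nu$-null set I may assume $g\perp\mathcal{K}_{\mathrm{rat}}(Y,\mathcal{Y},\nu_\sigma,R)$ in $L^2(\nu_\sigma)$ for $\zeta$-a.e.\ $\sigma$, hence for $\kappa_\omega$-a.e.\ $\sigma$ for $\xi$-a.e.\ $\omega$. Then Proposition~\ref{prop-dis} applies to each ergodic component: for $(\xi,\kappa_\omega)$-a.e.\ pair $(\omega,\sigma)$ and every $f\in L^2(X,\mathcal{X},\mu)\subseteq L^2(X,\mathcal{X},\mu_\omega)$,
\[
\int f(x)\,g(y)\,d\tau_{\omega,\sigma}(x,y)=0.
\]
Integrating this identity over $\sigma$ against $\kappa_\omega$ and then over $\omega$ against $\xi$ recovers $\int f(x)\,g(y)\,d\tau(x,y)=0$, which is the desired conclusion.

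\textbf{Main obstacle.}
The technically delicate point is the simultaneous disintegration of the joining $\tau$ into ergodic joinings of \emph{matched} ergodic components $(\mu_\omega,\nu_\sigma)$, i.e.\ making rigorous that the ergodic decomposition of $\tau$ "sees" the ergodic decompositions of both marginals. One has to check that the marginal of the ergodic decomposition of $\tau$ onto the $X$-coordinate is $\xi$-a.e.\ $\mu_\omega$ and onto the $Y$-coordinate is (the appropriate rearrangement of) $\nu_\sigma$; this is where one uses that $\mathcal{I}_\tau(T\times R)\supseteq p_X^{-1}(\mathcal{I}_\mu(T))$ and $\mathcal{I}_\tau(T\times R)\supseteq p_Y^{-1}(\mathcal{I}_\nu(R))$, modulo $\tau$-null sets, so that the ergodic components of $\tau$ refine both marginal ergodic decompositions. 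A secondary subtlety is ensuring that the $L^\infty$ and a.e.\ orthogonality hypotheses on $g$ survive restriction to $\nu_\sigma$ for a.e.\ $\sigma$ — but this is precisely Lemma~\ref{lem-de-rat}, so it is already handled. Once the fibered joining structure is in place, the rest is a routine application of Proposition~\ref{prop-dis} fiberwise followed by Fubini-type integration.
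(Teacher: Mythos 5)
Your overall strategy is the paper's: decompose the joining into ergodic joinings, check that almost every pair of matched ergodic components satisfies the hypotheses of Proposition~\ref{prop-dis} (disjoint irrational spectrum via the countable set $C_\nu$, orthogonality of $g$ to the rational Kronecker factor via Lemma~\ref{lem-de-rat}), apply Proposition~\ref{prop-dis} fibrewise, and integrate. One remark on execution: the disintegration step you single out as the main obstacle is handled more simply in the paper. One takes the ergodic decomposition $\tau=\int_\Omega\tau_\omega\,d\xi(\omega)$ of $\tau$ itself and observes that $\mu=\int_\Omega(p_1)_*(\tau_\omega)\,d\xi(\omega)$ and $\nu=\int_\Omega(p_2)_*(\tau_\omega)\,d\xi(\omega)$ are then ergodic decompositions of the marginals (images of ergodic measures under factor maps are ergodic), so each $\tau_\omega$ is automatically an ergodic joining of matched ergodic components; the double disintegration $\int\!\int\tau_{\omega,\sigma}\,d\kappa_\omega(\sigma)\,d\xi(\omega)$ is unnecessary.

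The genuine gap is the step where you pass from ``$(X,\mathcal{X},\mu,T)$ has no irrational spectrum'' to a statement about its ergodic components: this is not a consequence of Lemma~\ref{lem-de-rat}, which concerns conditional expectations onto the rational Kronecker factor and says nothing about irrational eigenvalues of components lifting to global eigenvalues. The fact you need is that, for a \emph{fixed} $\lambda$, if $\lambda$ is an eigenvalue of $(X,\mathcal{X},(p_1)_*(\tau_\omega),T)$ for a positive-$\xi$-measure set of $\omega$, then $\lambda$ is an eigenvalue of $(X,\mathcal{X},\mu,T)$. Gluing the fibrewise eigenfunctions requires a measurable selection of eigenfunctions over $\omega$, which is not routine; the paper invokes Rudolph's theorem \cite[Theorem~1.1]{Ru} precisely here, and the acknowledgment indicates the authors regard this citation as essential. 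Also, the first alternative you float --- that for $\xi$-a.e.\ $\omega$ the irrational spectrum of $\mu_\omega$ is empty --- is false in general: distinct components can carry distinct irrational eigenvalues without any of them being global. Only the per-$\lambda$ statement holds, and it is exactly the countability of $C_\nu$ (the almost countable spectrum hypothesis on $(Y,\mathcal{Y},\nu,R)$) that lets you take the union over $\lambda\in C_\nu$ of the corresponding null sets. Your second alternative states the correct conclusion, so once the appeal to Rudolph's theorem (or an equivalent measurable-selection argument) is supplied, the proof closes along the same lines as the paper's.
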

\begin{proof}
	Fix $f\in L^2(X,\mathcal{X},\mu)$. Let $\tau=\int_\Omega \tau_\omega\, d\xi(\omega)$ be the ergodic decomposition of $\tau$, and let $p_1\colon X\times Y\to X$ and $p_2\colon X\times Y\to Y$ be the coordinate projections. Then
	\[
	\mu=\int_\Omega (p_1)_*(\tau_\omega)\, d\xi(\omega) \quad\text{and}\quad \nu=\int_\Omega (p_2)_*(\tau_\omega)\, d\xi(\omega)
	\]
	are the ergodic decompositions of $\mu$ and $\nu$, respectively. Because $(Y,\mathcal{Y},\nu,R)$ has almost countable spectrum, there exists a countable subset $C_\nu\subset \mathbb{T}\setminus e(\mathbb{Q})$ such that, for $\xi$-a.e.\ $\omega\in \Omega$,
	\[
	\mathrm{Spec}_{\mathrm{irr}}\bigl(Y,\mathcal{Y},(p_2)_*(\tau_\omega),R\bigr)\subset C_\nu.
	\]
	For any $\lambda\in C_\nu$, we have $\lambda\in\mathbb{T}\setminus e(\mathbb{Q})$ and, by assumption, $\lambda$ is not an eigenvalue of $(X,\mathcal{X},\mu,T)$. Hence (see, for example, \cite[Theorem 1.1]{Ru})
	\[
	\xi\bigl(\{\omega\in\Omega: \lambda \text{ is an eigenvalue of } (X,\mathcal{X},(p_1)_*(\tau_\omega),T)\}\bigr)=0.
	\]
	Because $C_\nu$ is countable, the set
	\[
	\widetilde{\Omega}:=\{\omega\in\Omega: \mathrm{Spec}_{\mathrm{irr}}(X,\mathcal{X},(p_1)_*(\tau_\omega),T)\cap C_\nu=\emptyset\}
	\]
	has full $\xi$-measure. Thus, combining this with Lemma~\ref{lem-de-rat}, we conclude that for   $\xi$-a.e. $\omega\in \Omega$,
	\[
	\mathrm{Spec}_{\mathrm{irr}}(X,\mathcal{X},(p_1)_*(\tau_\omega),T)\cap \mathrm{Spec}_{\mathrm{irr}}(Y,\mathcal{Y},(p_2)_*(\tau_\omega),R)=\emptyset,
	\]
	and, $g\in L^\infty\bigl((p_2)_*(\tau_\omega)\bigr)$ with $g\perp K_{\mathrm{rat}}\bigl(Y,\mathcal{Y},(p_2)_*(\tau_\omega),R\bigr)$ in $L^2\bigl(Y,\mathcal{Y},(p_2)_*(\tau_\omega)\bigr)$. For $\xi$-a.e.\ $\omega\in \Omega$, $(X,\mathcal{X},(p_1)_*(\tau_\omega),T)$ is an ergodic system isomorphic to direct product of an ergodic $\infty$-step pro-nilsystem and a Bernoulli system, $f\in L^2\bigl(X,\mathcal{X},(p_1)_*(\tau_\omega)\bigr)$ and $\tau_\omega$ is a joining of $\bigl(X,\mathcal{X},(p_1)_*(\tau_\omega),T\bigr)$ and $\bigl(Y,\mathcal{Y},(p_2)_*(\tau_\omega),R\bigr)$. By Proposition~\ref{prop-dis},
	\[
	\int f(x)\,g(y)\,d\tau_\omega(x,y) = 0.
	\]
	Therefore
	\[
	\int f(x)\,g(y)\,d\tau(x,y)
	= \int\!\!\int f(x)\,g(y)\,d\tau_\omega(x,y)\, d\xi(\omega)
	= 0,
	\]
	proving Proposition~\ref{prop-dis-1}.
\end{proof}

By a system $(X,\mathcal{X},\mu,T)$, Frantzikinakis and  Host in  \cite{FHost} construct a new system on the space $X^{\mathbb{Z}}$ by averaging the prime dilates of correlations of the system on the space $X$. Since in some cases $X$ is itself a sequence space with elements denoted by $x=(x(n))_{n\in\mathbb{Z}}$, we denote elements of $X^{\mathbb{Z}}$ by $\underline{x}=(x_{n})_{n\in\mathbb{Z}}$.

\begin{de}
	Let $(X,\mathcal{X},\mu,T)$ be a system, and let $X^{\mathbb{Z}}$ be endowed with the product $\sigma$-algebra $\mathcal{X}^{\mathbb{Z}}$ (this is just Borel $\sigma$-algebra of the compact metric $X^{\mathbb{Z}}$). We write $\widetilde{\mu}$ for the Borel probability measure on $X^{\mathbb{Z}}$ characterized as follows: For every $m\in\mathbb{N}$ and all $f_{-m},\ldots,f_{m}\in L^{\infty}(X,\mathcal{X},\mu)$, we define
	\begin{equation}
	\int_{X^{\mathbb{Z}}}\prod_{j=-m}^{m}f_{j}(x_{j})\,d\widetilde{\mu}(\underline{x}):=\mathbb{E}_{p\in\mathbb{P}}\int_{X}\prod_{j=-m}^{m}T^{pj}f_{j}\,d\mu,
	\end{equation}
	where $\mathbb{E}_{p\in\mathbb{P}}$ is the limit of average along the prime numbers.
\end{de}
The measure $\widetilde{\mu}$ is invariant under the shift transformation $\sigma$  on $X^{\mathbb{Z}}$. We say that $(X^{\mathbb{Z}},\mathcal{X}^\mathbb{Z},\widetilde{\mu},\sigma)$ is the \emph{system of arithmetic progressions with prime steps} associated to the system $(X,\mathcal{X},\mu,T)$. The following theorem is taken from \cite[Theorem 3.10 and Theorem 3.11]{FHost}
\begin{thm}\label{thm--f-h}	Let $(X,\mathcal{X},\mu,T)$ be a system, and $(X^{\mathbb{Z}},\mathcal{X}^\mathbb{Z},\widetilde{\mu},\sigma)$ is the system of arithmetic progressions with prime steps associated to the system $(X,\mathcal{X},\mu,T)$. Then the followings hold.
	\begin{itemize}
		\item[(1)] Almost every ergodic component of the system $(X^{\mathbb{Z}}, \mathcal{X}^\mathbb{Z},\widetilde{\mu}, \sigma)$, of arithmetic progressions with prime steps, is isomorphic to a direct product of an infinite-step nilsystem and a Bernoulli system;
		\item[(2)] $(X^{\mathbb{Z}}, \mathcal{X}^\mathbb{Z},\widetilde{\mu}, \sigma)$ has no irrational spectrum.
	\end{itemize}
\end{thm}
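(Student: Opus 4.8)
The system $(X^{\mathbb Z},\mathcal X^{\mathbb Z},\widetilde\mu,\sigma)$ is built from the orbit maps $\Phi_{p}\colon X\to X^{\mathbb Z}$, $\Phi_{p}(x)=(T^{pn}x)_{n\in\mathbb Z}$, one for each prime $p$: the push-forward $\mu_{p}:=(\Phi_{p})_{*}\mu$ is $\sigma$-invariant because $\mu$ is $T$-invariant, and $\widetilde\mu=\mathbb E_{p\in\mathbb P}\mu_{p}$, so that $\int_{X^{\mathbb Z}}\prod_{j}f_{j}(x_{j})\,d\widetilde\mu=\mathbb E_{p\in\mathbb P}\int_{X}\prod_{j}T^{pj}f_{j}\,d\mu$ on cylinder functions (the existence of these prime averages, which is what makes $\widetilde\mu$ well defined, follows from the Host--Kra decomposition of correlation sequences into a nilsequence plus a uniform part together with the prime equidistribution of nilsequences; alternatively one passes to a subsequence of $N$, which suffices to produce a $\sigma$-invariant limit measure). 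The plan is to compute the Host--Kra factors of $\widetilde\mu$, equivalently of almost every ergodic component $\rho$ of $\widetilde\mu$, by feeding the structure of these prime-averaged multiple correlations of $(X,T)$ into two deep inputs about the primes: the orthogonality of the M\"obius function to nilsequences together with the equidistribution of polynomial nilorbits along the primes (Green--Tao, \cite{GT}), and the Gowers-uniformity of every order of the von Mangoldt function together with the inverse theorem for the Gowers seminorms. Both assertions then emerge from splitting $\widetilde\mu$ into a ``structured'' part, which turns out to be a pro-nilsystem carrying only rational frequencies, and an ``unstructured'' part, which turns out to be Bernoulli.

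For assertion (1) I would fix an ergodic component $\rho$ of $\widetilde\mu$, let $\mathcal Z_{\infty}(\rho)=\bigvee_{k}\mathcal Z_{k}(\rho)$ be its $\infty$-step pro-nilfactor with associated factor system $(X'_{\rho},\mathcal X'_{\rho},\rho',\sigma')$ — an ergodic $\infty$-step pro-nilsystem by Definition~\ref{HK-factor} and Theorem~\ref{cs-pro-nil} — and then prove $\rho\cong\rho'\times(\text{Bernoulli})$ in two steps. First, I would show that $\rho$ has a Bernoulli factor $(B,\mathcal B,\beta,S_{B})$ with $\mathcal Z_{\infty}(\rho)\vee\mathcal B=\mathcal X^{\mathbb Z}\bmod\rho$: if $f\in L^{\infty}(\rho)$ satisfies $\mathbb E_{\rho}(f\mid\mathcal Z_{\infty}(\rho))=0$, then $\vert\mkern-2mu\vert\mkern-2mu\vert f\vert\mkern-2mu\vert\mkern-2mu\vert_{k}=0$ for every $k$ by Definition~\ref{HK-factor}, and since $\widetilde\mu$ is assembled from correlations of $(X,T)$ weighted by the primes — with the von Mangoldt function split into a part that is Gowers-uniform of every order, which annihilates any such $f$, and a ``nil'' part whose only obstructions to uniformity are nilsequences — the orthocomplement of $L^{2}(\mathcal Z_{\infty}(\rho))$ in $L^{2}(\rho)$ is generated by functions that behave under $\widetilde\mu$ like independent coordinates, which is exactly the data of a Bernoulli factor complementary to $\mathcal Z_{\infty}(\rho)$. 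Second, $\rho'$ has zero entropy (\cite[Theorem~7.14]{DDMSY}) and is therefore disjoint from every Bernoulli system (Furstenberg), so $\mathcal Z_{\infty}(\rho)$ and $\mathcal B$ are independent and $\rho\cong\rho'\times\beta$. The first step is the crux of the theorem and the place where the quantitative number theory — the Gowers-uniformity of all orders of the von Mangoldt function and the inverse theorem for the Gowers norms — is irreducible; rather than reprove these inputs I would follow the argument of Frantzikinakis and Host in \cite[\S3]{FHost} essentially verbatim (this is \cite[Theorem~3.10]{FHost}).

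For assertion (2) I would argue directly with spectral measures. Since the Kronecker factor is generated by the eigenfunctions, it is enough to show that $(X^{\mathbb Z},\widetilde\mu,\sigma)$ has no eigenvalue outside the roots of unity. Suppose $F\in L^{2}(\widetilde\mu)$ is nonzero with $F\circ\sigma=e(\alpha)F$ and $\alpha\notin\mathbb Q$, and approximate $F$ in $L^{2}(\widetilde\mu)$ by a cylinder function $G=H(x_{-m},\dots,x_{m})$ with $\|F-G\|<\varepsilon$. Applying the von Neumann ergodic theorem to the operator $G\mapsto e(-\alpha)(G\circ\sigma)$ gives $\frac1N\sum_{n<N}e(-n\alpha)\langle G\circ\sigma^{n},G\rangle_{\widetilde\mu}\to\langle PG,G\rangle$, where $P$ is the orthogonal projection onto the $e(\alpha)$-eigenspace, and this limit is $\ge\|F\|^{2}-O(\varepsilon)>0$ once $\varepsilon$ is small. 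On the other hand, for $n>2m$ the inner product $\langle G\circ\sigma^{n},G\rangle_{\widetilde\mu}$ is a prime-average $\mathbb E_{p\in\mathbb P}$ of a multiple correlation of $(X,T)$ carried on the two well-separated blocks $p\cdot[-m,m]$ and $p\cdot[n-m,n+m]$; by the Host--Kra theory these correlations are governed by the projection of the data to the pro-nilfactor of $(X,T)$, where they become nilsequences in $p$ and $n$, and the prime average over $p$ of such nilsequences carries no component at the irrational frequency $e(\alpha)$ — the model case being Vinogradov's bound $\mathbb E_{p\in\mathbb P}e(p\theta)=0$ for irrational $\theta$ and its nilsequence refinement \cite{GT}. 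Hence the Ces\`aro average above tends to $0$, a contradiction; the precise bookkeeping — which in effect identifies the Kronecker factor of $\widetilde\mu$ with a rotation on an inverse limit of finite cyclic groups, whose group of eigenvalues is exactly $e(\mathbb Q)$ — is carried out in \cite[Theorem~3.11]{FHost}.
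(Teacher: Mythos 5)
The paper does not prove this statement at all: it is imported verbatim from Frantzikinakis--Host (the text preceding it says the theorem ``is taken from \cite[Theorem 3.10 and Theorem 3.11]{FHost}''), and your proposal ultimately rests on exactly that same citation, with your intervening sketch being a reasonable heuristic gloss on their argument rather than an independent proof. So this is essentially the same approach as the paper, and the pieces you do spell out --- $\widetilde{\mu}$ as a prime average of the push-forwards $(\Phi_p)_*\mu$, the role of $\mathcal{Z}_\infty$, and the disjointness of zero-entropy systems from Bernoulli systems --- are consistent with the source you and the authors both defer to.
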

Let $\sigma$ be the left shift on $X=\{-1,0,1\}^{\mathbb{Z}}$. For a sequence $\bm{a}=\{ a_n\}_{n\in\mathbb{Z}}\in X$,  a \emph{Furstenberg system} of $a$ is a system $(X, \mathcal{X}, \mu,\sigma)$ where $\mu$ is an accumulation point of $\{\frac{1}{\log N}\sum_{j=1}^N\frac{1}{j}\delta_{\sigma^j \bm{a}}: N\in\mathbb{N}, N\ge 2 \}$. By regarding the M\"obius function $\bm{\mu}$ or the Liouville function $\bm{\lambda}$ a sequence in $\{-1,0,1\}^{\mathbb{Z}}$, the following result is proved in \cite[Proposition 3.9]{FHost}.

\begin{prop}\label{prop-1}
	A Furstenberg system $(X,\mathcal{X},\mu,T)$ of the M\"obius or the Liouville function is a factor of the associated system $(X^{\mathbb{Z}},\mathcal{X}^\mathbb{Z},\widetilde{\mu},\sigma)$ of arithmetic progressions with prime steps.
\end{prop}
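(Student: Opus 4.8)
The plan is to exhibit the natural ``diagonal'' factor map $\Psi\colon X^{\mathbb{Z}}\to X$ and then reduce $\Psi_{*}\widetilde{\mu}=\mu$ to a prime-dilation invariance of the logarithmic correlations of $\bm{a}$, where $\bm{a}$ stands for either $\bm{\mu}$ or $\bm{\lambda}$. Recall that here $X=\{-1,0,1\}^{\mathbb{Z}}$ and $T=\sigma$. I would set
\[
\bigl(\Psi(\underline{x})\bigr)(\ell):=x_{\ell}(0)\qquad(\underline{x}=(x_{n})_{n\in\mathbb{Z}}\in X^{\mathbb{Z}},\ \ell\in\mathbb{Z}),
\]
i.e.\ $\Psi$ keeps, from each coordinate $x_{\ell}\in X$ of $\underline{x}$, only its $0$-th entry $x_{\ell}(0)\in\{-1,0,1\}$. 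This $\Psi$ is continuous, and since $\bigl(\Psi(\sigma\underline{x})\bigr)(\ell)=(\sigma\underline{x})_{\ell}(0)=x_{\ell+1}(0)=\bigl(\Psi(\underline{x})\bigr)(\ell+1)=\bigl(\sigma\Psi(\underline{x})\bigr)(\ell)$, it intertwines the shift on $X^{\mathbb{Z}}$ with the shift $T=\sigma$ on $X$. So the whole content of the proposition is the identity $\Psi_{*}\widetilde{\mu}=\mu$.

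Since $\widetilde{\mu}$ and $\mu$ are Borel probability measures on the sequence space $X$, it suffices to check that they agree on cylinders. Fix $\ell_{1}<\dots<\ell_{k}$ and $\epsilon_{1},\dots,\epsilon_{k}\in\{-1,0,1\}$. Applying the defining formula for $\widetilde{\mu}$ with $f_{\ell_{i}}(x)=\mathbf{1}_{\{x(0)=\epsilon_{i}\}}$ and $f_{j}\equiv 1$ for the other indices, and noting that $T^{p\ell_{i}}f_{\ell_{i}}(x)=\mathbf{1}_{\{x(p\ell_{i})=\epsilon_{i}\}}$ because $T=\sigma$, one gets
\[
\widetilde{\mu}\bigl(\{\underline{x}:x_{\ell_{i}}(0)=\epsilon_{i},\ 1\le i\le k\}\bigr)=\mathbb{E}_{p\in\mathbb{P}}\,\mu\bigl(\{x:x(p\ell_{i})=\epsilon_{i},\ 1\le i\le k\}\bigr).
\]
On the other hand, since $\mu$ is a Furstenberg measure of $\bm{a}$ and cylinder indicators on $X$ are clopen, $\mu\bigl(\{x:x(q_{i})=\epsilon_{i},\ 1\le i\le k\}\bigr)$ equals $\lim_{N}\frac{1}{\log N}\sum_{j=1}^{N}\frac1j\prod_{i=1}^{k}\mathbf{1}_{\{\bm{a}(j+q_{i})=\epsilon_{i}\}}$ along the subsequence defining $\mu$. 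Hence $\Psi_{*}\widetilde{\mu}=\mu$ is equivalent to the prime-dilation invariance
\[
\mathbb{E}_{p\in\mathbb{P}}\,\lim_{N}\frac{1}{\log N}\sum_{j=1}^{N}\frac1j\prod_{i=1}^{k}\mathbf{1}_{\{\bm{a}(j+p\ell_{i})=\epsilon_{i}\}}=\lim_{N}\frac{1}{\log N}\sum_{j=1}^{N}\frac1j\prod_{i=1}^{k}\mathbf{1}_{\{\bm{a}(j+\ell_{i})=\epsilon_{i}\}}.
\]

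To establish this I would expand each indicator $\mathbf{1}_{\{\bm{a}(m)=\epsilon\}}$ as a linear combination of $1$, $\bm{a}(m)$ and $\bm{a}(m)^{2}$, reducing both sides to finite linear combinations of logarithmic correlations $\lim_{N}\frac{1}{\log N}\sum_{j\le N}\frac1j\prod_{i\in S}\bm{a}(j+q_{i})\prod_{i\in S'}\bm{a}(j+q_{i})^{2}$ with $S,S'$ disjoint subsets of $\{1,\dots,k\}$. If $|S|$ is odd, both the dilated correlation ($q_{i}=p\ell_{i}$) and the undilated one ($q_{i}=\ell_{i}$) vanish by the odd-order logarithmically averaged Chowla theorem of Tao and Ter\"av\"ainen, in the version allowing the bounded squarefree-indicator weights $\bm{a}(\cdot)^{2}$. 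If $|S|$ is even, Tao's entropy decrement argument supplies a set of primes $p$ of full logarithmic density along which
\[
\frac{1}{\log N}\sum_{j\le N}\frac1j\prod_{i\in S}\bm{a}(j+p\ell_{i})\prod_{i\in S'}\bm{a}(j+p\ell_{i})^{2}=\frac{1}{\log N}\sum_{j\le N}\frac1j\prod_{i\in S}\bm{a}(p(j+\ell_{i}))\prod_{i\in S'}\bm{a}(p(j+\ell_{i}))^{2}+o(1),
\]
and then the multiplicativity relation $\bm{a}(pn)=\bm{a}(p)\bm{a}(n)$ (valid for $p\nmid n$, the rare $p\mid n$ contributing $o(1)$) together with $\bm{a}(p)=-1$, so that $\bm{a}(p)^{|S|}=1$, turns the right-hand side into the undilated correlation. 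Averaging over $p\in\mathbb{P}$ then gives the desired identity term by term, hence $\Psi_{*}\widetilde{\mu}=\mu$.

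Everything outside the last paragraph is bookkeeping: the equivariance and continuity of $\Psi$, the cylinder reduction, and the passage to subsequences that lets the prime average commute with the Furstenberg limit. The main obstacle is the analytic number theory quoted in that last paragraph --- Tao's entropy decrement estimate and the odd-order logarithmic Chowla estimate of Tao and Ter\"av\"ainen --- and, more subtly, the fact that these must be applied with enough uniformity in the dilated shifts $p\ell_{i}$ and in the presence of the squarefree-indicator weights. For the purposes of this paper this entire step is taken from \cite{FHost}, where the precise form of these estimates together with the handling of the prime-average limit $\mathbb{E}_{p\in\mathbb{P}}$ and of the Furstenberg subsequence is carried out in detail.
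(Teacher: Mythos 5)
The paper does not reprove this statement: it is quoted verbatim from \cite[Proposition~3.9]{FHost}, and Remark~\ref{rem-pi-continuous} records the factor map used there, namely $(\pi(\underline{x}))(n)=-x_n(0)$. Your map $\Psi(\underline{x})(\ell)=x_\ell(0)$ omits the minus sign, and this is not cosmetic. The point of the sign is that for $p\nmid(j+\ell_i)$ one has $\bm{a}(j+\ell_i)=\bm{a}(p)^{-1}\bm{a}(p(j+\ell_i))=-\bm{a}(p(j+\ell_i))$, so with the signed map the identity
\[
\textstyle\prod_i \bm{a}(j+\ell_i)^{e_i}=\prod_i\bigl(-\bm{a}(pj+p\ell_i)\bigr)^{e_i}
\]
is \emph{exact} term by term for every monomial, and the whole proof reduces to multiplicativity plus the entropy decrement step, with no parity casework. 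Dropping the sign forces you to argue that the factor $(-1)^{|S|}$ is harmless, which you do by invoking the odd-order logarithmically averaged Chowla theorem of Tao--Ter\"av\"ainen to kill all odd-degree correlations on both sides. That is a genuinely different and much heavier route: the odd-order input is a deep theorem that \cite{FHost} deliberately avoids needing, and in the M\"obius case you additionally need its validity for mixed correlations carrying the squarefree weights $\bm{\mu}(\cdot)^2$ at the dilated shifts $p\ell_i$, uniformly enough to survive the prime average --- a statement you assert but do not locate precisely.

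This also makes your closing sentence a misattribution: you cannot defer ``this entire step'' to \cite{FHost}, because \cite{FHost} proves the identity for the \emph{signed} map and never establishes the unsigned version you need; the missing ingredient lives in the Tao--Ter\"av\"ainen papers, not in \cite{FHost}. The clean fix is simply to restore the minus sign, $(\Psi(\underline{x}))(\ell)=-x_\ell(0)$, after which your cylinder/monomial reduction goes through with only the exact multiplicativity identity (the exceptional $p\mid(j+\ell_i)$ terms contributing $o(1)$ on average) and Tao's entropy decrement argument, exactly as in \cite{FHost} and as recorded in Remark~\ref{rem-pi-continuous}. Everything else in your write-up --- the equivariance and continuity of the map, the reduction to cylinder sets, and the bookkeeping with the subsequence defining $\mu$ and the prime average --- is correct and matches the intended argument.
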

\begin{rem}\label{rem-pi-continuous}Recall that $X=\{-1,0,1\}^{\mathbb{Z}}$. The factor map $\pi\colon X^\mathbb{Z} \to X$ in Proposition \ref{prop-1} is defined as follows:
 For $\underline{x} = (x_n)_{n\in\mathbb{Z}} \in X^\mathbb{Z}$, let
\[
(\pi(\underline{x}))(n) = -x_n(0).
\]
Thus, the factor map $\pi$ is continuous. For more details, one can see the proof of \cite[Proposition 3.9]{FHost}.
\end{rem}
\begin{thm}\label{thm-zero-joining}Let $(X, \mathcal{X}, \mu,\sigma)$ be a  Furstenberg system of the M\"obius or the Liouville function. Let $(Y,\mathcal{Y},\nu,R)$ be a system  with  zero entropy and  almost countable spectrum. For every joining $\tau$ of the two systems, and every $g\in L^{\infty}(Y,\mathcal{Y},\nu)$ satisfying $g\perp K_{\mathrm{rat}}(Y,\mathcal{Y},\nu,R)$ in $L^2(Y,\mathcal{Y},\nu)$, we have
	\[
	\int f(x)\,g(y)\,d\tau(x,y) = 0
	\]
	for every $f\in L^{2}(X,\mathcal{X},\mu)$.
\end{thm}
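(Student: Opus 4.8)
The plan is to deduce the statement from Proposition~\ref{prop-dis-1} by transporting the joining $\tau$ upward through the factor map provided by Proposition~\ref{prop-1}. First I would invoke Proposition~\ref{prop-1} to obtain a factor map $\pi\colon(X^{\mathbb Z},\mathcal X^{\mathbb Z},\widetilde\mu,\sigma)\to(X,\mathcal X,\mu,\sigma)$, where $(X^{\mathbb Z},\mathcal X^{\mathbb Z},\widetilde\mu,\sigma)$ is the associated system of arithmetic progressions with prime steps. By Theorem~\ref{thm--f-h}, this larger system satisfies exactly the two hypotheses needed to apply Proposition~\ref{prop-dis-1}: almost every ergodic component of $(X^{\mathbb Z},\mathcal X^{\mathbb Z},\widetilde\mu,\sigma)$ is isomorphic to a direct product of an $\infty$-step pro-nilsystem and a Bernoulli system, and the system has no irrational spectrum.

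The second step is to lift $\tau$. Given a joining $\tau$ of $(X,\mathcal X,\mu,\sigma)$ and $(Y,\mathcal Y,\nu,R)$, I would use the disintegration $\widetilde\mu=\int_X\widetilde\mu_x\,d\mu(x)$ of $\widetilde\mu$ over $\pi$ (Subsection~\ref{subs-Disin}) and set
\[
\widetilde\tau=\int_{X\times Y}\widetilde\mu_x\times\delta_y\,d\tau(x,y),
\]
the relatively independent product of $\widetilde\mu$ and $\tau$ over $\mu$. To see that $\widetilde\tau$ is a joining of $(X^{\mathbb Z},\mathcal X^{\mathbb Z},\widetilde\mu,\sigma)$ and $(Y,\mathcal Y,\nu,R)$ lying above $\tau$, one checks: (a) its $X^{\mathbb Z}$-marginal is $\int_X\widetilde\mu_x\,d\mu(x)=\widetilde\mu$ and its $Y$-marginal is $\int_{X\times Y}\delta_y\,d\tau(x,y)=\nu$; (b) $(\pi\times\mathrm{id})_*\widetilde\tau=\tau$, since $\widetilde\mu_x$ is carried by the fibre $\pi^{-1}(x)$, so $\pi_*\widetilde\mu_x=\delta_x$ for $\mu$-a.e.\ $x$; (c) $\widetilde\tau$ is $(\sigma\times R)$-invariant, which follows from the equivariance relation $\sigma_*\widetilde\mu_x=\widetilde\mu_{\sigma x}$ for $\mu$-a.e.\ $x$ (a standard consequence of the uniqueness of disintegration applied to the two disintegrations $x\mapsto\widetilde\mu_x$ and $x\mapsto\sigma_*\widetilde\mu_{\sigma^{-1}x}$ of $\widetilde\mu$ over $\pi$) together with the $(\sigma\times R)$-invariance of $\tau$. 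This elementary but slightly fiddly verification is the only point in the argument that is not an outright citation, and it is where I expect whatever difficulty there is to lie.

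With $\widetilde\tau$ in hand the conclusion is immediate. Fix $g\in L^{\infty}(Y,\mathcal Y,\nu)$ with $g\perp K_{\mathrm{rat}}(Y,\mathcal Y,\nu,R)$ in $L^{2}(Y,\mathcal Y,\nu)$, and fix $f\in L^{2}(X,\mathcal X,\mu)$. Since $\pi_*\widetilde\mu=\mu$, the function $f\circ\pi$ belongs to $L^{2}(X^{\mathbb Z},\mathcal X^{\mathbb Z},\widetilde\mu)$ with the same norm as $f$. Applying Proposition~\ref{prop-dis-1} to the systems $(X^{\mathbb Z},\mathcal X^{\mathbb Z},\widetilde\mu,\sigma)$ and $(Y,\mathcal Y,\nu,R)$ (the latter having zero entropy and almost countable spectrum by hypothesis), the joining $\widetilde\tau$, the function $g$, and $f\circ\pi$, we obtain $\int (f\circ\pi)(\underline{x})\,g(y)\,d\widetilde\tau(\underline{x},y)=0$. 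Since $(\pi\times\mathrm{id})_*\widetilde\tau=\tau$,
\[
\int f(x)\,g(y)\,d\tau(x,y)=\int (f\circ\pi)(\underline{x})\,g(y)\,d\widetilde\tau(\underline{x},y)=0,
\]
which is exactly the assertion of Theorem~\ref{thm-zero-joining}.
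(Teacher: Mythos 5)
Your proposal is correct and follows essentially the same route as the paper: lift through the factor map of Proposition~\ref{prop-1}, apply Proposition~\ref{prop-dis-1} together with Theorem~\ref{thm--f-h} to the lifted joining, and push the identity back down. The only difference is that the paper merely asserts the existence of a joining $\widetilde\tau$ with $(\pi\times\mathrm{id})_*\widetilde\tau=\tau$, whereas you construct it explicitly as the relatively independent product over $\mu$; your verification of its marginals, equivariance, and compatibility with $\pi$ is accurate.
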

\begin{proof}By Proposition \ref{prop-1} and Remark \ref{rem-pi-continuous}, there exists a continuous factor map  $\pi$  between $(X^{\mathbb{Z}},\mathcal{X}^\mathbb{Z},\widetilde{\mu},\sigma)$  and $(X,\mathcal{X}, \mu,\sigma)$. For $f\in L^{2}(X,\mathcal{X},\mu)$, put $\widetilde{f}=f\circ\pi$ and find a joining $\widetilde{\tau}$ of $(X^{\mathbb{Z}},\mathcal{X}^\mathbb{Z},\widetilde{\mu},\sigma)$ and  $(Y,\mathcal{Y},\nu,R)$  such that $\tau=(\pi\times\text{id})_*(\widetilde{\tau})$. Then $\widetilde{f}\in L^{2}(X^{\mathbb{Z}},\mathcal{X}^\mathbb{Z},\widetilde{\mu})$. Straightforward from Proposition \ref{prop-dis-1} and Theorem \ref{thm--f-h}
	$$	\int \widetilde{f}(\underline{x})\,g(y)\,d\widetilde{\tau}(\underline{x},y) = 0.$$
This implies
	$\int f(x)\,g(y)\,d\tau(x,y) = 0,$ which
proves Theorem \ref{thm-zero-joining}.	\end{proof}

\section{Proof of Theorem \ref{thm-A}}\label{sec-proof-thm-a}
In this section our goal is to prove Theorem~\ref{thm-A}. In fact, Theorem~\ref{thm-A} is an immediate corollary of the following proposition.

\begin{prop}\label{c-0}
Let $(Y,R)$ be a TDS.
Assume there exist a point $z\in Y$, a sequence $2\le N_{1}<N_{2}<N_{3}<\cdots$ of natural numbers, and $\nu\in \mathcal{M}(Y,R)$ such that
\[
\lim_{i\to\infty}\frac{1}{\log N_i}\sum_{n=1}^{N_i}\frac{\delta_{R^n z}}{n}=\nu,
\]
where $\nu$ has zero entropy and almost countable spectrum.
Then for every $f\in C(Y)$,
\begin{equation}\label{lim-f-mobious}
\lim_{i\to\infty}\frac{1}{\log N_i}\sum_{n=1}^{N_i}\frac{f(R^{n}z)\bm{\mu}(n)}{n}=0.
\end{equation}
\end{prop}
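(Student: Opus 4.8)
\textbf{Proof proposal for Proposition \ref{c-0}.}

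The plan is to transfer the problem to the Furstenberg systems of the Liouville/M\"obius function and invoke the disjointness statement Theorem~\ref{thm-zero-joining}. First I would pass to a subsequence so that both the empirical measure of $z$ in $(Y,R)$ along the logarithmic averages over $[1,N_i]$ and the joint empirical measure of the pair $(z,\bm{\mu})$ (equivalently, the pushforward of $\delta_{(R^nz,\sigma^n\bm{\mu})}$ under logarithmic averaging, where $\sigma$ is the shift on $\{-1,0,1\}^{\mathbb Z}$) converge weak$^*$. Call the second limit $\lambda$; by construction $\lambda$ is a joining of $\nu$ with some Furstenberg system $(X,\mathcal{X},\mu,\sigma)$ of the M\"obius function, its $Y$-marginal being $\nu$. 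Write $f = \mathbb{E}_\nu\!\bigl(f\mid \mathcal{K}_{\mathrm{rat}}(Y,\mathcal{Y},\nu,R)\bigr) + g$, so that $g\in L^\infty(Y,\mathcal{Y},\nu)$ and $g\perp \mathcal{K}_{\mathrm{rat}}(Y,\mathcal{Y},\nu,R)$. Let $F_0\colon X\to\{-1,0,1\}$ be the time-zero coordinate, which realizes $\bm\mu$; then the limit in \eqref{lim-f-mobious} along the chosen subsequence equals $\int F_0(x)f(y)\,d\lambda(x,y)$. Applying Theorem~\ref{thm-zero-joining} with this joining $\lambda$, this function $g$ and $f=F_0\in L^2(X,\mathcal{X},\mu)$ kills the $g$-part: $\int F_0(x)g(y)\,d\lambda(x,y)=0$. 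It remains to handle the rational-Kronecker part $u:=\mathbb{E}_\nu(f\mid\mathcal{K}_{\mathrm{rat}}(Y,\mathcal{Y},\nu,R))$.

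For the rational part I would use the decomposition of Lemma~\ref{dec-rat-part}(2): $u=\sum_{\lambda\in\operatorname{Spec}_{\mathrm{rat}}}u_\lambda$ with $u_\lambda\in E_\lambda$ and $\|u\|_2^2=\sum\|u_\lambda\|_2^2$. Each rational eigenvalue is of the form $e(a/q)$, and the corresponding contribution $\int F_0(x)u_\lambda(y)\,d\lambda(x,y)$ is governed by how the M\"obius function correlates with $q$-periodic functions along logarithmic averages. The key classical input is that $\bm\mu$ is asymptotically orthogonal to periodic sequences, in fact in the stronger logarithmically-averaged sense; concretely, for any $q$ one has $\frac{1}{\log N}\sum_{n\le N}\frac{\bm\mu(n)e(an/q)}{n}\to 0$. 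This forces, in the Furstenberg system $(X,\mathcal{X},\mu,\sigma)$, that $F_0$ is orthogonal (in $\lambda$, hence it suffices to know it in an appropriate sense) to every eigenfunction of the $Y$-factor with nontrivial rational eigenvalue; more precisely, projecting $\lambda$ to the relevant finite rational factors and using that the rational spectrum of any Furstenberg system of $\bm\mu$ is trivial (this is part of the Frantzikinakis--Host structure already invoked, since $(X^{\mathbb Z},\widetilde\mu,\sigma)$ has no irrational spectrum and one checks the rational part directly from orthogonality to periodic functions), we get $\int F_0(x)u_\lambda(y)\,d\lambda(x,y)=0$ for every $\lambda\in\operatorname{Spec}_{\mathrm{rat}}$. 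Summing (justified by $L^2$-convergence of the series and $\|F_0\|_\infty\le 1$) gives $\int F_0(x)u(y)\,d\lambda(x,y)=0$.

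Combining the two pieces, $\int F_0(x)f(y)\,d\lambda(x,y)=0$, i.e. the logarithmic average in \eqref{lim-f-mobious} tends to $0$ along the chosen subsequence. Since every subsequence of $(N_i)$ has a further subsequence along which both empirical measures converge and the argument applies verbatim (the limiting $\nu$ is fixed by hypothesis), a standard subsequence argument yields convergence to $0$ along the full sequence $(N_i)$. Finally, Theorem~\ref{thm-A} follows because for any zero-entropy TDS $(X,T)$ with almost countable spectrum, any $f\in C(X)$ and any $x\in X$, one may pass to a subsequence along which $\frac{1}{\log N_i}\sum_{n\le N_i}\frac{\delta_{T^nx}}{n}$ converges to some $\nu\in\mathcal{M}(X,T)$, which automatically has zero entropy and almost countable spectrum, so Proposition~\ref{c-0} applies; again a subsequence argument upgrades this to the full limit.

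\textbf{Main obstacle.} The genuinely delicate point is the treatment of the rational part: one must show that in a Furstenberg system of $\bm\mu$ the time-zero function is orthogonal, in the joining $\lambda$, to all rational eigenfunctions coming from the $Y$-side. This is where the input ``$\bm\mu$ is logarithmically orthogonal to periodic sequences'' enters; isolating exactly which known result (e.g. a logarithmic version of the prime-number-theorem-in-arithmetic-progressions estimate, or the triviality of the rational Kronecker factor of Furstenberg systems of $\bm\mu$) suffices, and checking it passes cleanly through the factor map $\pi\colon X^{\mathbb Z}\to X$ and the ergodic decomposition of the joining, is the heart of the proof. Everything else is soft: weak$^*$ compactness, the $L^2$-orthogonal decomposition, and the disjointness Theorem~\ref{thm-zero-joining} already proved.
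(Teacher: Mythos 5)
Your treatment of the part of $f$ orthogonal to the rational Kronecker factor is essentially the paper's argument: pass to a subsequence along which the joint empirical measure of $(\sigma^n\bm{\mu},R^nz)$ converges to a joining $\tau$ of a Furstenberg system of $\bm{\mu}$ with $(Y,\mathcal{Y},\nu,R)$, and apply Theorem~\ref{thm-zero-joining} to kill $f_*=f-\mathbb{E}_\nu(f\mid\mathcal{K}_{\mathrm{rat}})$. The genuine gap is exactly where you locate your ``main obstacle'': the rational part. Two problems. First, your justification rests on the claim that the rational spectrum of a Furstenberg system of $\bm{\mu}$ is trivial and that this follows from logarithmic orthogonality of $\bm{\mu}$ to periodic sequences. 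The claim is false as stated: because $\bm{\mu}^2$ is the indicator of the squarefree integers, Furstenberg systems of $\bm{\mu}$ carry plenty of rational eigenvalues (Theorem~\ref{thm--f-h}(2) only excludes \emph{irrational} spectrum). What one would actually need is that the projection of the coordinate function $F_0$ onto each rational eigenspace of $(X,\mathcal{X},\mu,\sigma)$ vanishes in $L^2(\mu)$; since $\|P_{e(-t)}F_0\|_{L^2(\mu)}^2=\lim_N N^{-2}\sum_{n,m}e((n-m)t)\,c(n-m)$ with $c(k)=\int F_0\circ\sigma^k\cdot F_0\,d\mu$ the logarithmically averaged two-point correlation of $\bm{\mu}$, this requires (some averaged form of) the logarithmic two-point Chowla theorem, not merely Davenport's estimate $\frac{1}{\log N}\sum_{n\le N}\bm{\mu}(n)e(nt)/n\to 0$, which only controls a single specific joining with a rotation and gives no handle on the $L^2$-norm of the projection. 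You never invoke such an input.

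Second, even granting control of the projections, your identity $\int F_0(x)u_\lambda(y)\,d\tau(x,y)=\lim_i\frac{1}{\log N_i}\sum_n\frac{\bm{\mu}(n)u_\lambda(R^nz)}{n}$ cannot be used directly: $u_\lambda$ is only an $L^2(\nu)$ eigenfunction, the eigenfunction equation $u_\lambda(R^ny)=e(nt)u_\lambda(y)$ holds only $\nu$-a.e., and the point $z$ is not assumed generic in any pointwise sense (it could lie in a $\nu$-null set), so one cannot evaluate $u_\lambda$ along the orbit of $z$. This is precisely the difficulty the paper's proof is built to circumvent: it truncates the eigenfunctions, replaces them by continuous functions $g_t$ via Lusin's theorem on a compact set $K$ of nearly full measure, shifts to nearby points $z_n\in K_L$, averages over a window of length $L$, and then bounds the resulting expression $I_i(t,L)$ using the Matom\"{a}ki--Radziwi\l{}\l{}--Tao averaged Chowla estimate \cite{MRT} in the form \eqref{L-cont-bound} (see Lemmas~\ref{c-1} and~\ref{c-2}). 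None of this machinery, nor the number-theoretic input that powers it, appears in your proposal, so the rational part remains unproved.
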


We now proceed to give the proof of Proposition~\ref{c-0}. Let \( f \in C(Y) \).  To verify \eqref{lim-f-mobious} it suffices to show that for every \( \varepsilon > 0 \),
\begin{equation}\label{lim-f-7epsilon}
\limsup_{i \to \infty} \biggl| \frac{1}{\log N_i} \sum_{n=1}^{N_i} \frac{f(R^n z) \bm{\mu}(n)}{n} \biggr| \leq 8\varepsilon.
\end{equation}

Fix \( \varepsilon > 0 \).  Write
\(
\mathcal{T} := \{ t \in [0,1) : e(t) \in \operatorname{Spec}_{\mathrm{rat}}(Y, \mathcal{Y}, \nu, R) \}\),
which is a countable subset of \( [0,1) \cap \mathbb{Q} \).  Put
\(f_* := f - \mathbb{E}_\nu \bigl( f \big| \mathcal{K}_{\mathrm{rat}}(Y, \mathcal{Y}, \nu, R) \bigr)\).
Then \( f_* \in L^\infty(Y,\mathcal{Y},\nu) \) and \( f_* \perp \mathcal{K}_{\mathrm{rat}}(Y, \mathcal{Y}, \nu, R) \) in \( L^2(Y,\mathcal{Y},\nu) \).

By the orthogonal decomposition (see Lemma~\ref{dec-rat-part}~(2)),
\[
\mathbb{E}_\nu \bigl( f \big| \mathcal{K}_{\mathrm{rat}}(Y, \mathcal{Y}, \nu, R) \bigr) = \sum_{t \in \mathcal{T}} f_t,
\]
where each \( f_t \in E_{e(t)}:=\{h\in L^2(Y,\mathcal{K}_{\mathrm{rat}}(Y, \mathcal{Y}, \nu, R),\nu): U_Rh=e(t)h\} \) for $t\in \mathcal{T}$ and
\[
\sum_{t \in \mathcal{T}} \|f_t\|_{L^2(\nu)}^2 = \bigl\| \mathbb{E}_\nu \bigl( f \big| \mathcal{K}_{\mathrm{rat}}(Y, \mathcal{Y}, \nu, R) \bigr) \bigr\|_{L^2(\nu)}^2 < \infty.
\]
Thus \(f = f_* + \sum_{t \in \mathcal{T}} f_t\).

For \( P > 0 \) and \( t \in \mathcal{T} \) define
\[
f_t^{(P)}(x) = f_t(x) \cdot \mathbf{1}_{\{|f_t| \leq P\}}(x).
\]
Then the bounded function \( f_t^{(P)} \in E_{e(t)} \). Choose a finite set \( C \subset \mathcal{T} \) and a large \( P > 0 \) such that
\begin{equation}\label{eq-de-cont}
\biggl\| \sum_{t \in \mathcal{T}} f_t - \sum_{t \in C} f_t^{(P)} \biggr\|_{L^1(\nu)}\le \biggl\| \sum_{t \in \mathcal{T}} f_t - \sum_{t \in C} f_t^{(P)} \biggr\|_{L^2(\nu)} < \varepsilon.
\end{equation}

Since $C$ is finite, we can find a Borel measurable subset $Y_0\subset Y$ with $\nu(Y_0)=1$ such that
\begin{equation}\label{ft-rot-eq}
f_t^{(P)}(R^ny)=e(nt)f_t^{(P)}(y) \text{ for every }y\in Y_0,\, n\in \mathbb{N}, \, t\in C.
\end{equation}
By \cite[Theorem 1.3]{MRT}, there exists $L\in \mathbb{N}$ such that
\begin{equation}\label{L-cont-bound}
\limsup_{N\rightarrow +\infty} \frac{1}{\log N} \sup_{\theta\in [0,1]}\Big(\sum_{n=1}^N\frac{1}{n} \big|\frac{1}{L}\sum_{\ell=1}^{L}e(\ell \theta)\bm{\mu}(n+\ell)\big|\Big)<\frac{\epsilon}{(\#C+1)P}.
\end{equation}

By Lusin's theorem, there exists a compact subset $K\subset Y_0$ such that
\begin{align}\label{eq-002}
\nu(K)> 1-\frac{\epsilon}{(L+1)(\#C+1)P}
\end{align}
and $f_t|_K$ is continuous for each $t\in C$. We can then find a continuous function $g_t$ on $Y$ such that $g_t|_K=f_t^{(P)}|_K$ and $\|g_t\|_{\infty}:=\max_{y\in Y} |g_t(y)|\le \|f_t^{(P)}\|_{L^\infty(\nu)}\le P$.
Thus,
\begin{align}\label{eq-minius-f-g}\begin{split}
&\hskip0.5cm \|f-\sum_{t\in C}g_t-f_{*}\|_{L^1(\nu)}\\
&\le \|f-\sum_{t\in C}f_t^{(P)}-f_{*}\|_{L^1(\nu)}+\sum_{t\in C}\|f_t^{(P)}-g_t\|_{L^1(\nu)}\\
&\le\biggl\|\sum_{t\in\mathcal{T}}f_t-\sum_{t\in C}f_t^{(P)}\biggr\|_{L^1(\nu)} +\sum_{t\in C}2\|f_t^{(P)}\|_{L^\infty(\nu)}(1-\nu(K))\\
&\le 3\epsilon,
\end{split}\end{align}
where we have used \eqref{eq-de-cont} and \eqref{eq-002} in the last inequality.

We claim that
\begin{align}\label{eq-irr-small-1}\begin{split}\limsup_{i\to\infty}\biggl|\frac{1}{\log N_i}\sum_{n=1}^{N_i}\frac{ (f-\sum_{t\in C}g_t)(R^{n}z)\bm{\mu}(n)}{n}\biggr|\le 3\epsilon.
\end{split}\end{align}
To prove the claim, we assume the contrary and fix a subsequence $\{\widetilde{N}_i \}_{i\in\mathbb{N}}$ of  $\{N_i \}_{i\in\mathbb{N}}$ such that
$$\lim_{i\to\infty}\frac{1}{\log\widetilde{N}_i}\sum_{n=1}^{\widetilde{N}_i}\frac{\delta_{(\sigma^n\bm{\mu},R^nz)}}{n}=\tau \text{ for some }\tau\in \mathcal{M}(X\times Y,\sigma\times R)$$
and
\begin{align}\label{eq-irr-small-2}
\begin{split}
\limsup_{i\to\infty}\biggl|\frac{1}{\log \widetilde{N}_i}\sum_{n=1}^{\widetilde{N}_i}\frac{ (f-\sum_{t\in C}g_t)(R^{n}z)\bm{\mu}(n)}{n}\biggr|>3\epsilon,
\end{split}
\end{align}
where $X=\{ -1,0,1\}^{\mathbb{N}}$ and $\sigma:X\rightarrow X$ is the left shift.

Let $p_1:X\times Y\rightarrow X$ and $p_2:X\times Y\rightarrow Y$ be the coordinate projections. Then
$$(p_1)_*(\tau)=\lim_{i\to\infty}\frac{1}{\log\widetilde{N}_i}\sum_{n=1}^{\widetilde{N}_i}\frac{\delta_{\sigma^n\bm{\mu}}}{n}\in \mathcal{M}(X,\sigma)$$
and
$$(p_2)_*(\tau)=\lim_{i\to\infty}\frac{1}{\log\widetilde{N}_i}\sum_{n=1}^{\widetilde{N}_i}\frac{\delta_{R^nz}}{n}=\nu.$$
Thus $(X,\mathcal{X}, (p_1)_*(\tau),\sigma)$ is a Furstenberg system of the M\"{o}bius function $\bm{\mu}$, and $\tau$ is a joining of $(X,\mathcal{X},(p_1)_*(\tau),\sigma)$ and $(Y,\mathcal{Y},\nu,R)$.

Put $F(x)=x(0)$ for $x=\{x(i)\}_{i\in\mathbb{Z}}\in\{-1,0,1\}^{\mathbb{Z}}$. Then \(F\in C(X)\) and \(|F(x)|\le 1\) for every \(x\in X\). Since \((Y,\mathcal{Y},\nu,R)\) has almost countable spectrum and \(f_*\in L^\infty(Y,\mathcal{Y},\nu)\) satisfies \(f_*\perp\mathcal{K}_{\mathrm{rat}}(Y,\mathcal{Y},\nu,R)\) in \(L^2(Y,\mathcal{Y},\nu)\), Theorem~\ref{thm-zero-joining} gives
\begin{align}\label{eq-012}	
\int F(x)\,f_{*}(y)\,d\tau(x,y)=0.
\end{align}
Moreover,
\begin{align*}
&\hskip0.5cm \biggl|\int F(x)(f-\sum_{t\in C}g_t)(y)\,d\tau(x,y)\biggr|\\
&\le \biggl|\int F(x)(f-\sum_{t\in C}g_t-f_{*})(y)\,d\tau(x,y)\biggr|+\biggl|\int F(x)f_{*}(y)\,d\tau(x,y)\biggr|\\
&\le \|f-\sum_{t\in C}g_t-f_{*}\|_{L^1(\nu)}+\biggl|\int F(x)f_{*}(y)\,d\tau(x,y)\biggr|.
\end{align*}
Together with \eqref{eq-minius-f-g} and \eqref{eq-012}, this yields
\[
\biggl|\int F(x)(f-\sum_{t\in C}g_t)(y)\,d\tau(x,y)\biggr|\le 3\epsilon.
\]
Hence
\begin{align*}
&\hskip0.5cm \limsup_{i\to\infty}\biggl|\frac{1}{\log \widetilde{N}_i}\sum_{n=1}^{\widetilde{N}_i} (f-\sum_{t\in C}g_t)(R^{n}z)\bm{\mu}(n)\biggr|\\
&=\limsup_{i\to\infty}\left| \int F(x)(f-\sum_{t\in C}g_t)(y)\, d \biggl(\frac{1}{\log \widetilde{N}_i}\sum_{n=1}^{\widetilde{N}_i}\delta_{(\sigma^n\bm{\mu},R^{n}z)}\biggr)(x,y)\right|\\
&=\biggl|\int F(x)(f-\sum_{t\in C}g_t)(y)\,d\tau(x,y)\biggr|\\
&\le 3\epsilon.
\end{align*}
This contradicts \eqref{eq-irr-small-2}, so \eqref{eq-irr-small-1} holds, which proves the claim.

Now we prove the second claim. For every \(t \in C\) we assert that
\begin{align}\label{eq-rat-small-1}
\limsup_{i\to\infty}\biggl|\frac{1}{\log N_i}\sum_{n=1}^{N_i}\frac{g_t(R^{n}z)\bm{\mu}(n)}{n}\biggr|\le\frac{5\epsilon}{\#C+1}.
\end{align}
Fix \(t\in C\). The continuity of \(g_t\) guarantees an \(\epsilon_L>0\) such that
\begin{align}\label{5-2-2}
|g_t(R^iy)-g_t(R^iy')|<\frac{\epsilon}{\#C+1}
\end{align}
for \(i=0,1,\dots,L\) and any \(y,y'\in Y\) with \(d(y,y')<\epsilon_L\).

Set $K_L=\bigcap_{j=0}^{L}R^{-j}K$. Since $\nu(R^{-j}K)=\nu(K)\overset{\eqref{eq-002}}>1-\frac{\epsilon}{(L+1)(\#C+1)P}$ for $j=0,1,\cdots,L$,
one has $\nu(K_L)>1-\frac{\epsilon}{(\#C+1)P}$. Put
\[
U_L=\{y\in Y:d(y,K_L)<\epsilon_L\}\quad\text{and}\quad E_L=\{n\in\mathbb N:R^{n}z\in U_L\}.
\]
Clearly \(U_L\) is an open subset of \(Y\) containing \(K_L\). Let \(1_{U_L}\) denote the characteristic function of the open set \(U_L\). Then
\begin{align}\label{eq-5-2-4}
\begin{split}
\liminf_{i\to\infty}\frac{1}{\log N_i}\sum_{n\in E_L\cap[1,N_i]}\frac{1}{n}
&=\liminf_{i\to\infty}\frac{1}{\log N_i}\sum_{n=1}^{N_i}\frac{1_{U_L}(R^nz)}{n}\ge\nu(U_L)\\[2mm]
&\ge\nu(K_L)>1-\frac{\epsilon}{(\#C+1)P}.
\end{split}
\end{align}

For every \(n\in E_L\) pick \(z_n\in K_L\) with \(d(R^nz,z_n)<\epsilon_L\); for \(n\notin E_L\) set \(z_n=z\). For any \(i\in\mathbb N\) write briefly
\[
I_i(t,L):=\frac{1}{\log N_i}\sum_{n=1}^{N_i}\frac{1}{L}\sum_{l=1}^{L}\frac{g_t(R^{l}z_n)\bm{\mu}(n+l)}{n};
\]
then
\begin{equation}\label{main-equation}
\biggl|\frac{1}{\log N_i}\sum_{n=1}^{N_i}\frac{g_t(R^{n}z)\bm{\mu}(n)}{n}\biggr|\le
\biggl|\frac{1}{\log N_i}\sum_{n=1}^{N_i}\frac{g_t(R^{n}z)\bm{\mu}(n)}{n}-I_i(t,L)\biggr|
+|I_i(t,L)|.
\end{equation}
Next we turn to proving two lemmas.

	\begin{lem}\label{c-1}
		One can estimate the first part of inequality (\ref{main-equation}) as follows:
		$$\limsup_{i\to\infty}|\frac{1}{\log N_i}\sum_{n=1}^{N_i}\frac{g_t(R^{n}z)\bm{\mu}(n)}{n}-I_i(t,L)|\leq \frac{3 \epsilon}{\#C+1}.$$
	\end{lem}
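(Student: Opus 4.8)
The plan is to bound the difference $|\frac{1}{\log N_i}\sum_{n=1}^{N_i}\frac{g_t(R^{n}z)\bm{\mu}(n)}{n}-I_i(t,L)|$ by splitting it into two pieces using the triangle inequality. First I would insert the intermediate quantity
\[
J_i(t,L):=\frac{1}{\log N_i}\sum_{n=1}^{N_i}\frac{1}{L}\sum_{l=1}^{L}\frac{g_t(R^{n+l}z)\bm{\mu}(n+l)}{n},
\]
so that
\[
\Big|\frac{1}{\log N_i}\sum_{n=1}^{N_i}\frac{g_t(R^{n}z)\bm{\mu}(n)}{n}-I_i(t,L)\Big|
\le \Big|\frac{1}{\log N_i}\sum_{n=1}^{N_i}\frac{g_t(R^{n}z)\bm{\mu}(n)}{n}-J_i(t,L)\Big|+\big|J_i(t,L)-I_i(t,L)\big|.
\]

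For the first term, the idea is that averaging $g_t(R^{n+l}z)\bm{\mu}(n+l)$ over the short window $l=1,\dots,L$ is a harmless smoothing: after re-indexing $m=n+l$, each weight $\frac{1}{m}$ versus $\frac{1}{n}$ differs by $O(L/n^2)$, and the shifted logarithmic averages $\frac{1}{\log N_i}\sum_{n}\frac{g_t(R^nz)\bm{\mu}(n)}{n}$ and $\frac{1}{\log N_i}\sum_{n}\frac{g_t(R^{n+l}z)\bm{\mu}(n+l)}{n}$ have the same limit because $\log(N_i+l)/\log N_i\to 1$ and boundary terms are negligible (using $\|g_t\|_\infty\le P$). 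So $\limsup_i|\cdots - J_i(t,L)|=0$; this contributes nothing to the $\frac{3\epsilon}{\#C+1}$ budget.

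For the second term I would use the set $E_L$ and the approximating points $z_n$. On $n\in E_L$ we have $d(R^nz,z_n)<\epsilon_L$, hence by \eqref{5-2-2} $|g_t(R^{n+l}z)-g_t(R^{l}z_n)|<\frac{\epsilon}{\#C+1}$... wait, here one must be careful: \eqref{5-2-2} controls $|g_t(R^iy)-g_t(R^iy')|$ for $i=0,\dots,L$, so with $y=R^nz$, $y'=z_n$ and $i=l$ we indeed get $|g_t(R^{n+l}z)-g_t(R^{l}z_n)|<\frac{\epsilon}{\#C+1}$ for $n\in E_L$. Therefore on $E_L$ the inner averages differ by at most $\frac{\epsilon}{\#C+1}$ (since $|\bm{\mu}|\le 1$), and summing against $\frac{1}{n\log N_i}$ over $E_L\cap[1,N_i]$ gives a contribution $\le \frac{\epsilon}{\#C+1}$ in the limit. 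On the complement $n\notin E_L$ we have the crude bound $|g_t(R^{n+l}z)\bm{\mu}(n+l)|\le P$ and $|g_t(R^lz_n)\bm{\mu}(n+l)|=|g_t(R^lz)\bm{\mu}(n+l)|\le P$, so each such term is bounded by $\frac{2P}{n}$; by \eqref{eq-5-2-4} the logarithmic density of $E_L^c$ is at most $\frac{\epsilon}{(\#C+1)P}$, giving a contribution $\le \frac{2P\epsilon}{(\#C+1)P}=\frac{2\epsilon}{\#C+1}$. Adding up: $\frac{\epsilon}{\#C+1}+\frac{2\epsilon}{\#C+1}=\frac{3\epsilon}{\#C+1}$, which is exactly the claimed bound.

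The main obstacle I anticipate is the bookkeeping in the first term: one must verify rigorously that replacing $\frac1n$ by $\frac1{n+l}$ and shifting the summation index does not change the logarithmic average in the limit, and that this holds uniformly for the finitely many $l\le L$. This is routine but requires care with the normalization $\frac{1}{\log N_i}$ and with the tail/boundary contributions, all of which are $O(L\cdot P/\log N_i)\to 0$ or $O(L\cdot P/N_i)\to 0$. Everything else follows directly from the already-established properties \eqref{5-2-2}, \eqref{eq-002}, \eqref{eq-5-2-4}, the bound $\|g_t\|_\infty\le P$, and $|\bm{\mu}|\le 1$.
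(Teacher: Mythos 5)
Your proposal is correct and follows essentially the same route as the paper: the intermediate quantity $J_i(t,L)$ is exactly the paper's $\hat{I}_i(t,L)$, the first difference is shown to vanish by the same reindexing/weight-comparison argument, and the second is split over $E_L$ and its complement with the identical bounds $\frac{\epsilon}{\#C+1}$ (via \eqref{5-2-2}) and $\frac{2\epsilon}{\#C+1}$ (via \eqref{eq-5-2-4} and $\|g_t\|_\infty\le P$).
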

	\begin{proof} For brevity, write $\hat{I}_i(t,L):=\frac{1}{\log N_i}\sum_{n=1}^{N_i}\frac{1}{L}\sum_{l=1}^{L}\frac{g_t(R^{n+l}z)\bm{\mu}(n+l)}{n}$.
		 Then for $i\in\mathbb{N}$ large enough, we have
		\begin{align*}
		&\hskip0.5cm |\frac{1}{\log N_i}\sum_{n=1}^{N_i}\frac{g_t(R^nz)\bm{\mu}(n)}{n}-\hat{I}_i(t,L)|\\
&\leq|\frac{1}{\log N_i}\sum_{n=1}^{N_i}\frac{g_t(R^nz)\bm{\mu}(n)}{n}-\frac{1}{\log N_i}\sum_{n=1}^{N_i}\frac{1}{L}\sum_{l=1}^{L}\frac{g_t(R^{n+l}z)\bm{\mu}(n+l)}{n+l}|\\
		&+\frac{1}{\log N_i}\sum_{n=1}^{N_i}\frac{1}{L}\sum_{l=1}^{L}|\frac{g_t(R^{n+l}z)\bm{\mu}(n+l)}{n}-\frac{g_t(R^{n+l}z)\bm{\mu}(n+l)}{n+l}|\\
		&\leq \frac{2L \|g_t\|_{\infty}}{\log N_i}+\frac{\|g_t\|_{\infty}}{\log N_i}\sum_{n=1}^{\infty}\frac{L}{n^2}.
		\end{align*}
		Note that $\sum_{n=1}^{\infty}\frac{1}{n^2}<\infty$ and $\|g_t\|_{\infty}\le P$, we have
		\begin{align}\label{5-2-7}
		\lim_{i\to\infty}|\frac{1}{\log N_i}\sum_{n=1}^{N_i}\frac{g_t(R^nz)\bm{\mu}(n)}{n}-\hat{I}_i(t,L)|=0.
		\end{align}
		
		Note that $\lim_{i\to\infty}\frac{\sum_{n\in [1,N_i]}\frac{1}{n}}{\log N_i}=1$. Thus, by $\eqref{eq-5-2-4}$, when $i\in\mathbb{N}$ large enough, we have
		\begin{align}\label{5-2-8}
		\frac{1}{\log N_i}\sum_{n\in[1,N_i]\setminus E_L}\frac{1}{n}<\frac{\epsilon}{(\#C+1)P}.
		\end{align}
		So, we have
\begin{small}		
\begin{align*}
		&\hskip0.5cm |I_i(t,L)-\hat{I}_i(t,L)|\leq\frac{1}{\log N_i}\sum_{n=1}^{N_i}\frac{1}{L}\sum_{l=1}^{L}\frac{\vert g_t(R^{l}z_n)-g_t(R^{l}(R^nz))\vert}{n}\\
		&\leq\frac{1}{\log N_i}\sum_{n\in[1,N_i]\setminus E_L}\frac{1}{L}\sum_{l=1}^{L}\frac{\vert g_t(R^{l}z_n)-g_t(R^{l}(R^nz))\vert}{n}\\
		&\ \ \ \ \   \   \   \   \   \   \    \    \    \    \      \  +\frac{1}{\log N_i}\sum_{n\in E_L\cap[1,N_i]}\frac{1}{L}\sum_{l=1}^{L}\frac{\vert g_t(R^{l}z_n)-g_t(R^{l}(R^nz))\vert}{n}\\
		&\overset{\text(\ref{5-2-2})}\leq \left(\frac{2\|g_t\|_{\infty}}{\log N_i}\sum_{n\in[1,N_i]\setminus E_L}\frac{1}{n}\right)+\left(\frac{1}{\log N_i}\sum_{n\in E_L\cap[1,N_i]}\frac{1}{n}\cdot \frac{\epsilon}{\#C+1}\right).
		\end{align*}
\end{small}
		Combining this with \eqref{5-2-8} and the fact $\|g_t\|_{\infty}\le P$, one has that for $i\in\mathbb{N}$ large enough
		\begin{equation}\label{5-2-9}
		|I_i(t,L)-\hat{I}_i(t,L)|<\frac{3\epsilon}{\#C+1}.
		\end{equation}
		Then, by$(\ref{5-2-7})$ and \eqref{5-2-9}, we have
		$\limsup\limits_{i\to\infty}|\frac{1}{\log N_i}\sum_{n=1}^{N_i}\frac{g_t(R^nz)\bm{\mu}(n)}{n}-I_i(t,L)|\leq \frac{3\epsilon}{\#C+1}$. This completes the proof of Lemma \ref{c-1}.
	\end{proof}
	\begin{lem}\label{c-2}
	One can estimate the second part of inequality (\ref{main-equation}) as follows:
		$$\limsup_{i\to\infty}|I_i(t,L)|\leq \frac{2\epsilon}{\#C+1}.$$
	\end{lem}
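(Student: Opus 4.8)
The plan is to split the sum defining $I_i(t,L)$ according to whether $n\in E_L$ or not, and, on the ``good'' part, to turn $g_t$ along the orbit into a pure rotation by using that the whole orbit segment $z_n,Rz_n,\dots,R^Lz_n$ lies in $K\subset Y_0$, where $g_t$ agrees with $f_t^{(P)}$ and where the eigenfunction identity \eqref{ft-rot-eq} is available.

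First I would note that for every $n\in E_L$ one has $z_n\in K_L=\bigcap_{j=0}^L R^{-j}K$, hence $R^lz_n\in K$ for $l=0,1,\dots,L$; since $g_t|_K=f_t^{(P)}|_K$ and $K\subset Y_0$, the relation \eqref{ft-rot-eq} gives $g_t(R^lz_n)=f_t^{(P)}(R^lz_n)=e(lt)f_t^{(P)}(z_n)=e(lt)g_t(z_n)$ for such $n$ and $l$. Recalling that $z_n=z$ when $n\notin E_L$, this yields
\begin{align*}
I_i(t,L)&=\frac{1}{\log N_i}\sum_{n\in E_L\cap[1,N_i]}\frac{g_t(z_n)}{n}\cdot\frac1L\sum_{l=1}^L e(lt)\bm{\mu}(n+l)\\
&\quad+\frac{1}{\log N_i}\sum_{n\in[1,N_i]\setminus E_L}\frac{1}{n}\cdot\frac1L\sum_{l=1}^L g_t(R^lz)\bm{\mu}(n+l).
\end{align*}
For the second sum I would bound $\bigl|\frac1L\sum_{l=1}^L g_t(R^lz)\bm{\mu}(n+l)\bigr|\le\|g_t\|_\infty\le P$ and invoke \eqref{5-2-8}, so that for all large $i$ its modulus is $<P\cdot\frac{\epsilon}{(\#C+1)P}=\frac{\epsilon}{\#C+1}$. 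For the first sum I would bound $|g_t(z_n)|\le\|g_t\|_\infty\le P$ and pull the modulus inside, dominating it by $\frac{P}{\log N_i}\sum_{n=1}^{N_i}\frac1n\bigl|\frac1L\sum_{l=1}^L e(lt)\bm{\mu}(n+l)\bigr|$; by \eqref{L-cont-bound} with $\theta=t$ the $\limsup$ of this quantity is $\le P\cdot\frac{\epsilon}{(\#C+1)P}=\frac{\epsilon}{\#C+1}$. Adding the two estimates gives $\limsup_{i\to\infty}|I_i(t,L)|\le\frac{2\epsilon}{\#C+1}$, which is the claim.

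I do not expect a genuine obstacle inside this lemma. The arithmetic input — cancellation in the short twisted averages $\frac1L\sum_{l}e(lt)\bm{\mu}(n+l)$, uniformly in the phase — has already been packaged into the choice of $L$ through \eqref{L-cont-bound} (which rests on \cite[Theorem~1.3]{MRT}), and the dynamical input — replacing $g_t$ by a rotation along the orbit — is exactly what passing to $K\subset Y_0$ together with \eqref{ft-rot-eq} supplies. The only point needing a little care is the bookkeeping that keeps the ``bad'' times $[1,N_i]\setminus E_L$ of negligible logarithmic mass, and this is precisely the content of \eqref{eq-5-2-4}--\eqref{5-2-8}.
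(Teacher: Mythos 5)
Your proof is correct and follows essentially the same route as the paper: on $E_L$ you use $z_n\in K_L$ together with \eqref{ft-rot-eq} and $g_t|_K=f_t^{(P)}|_K$ to reduce the twisted average to $|g_t(z_n)|\cdot|\frac1L\sum_\ell e(\ell t)\bm{\mu}(n+\ell)|\le P|\frac1L\sum_\ell e(\ell t)\bm{\mu}(n+\ell)|$ and invoke \eqref{L-cont-bound}, while the complement is controlled by \eqref{5-2-8}. This matches the paper's argument step for step.
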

	\begin{proof} For each $n\in E_L$, one has $z_n\in K_L$ and so $R^j (z_n)\in K\subset Y_0$ for $j=0,1,\cdots,L$.
  Thus by \eqref{ft-rot-eq} and the fact $g_t|_K=f_t^{(P)}|_K$, one has
   $$g_t(R^jz_n)=f_t^{(P)}(R^jz_n)=e(jt)f_t^{(P)}(z_n)=e(jt)g_t(z_n) \text{ for }j=0,1,\cdots,L.$$
  Hence
  \begin{align}\label{eq-theta-con-1}
  \begin{split}
  |\frac{1}{L}\sum_{l=1}^{L}g_t(R^{l}z_n)\bm{\mu}(n+l)|&=|g_t(z_n)||\frac{1}{L}\sum_{\ell=1}^{L}e(\ell t)\bm{\mu}(n+\ell)|\\
  &\le P|\frac{1}{L}\sum_{\ell=1}^{L}e(\ell t)\bm{\mu}(n+\ell)|
  \end{split}
  \end{align}
  for $n\in E_L$.

	Note that
		\begin{align*}
		|I_i(t,L)|&\leq\frac{1}{\log N_i}\sum_{n\in  [1,N_i]\setminus E_L}\frac{1}{n}|\frac{1}{L}\sum_{l=1}^{L}g_t(R^{l}z_n)\bm{\mu}(n+l)|\\
		&\ \ \ \ \ \ +\frac{1}{\log N_i}\sum_{n\in E_L\cap [1,N_i]}\frac{1}{n}|\frac{1}{L}\sum_{l=1}^{L}g_t(R^{l}z_n)\bm{\mu}(n+l)|.
        \end{align*}
        By \eqref{eq-theta-con-1} and \eqref{5-2-8}, when $i\in \mathbb{N}$ large enough, one has
        \begin{align*}
		|I_i(t,L)|&\overset{\eqref{eq-theta-con-1}}\leq \frac{1}{\log N_i}\sum_{n\in  [1,N_i]\setminus E_L}\frac{\|g_t\|_{\infty}}{n}+\frac{P}{\log N_i}\sum_{n\in E_L\cap [1,N_i]}\frac{1}{n} |\frac{1}{L}\sum_{\ell=1}^{L}e(\ell t)\bm{\mu}(n+\ell)|\\
		&\overset{\eqref{5-2-8}}\leq \frac{\epsilon}{\#C+1}+\frac{P}{\log N_i}\sum_{n=1}^{N_i}\frac{1}{n}|\frac{1}{L}\sum_{\ell=1}^{L}e(\ell t))\bm{\mu}(n+\ell)|.
		\end{align*}
		Combining this with \eqref{L-cont-bound}, one has $\limsup_{i\to\infty}|I_i(t,L)|\leq \frac{2\epsilon}{\#C+1}$.
This finishes the proof of Lemma \ref{c-2}.
	\end{proof}
	Now, by Lemma \ref{c-1}, Lemma \ref{c-2} and (\ref{main-equation}), we have the second claim, that is \eqref{eq-rat-small-1} holds.
		Together with \eqref{eq-rat-small-1} and \eqref{eq-irr-small-1}, we obtain \eqref{lim-f-7epsilon}. Letting $\epsilon\to 0$ yields
$\displaystyle\lim_{i\to\infty}\frac{1}{\log N_i}\sum_{n=1}^{N_i}\frac{f(R^nz)\bm{\mu}(n)}{n}=0$,
which completes the proof of Proposition \ref{c-0}.

\section{Examples of Almost Countable Spectrum Systems}\label{sec-example}
In this section we exhibit several classes of TDSs whose spectrum is almost countable.
At the same time, using Theorem~\ref{thm-A}, we provide proofs of Theorems~\ref{LSC-group-extension}, \ref{LSC-susp} and \ref{LSC-bounded-m-MPE}.

\subsection{Group extension and suspension flow} First we prove the following Proposition \ref{SZ-group-extension}: a group extension of a TDS having zero entropy and only countably many ergodic measures has zero entropy and almost countable spectrum. Thus Theorem \ref{LSC-group-extension} is a direct corollary of Proposition \ref{SZ-group-extension} and Theorem~\ref{thm-A}.

\begin{prop}\label{SZ-group-extension} Let \(\pi\colon(X,T)\to(Y,R)\) be a group extension between two TDSs. If \((Y,R)\) has zero entropy and only countably many ergodic measures, then $(X,T)$ has zero entropy and almost countable spectrum.
\end{prop}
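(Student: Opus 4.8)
The plan is to show that a group extension $\pi\colon(X,T)\to(Y,R)$ of a zero-entropy TDS with countably many ergodic measures has two properties: zero entropy, and almost countable spectrum. Zero entropy is immediate from the variational principle together with the fact that a group extension by a compact group $K$ does not increase topological entropy (the fibers are homeomorphic to $K$, which is a compact group, hence $h_{\mathrm{top}}(X,T)=h_{\mathrm{top}}(Y,R)=0$; alternatively one can invoke Bowen's inequality for the relative entropy of a group extension, which vanishes because the acting group $K$ has topological entropy zero). So the real content is the spectral statement.

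First I would fix an arbitrary $\mu\in\mathcal{M}(X,T)$ and let $\nu=\pi_*\mu\in\mathcal{M}(Y,R)$. Passing to ergodic decompositions $\mu=\int m\,d\tau(m)$, each ergodic component $m$ pushes forward to an ergodic component of $\nu$; since $(Y,R)$ has only countably many ergodic measures, say $\{\nu_k\}_{k\in\mathbb{N}}$, for $\tau$-a.e.\ $m$ we have $\pi_*m=\nu_k$ for some $k$. So it suffices to bound the spectrum of each ergodic $m\in\mathcal{M}^e(X,T)$ in terms of the spectrum of $(Y,\mathcal{Y},\pi_*m,R)$, which lies in the countable set $\bigcup_k\operatorname{Spec}(Y,\mathcal{Y},\nu_k,R)$. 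The key point is: \emph{for a group extension, $\operatorname{Spec}(X,\mathcal{X},m,T)$ is still countable, and in fact contained in a countable set determined by $Y$ and by $K$}. Concretely, an ergodic group extension $(X,\mathcal{X},m,T)$ over $(Y,\mathcal{Y},\nu_k,R)$ with fiber group a closed subgroup $K'\le K$ is, up to isomorphism, given by a cocycle $\rho\colon Y\to K'$; any eigenfunction of $(X,m,T)$ decomposes via the Peter--Weyl theorem along irreducible representations of $K'$, and each isotypic component is a finite-rank module over $L^2(\nu_k)$ whose eigenvalue equation reduces to a matrix-valued cocycle equation over $(Y,\nu_k)$. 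The upshot is that $\operatorname{Spec}(X,\mathcal{X},m,T)$ is contained in the (countable) set of eigenvalues arising from the countably many systems $(Y,\mathcal{Y},\nu_k,R)$ twisted by the countably many finite-dimensional representations of the (second-countable, hence with countably many irreducibles) compact group $K$. Assembling these over all $k$ and all representations gives a single countable set $C_\mu\subset\mathbb{T}$ (which one can even take independent of $\mu$, purely in terms of $(Y,R)$ and $K$) containing $\operatorname{Spec}(X,\mathcal{X},m,T)$ for $\tau$-a.e.\ $m$.

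The cleanest way to run the argument rigorously, and the step I expect to be the main obstacle, is the spectral reduction for a single ergodic group extension. One approach: let $f$ be an eigenfunction with $U_Tf=\lambda f$, $\|f\|_{L^2(m)}=1$. Since $K$ acts on $X$ by automorphisms commuting with $T$, for each $\kappa\in K$ the function $f\circ\kappa$ is again an eigenfunction with the same eigenvalue $\lambda$, so $\kappa\mapsto f\circ\kappa$ gives a (finite-dimensional, since $E_\lambda$ splits into $K$-isotypic pieces and $f$ lives in finitely many after truncation—or directly because the $K$-orbit of $f$ spans a finite-dimensional $K$-invariant subspace of $E_\lambda$ on which $K$ acts) unitary representation of $K$; decomposing $f$ into its $K$-isotypic components and using that $|f|^2$ is $K$-invariant hence pulls back from $Y$, one writes $f = g\cdot (\text{matrix coefficient of a fixed irreducible }\pi_\ell\text{ of }K)$ where $g$ is $\mathcal{Y}$-measurable-ish; plugging into $U_Tf=\lambda f$ and using $T$-equivariance of the $K$-action yields that $\lambda$ is an eigenvalue of a twisted system $(Y\times\mathbb{C}^{d_\ell},\,\nu_k\times(\text{Haar}),\,R_\rho^{(\ell)})$ built from $\pi_\ell\circ\rho$, an isometric extension of $(Y,\nu_k,R)$ of relative dimension $d_\ell^2$. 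The spectrum of such an extension is countable (it is again an ergodic system with discrete spectrum relative to $Y$ in the appropriate sense, but more simply: it is a compact group extension, and the union over all $k,\ell$ of its spectra is a countable set). The delicate bookkeeping is ensuring the ``finitely many $\pi_\ell$'' and measurability of $g$; invoking the Mackey–Zimmer description of ergodic compact group extensions (every ergodic component of a group extension is itself an ergodic group extension over an ergodic component of the base, by a closed subgroup) makes this precise and is, I think, the natural tool. Once this is in hand, almost countability of the spectrum of $(X,T)$ follows by taking the countable union over the countably many base ergodic measures and the countably many irreducibles of $K$, and Theorem~\ref{thm-A} then yields Theorem~\ref{LSC-group-extension}.
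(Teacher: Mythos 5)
Your reduction to ``a single countable set containing $\operatorname{Spec}(X,\mathcal{X},m,T)$ for $\tau$-a.e.\ ergodic component $m$'' is the right target, and the zero-entropy part is fine, but the central spectral step has a genuine gap and is also far heavier than necessary. Two concrete problems. First, the map $\kappa\mapsto f\circ\kappa$ does \emph{not} define a representation of $K$ on the eigenspace $E_\lambda\subset L^2(X,\mathcal{X},m)$: for $\kappa\in K$ one only gets that $f\circ\kappa$ is an eigenfunction of $(X,\mathcal{X},(\kappa^{-1})_*m,T)$, which is a \emph{different} measure unless $\kappa$ stabilizes $m$; only the (possibly proper, closed) stabilizer of $m$ in $K$ acts on $L^2(m)$, so the Peter--Weyl/isotypic decomposition you invoke is not available as stated (likewise ``$|f|^2$ is $K$-invariant'' is unjustified). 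Second, even granting the Mackey--Zimmer description of each ergodic component as a skew product over $(Y,\mathcal{Y},\nu_k,R)$ by a closed subgroup and a cocycle $\rho$, the ``twisted systems'' whose eigenvalues you collect depend on $\rho$, which a priori varies with the ergodic component $m$; to get one countable set covering all components above a fixed $\nu_k$ you still need to know that these components are mutually conjugate, so that their cocycles (hence their spectra) agree. That conjugacy is exactly the missing lemma --- and once you have it, the representation theory is superfluous.

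The paper proves precisely that conjugacy statement and nothing more: fixing a lift $\phi\colon\mathcal{M}^e(Y,R)\to\mathcal{M}^e(X,T)$ with $\pi_*(\phi(\nu))=\nu$, it shows $\mathcal{M}^e(X,T)=\{g_*\phi(\nu):\nu\in\mathcal{M}^e(Y,R),\ g\in K\}$ by picking generic points $x\in\mathrm{Gen}(\mu)$ and $x'\in\mathrm{Gen}(\phi(\pi_*\mu))$ in the same fiber and writing $x=gx'$. Since $g\in\operatorname{Aut}(X,T)$ gives an isomorphism $(X,\mathcal{X},\phi(\nu),T)\cong(X,\mathcal{X},g_*\phi(\nu),T)$, the spectra coincide, hence every ergodic $m$ satisfies $\operatorname{Spec}_{\mathrm{irr}}(X,\mathcal{X},m,T)\subset\bigcup_{\nu\in\mathcal{M}^e(Y,R)}\operatorname{Spec}_{\mathrm{irr}}(X,\mathcal{X},\phi(\nu),T)$, a countable union of countable sets (the spectrum of any single invariant measure is countable by separability of $L^2$). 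No Peter--Weyl, no cocycle analysis. If you want to salvage your route you must first prove the conjugacy of ergodic lifts; but at that point you are already done without the rest.
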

\begin{proof} First we can define a lift map \(\phi\colon\mathcal{M}^e(Y,R)\to\mathcal{M}^e(X,T)\) such that \(\pi_*(\phi(\nu))=\nu\) for every \(\nu\in\mathcal{M}^e(Y,R)\).
Since \(\pi\) is a group extension, there exists a compact subgroup \(K\) of \(\mathrm{Aut}(X,T)\) such that \(R_\pi=\{(x,kx):x\in X,\,k\in K\}\).

Next we prove
\begin{equation}\label{eq-x-y}
\mathcal{M}^e(X,T)=\{g_*\phi(\nu):\nu\in\mathcal{M}^e(Y,R),\,g\in K\}.
\end{equation}
Indeed, for any \(\nu\in\mathcal{M}^e(Y,R)\) and \(g\in K\) it is clear that \(g_*\phi(\nu)\in\mathcal{M}^e(X,T)\).
Conversely, given \(\mu\in\mathcal{M}^e(X,T)\), set \(\nu_0=\pi_*(\mu)\). Then \(\pi_*(\phi(\nu_0))=\nu_0=\pi_*(\mu)\), so
\[
\nu_0\bigl(\pi(\mathrm{Gen}(\mu))\bigr)=\nu_0\bigl(\pi(\mathrm{Gen}(\phi(\nu_0)))\bigr)=1.
\]
Hence there exist \(x\in\mathrm{Gen}(\mu)\) and \(x'\in\mathrm{Gen}(\phi(\nu_0))\) with \(\pi(x)=\pi(x')\). Choose \(g\in K\) such that \(x=gx'\). Then
\begin{align*}
\mu &=\lim_{N\to+\infty}\frac1N\sum_{n=0}^{N-1}\delta_{T^n x}
     =\lim_{N\to+\infty}\frac1N\sum_{n=0}^{N-1}\delta_{T^n gx'}\\
    &=\lim_{N\to+\infty}g_*\biggl(\frac1N\sum_{n=0}^{N-1}\delta_{T^n x'}\biggr)
     =g_*\phi(\nu_0)\\
    &\in \{g_*\phi(\nu):\nu\in\mathcal{M}^e(Y,R),\,g\in K\}.
\end{align*}
Thus \eqref{eq-x-y} holds.

Observe that
\begin{equation}\label{eq-x-y-c}
\mathrm{Spec}_{\mathrm{irr}}(X,\mathcal{X},g_*\phi(\nu),T)
=\mathrm{Spec}_{\mathrm{irr}}(X,\mathcal{X},\phi(\nu),T)
\end{equation}
is a countable set for every \(\nu\in\mathcal{M}^e(Y,R)\) and every \(g\in K\). Since \(\mathcal{M}^e(Y,R)\) is countable, the set
\[
C:=\bigcup_{\nu\in\mathcal{M}^e(Y,R)}\mathrm{Spec}_{\mathrm{irr}}(X,\mathcal{X},\phi(\nu),T)
\]
is likewise countable.

By \eqref{eq-x-y} and \eqref{eq-x-y-c} we therefore have
\begin{equation}\label{eq-0}
\mathrm{Spec}_{\mathrm{irr}}(X,\mathcal{X},\mu,T)\subset C
\quad\text{for every }\mu\in\mathcal{M}^e(X,T).
\end{equation}
Consequently \((X,T)\) has almost countable spectrum. Finally, since group extensions preserve entropy and \((Y,R)\) has zero entropy, \((X,T)\) also has zero entropy. This completing the proof of Proposition \ref{SZ-group-extension}.
\end{proof}
\begin{rem} Let $(X,T)$ be a  be a TDS with zero entropy and countable many ergodic measures. For a continuous function $h:X\to \mathbb{R}$, define $T_h$ on $X\times \mathbb{T}$ by
$$T_h(x,y)=(Tx,h(x)+y).$$
Then by Theorem \ref{LSC-group-extension} the logarithmic Sarnak conjecture holds for $(X\times\mathbb{T},T_h)$.
\end{rem}

Next we prove the following Proposition~\ref{SZ-susp}: time-one maps of continuous suspension flows of a TDS with zero entropy and only countably many ergodic measures have zero entropy and almost countable spectrum. Thus Theorem~\ref{LSC-susp} is a direct corollary of Proposition~\ref{SZ-susp} and Theorem~\ref{thm-A}.

\begin{prop}\label{SZ-susp}
Let $(X,T)$ be a TDS with zero entropy and countably many ergodic measures. For any continuous function $r:X\to(0,+\infty)$, the time-one map $(X_r,\varphi_1)$ has zero entropy and almost countable spectrum.
\end{prop}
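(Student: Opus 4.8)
The plan is to reduce the statement to the case of an ergodic measure, analyze the eigenvalues of the time-one map $\varphi_1$ of the suspension flow in terms of the dynamics of $(X,T)$ and the roof function $r$, and show that the collection of all possible irrational eigenvalues, as we range over ergodic measures, lies in a fixed countable set. Since zero entropy for suspension flows is classical (the time-one map of a suspension over a zero-entropy system has zero entropy, by the Abramov formula or by the variational principle), the only real content is almost countable spectrum.

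First I would recall the structure of invariant measures on $X_r$. Every $\varphi_1$-invariant ergodic measure on $X_r$ arises from a $T$-invariant ergodic measure $\mu$ on $X$ via the standard construction $\mu_r = (\mu \times \mathrm{Leb})/\int r\, d\mu$, restricted to the fundamental domain and pushed to $X_r$; conversely the flow-invariant measures on $X_r$ are exactly these, and the $\varphi_1$-ergodic components refine the $\Phi$-ergodic ones (the flow may fail to be ergodic for the time-one map, but only through the eigenvalue structure). Because $(X,T)$ has only countably many ergodic measures, say $\{\mu_k\}_{k\in\mathbb N}$, the flow $\Phi$ on $X_r$ has only countably many ergodic measures $\{\mu_{k,r}\}$. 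The key point is then to understand $\mathrm{Spec}(X_r,\varphi_1,m)$ for $m$ a $\varphi_1$-ergodic component of some $\mu_{k,r}$.

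The heart of the matter is the eigenvalue analysis. For a suspension flow, a number $e(s)\in\mathbb T$ ($s\in\mathbb R$) is a point in the $L^2$-spectrum of the flow $\Phi$ (with respect to $\mu_{k,r}$) precisely when there is a measurable solution $g$ to the cohomological equation relating $g\circ T$ and $g$ through a coboundary involving $s\cdot r$; equivalently $e(s)$ is an eigenvalue of the flow iff $e(s\, r(x))$ is (up to a multiplicative eigenvalue of $(X,T,\mu_k)$) an eigenvalue-type cocycle over $(X,T,\mu_k)$. The time-one map $\varphi_1$ then has $e(\theta)$ as an eigenvalue iff $\theta \equiv s \pmod 1$ for some flow-eigenvalue $s$, together with the (countably many) eigenvalues of $(X,T,\mu_k)$ themselves appearing as eigenvalues of $\varphi_1$ via functions constant on flow orbits. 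The crucial observation is that for a fixed $\mu_k$ the set of flow-eigenvalues $s$ is a subgroup of $\mathbb R$, and — because $L^2(X_r,\mu_{k,r})$ is separable — it is a countable subgroup; hence the contribution to $\mathrm{Spec}_{\mathrm{irr}}(X_r,\varphi_1,m)$ from each $\mu_k$ is countable, and taking the union over the countably many $k$ (and over the countably many $\varphi_1$-ergodic components of each $\mu_{k,r}$, whose extra eigenvalues are roots of unity and hence rational) gives a single countable set $C\subset\mathbb T$ containing $\mathrm{Spec}_{\mathrm{irr}}(X_r,\varphi_1,m)$ for every ergodic $m$.

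Finally I would assemble these: given an arbitrary $\varphi_1$-invariant measure on $X_r$, disintegrate it into ergodic components; each component is an ergodic component of some $\mu_{k,r}$ (since the $\Phi$-invariant measures are governed by the countably many $\mu_k$, and $\varphi_1$-invariant measures project to $\Phi$-invariant ones after averaging over a flow period, or one argues directly that any $\varphi_1$-ergodic measure on $X_r$ has a well-defined "base" measure among the $\mu_k$); therefore its irrational spectrum lies in $C$. This is exactly the definition of almost countable spectrum, and combined with zero entropy it completes the proof. \textbf{The main obstacle} I anticipate is the bookkeeping around ergodic components of $\varphi_1$ versus of the flow $\Phi$: one must verify carefully that a $\varphi_1$-ergodic measure on $X_r$ which is not $\Phi$-ergodic still sits over one of the countably many base measures, and that the "new" eigenvalues created by passing from the flow to the time-one map (the eigenvalues coming from the finite cyclic group permuting the $\varphi_1$-ergodic pieces of a single $\Phi$-ergodic measure) are roots of unity, hence rational and harmless for the irrational spectrum. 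Making the cohomological-equation characterization of flow eigenvalues precise in the merely-continuous (non-uniquely-ergodic) setting, and confirming countability of the eigenvalue subgroup via separability, are the technical points requiring care.
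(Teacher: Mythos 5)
Your outline reaches the right conclusion and the reduction steps (Ambrose--Kakutani bijection between $\mathcal{M}(X,T)$ and $\mathcal{M}(X_r,\Phi)$, Abramov for zero entropy, averaging a $\varphi_1$-ergodic measure over $\int_0^1(\varphi_t)_*(\cdot)\,dt$ to locate its flow-ergodic ``base'') are exactly those of the paper. Where you genuinely diverge is the spectral step: you propose to control eigenvalues of the time-one map by the cohomological-equation description of flow eigenvalues and the countability of the flow-eigenvalue subgroup of $\mathbb{R}$. The paper avoids all eigenvalue analysis: since the stabilizer $\{t:(\varphi_t)_*m=m\}$ issue never has to be confronted, it simply observes that every $\varphi_1$-ergodic component of a fixed flow-ergodic measure $\tilde\mu_k$ has the form $(\varphi_s)_*m_k$ for one chosen component $m_k$, and $\varphi_s$ commutes with $\varphi_1$, so all components are measure-theoretically isomorphic and share the single countable set $\operatorname{Spec}(X_r,\mathcal{X}_r,m_k,\varphi_1)$; taking $C=\bigcup_k\operatorname{Spec}(X_r,\mathcal{X}_r,m_k,\varphi_1)$ finishes the proof. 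Your route can be made to work (if $f\circ\varphi_1=e(\theta)f$ on an ergodic component, one-dimensionality of eigenspaces gives $f\circ\varphi_t=c(t)f$ with $c$ a character, and $f$ extends to a flow eigenfunction $F$ with $e(\lambda)=e(\theta)$), but this is precisely the ``bookkeeping'' you defer, and it is strictly harder than the paper's two-line isomorphism argument.

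Two specific corrections. First, your parenthetical claim that each $\mu_{k,r}$ has only \emph{countably many} $\varphi_1$-ergodic components is false in general: for the constant roof $r\equiv1$ over an irrational rotation, $\varphi_1=T\times\mathrm{id}$ on $\mathbb{T}\times[0,1)$ and the components $\mathrm{Leb}\times\delta_s$ form a continuum. This does not sink your argument, because your flow-eigenvalue bound is uniform over components, but the union ``over the countably many components'' should not appear. Second, the worry about ``new'' eigenvalues being roots of unity is a red herring: in fact \emph{every} eigenvalue of a $\varphi_1$-ergodic component of an ergodic flow arises as $e(\lambda)$ for a flow eigenvalue $\lambda$, irrational or not; and in any case the paper's argument shows you never need to separate the rational and irrational parts here, since the full spectrum of each chosen component is already countable.
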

\begin{proof}
Denote by $\mathcal{M}(X_r,\Phi)$ the space of $\Phi$-invariant probability measures on $X_r$. It is a classical result of Ambrose--Kakutani \cite{AK} that, writing $\mathrm{Leb}$ for one-dimensional Lebesgue measure, the map $\mu\mapsto\tilde{\mu}$ defined by
\(
\tilde{\mu}:=(\mu\times\mathrm{Leb})|_{X_r}
\)
is a bijection from $\mathcal{M}(X,T)$ onto $\mathcal{M}(X_r,\Phi)$; explicitly, for every $f\in C(X_r)$,
\[
\int f\,d\tilde{\mu}=\frac{\int_X\!\Bigl(\int_0^{r(x)}f(x,s)\,d\mathrm{Leb}(s)\Bigr)\,d\mu(x)}{\int_X r(x)\,d\mu(x)}.
\]
In particular,
\[
\mathcal{M}^e(X_r,\phi)=\{\tilde{\mu}:\mu\in\mathcal{M}^e(X,T)\}
\]
is countable, because $(X,T)$ has only countably many ergodic measures.

For every $\mu\in\mathcal{M}(X,T)$ the Abramov formula \cite{A} gives
\[
h_{\tilde{\mu}}(\varphi_1)=\frac{h_\mu(T)}{\int_X r(x)\,d\mu(x)}=0,
\]
the last equality holding since $(X,T)$ has zero entropy. By the variational principle, $h_{\mathrm{top}}(X_r,\varphi_1)=0$.

Next we show that $(X_r,\varphi_1)$ has almost countable spectrum. Fix $\mu\in\mathcal{M}^e(X,T)$ and let
\(\tilde{\mu}=\int_\Omega\tilde{\mu}_\omega\,d\xi(\omega)\)
be the ergodic decomposition of $\tilde{\mu}$ with respect to $\varphi_1$. Then
\(\tilde{\mu}=\int_\Omega\!\Bigl(\int_0^1(\varphi_t)_*\tilde{\mu}_\omega\,dt\Bigr)\,d\xi(\omega)\).
Since $\tilde{\mu}$ is $\Phi$-ergodic and each measure $\int_0^1(\varphi_t)_*\tilde{\mu}_\omega\,dt$ is $\Phi$-ergodic, we have $\int_0^1(\varphi_t)_*\tilde{\mu}_\omega\,dt=\tilde{\mu}$ for $\xi$-a.e.\ $\omega\in\Omega$. Hence we can choose a $\varphi_1$-ergodic measure $m_{\tilde{\mu}}$ (in fact $m_{\tilde{\mu}}=\tilde{\mu}_\omega$ for some $\omega$) such that $\int_0^1(\varphi_t)_*m_{\tilde{\mu}}\,dt=\tilde{\mu}$.

Put
\[
C:=\bigcup_{\mu\in\mathcal{M}^e(X,T)}\mathrm{Spec}(X_r,\mathcal{X}_r,m_{\tilde{\mu}},\varphi_1),
\]
where $\mathcal{X}_r$ is the Borel $\sigma$-algebra of $X_r$. Then $C$ is a countable subset of $\mathbb{T}$, because $\mathcal{M}^e(X,T)$ is countable and each set $\mathrm{Spec}(X_r,\mathcal{X}_r,m_{\tilde{\mu}},\varphi_1)$ is countable.

We now show that for every $\nu\in\mathcal{M}(X_r,\varphi_1)$,
\[
\mathrm{Spec}(X_r,\mathcal{X}_r,\nu,\varphi_1)\subset C,
\]
which clearly implies that $(X_r,\varphi_1)$ has almost countable spectrum.

Indeed, for any $\nu\in\mathcal{M}(X_r,\varphi_1)$ the measure $\int_0^1(\varphi_t)_*\nu\,dt$ is $\Phi$-ergodic, so there exists $\mu_\nu\in\mathcal{M}^e(X,T)$ such that $\tilde{\mu}_\nu=\int_0^1(\varphi_t)_*\nu\,dt$. Thus $\int_0^1(\varphi_t)_*\nu\,dt$ is the ergodic decomposition of $\tilde{\mu}_\nu$ and coincides with $\int_0^1(\varphi_t)_*m_{\tilde{\mu}_\nu}\,dt=\tilde{\mu}_\nu$. Consequently there is an $s\in[0,1)$ such that $\nu=(\varphi_s)_*m_{\tilde{\mu}_\nu}$. Since $\varphi_s\circ\varphi_1=\varphi_1\circ\varphi_s$, we obtain
\begin{align*}
\mathrm{Spec}(X_r,\mathcal{X}_r,\nu,\varphi_1)
&=\mathrm{Spec}(X_r,\mathcal{X}_r,(\varphi_s)_*m_{\tilde{\mu}_\nu},\varphi_1)\\
&=\mathrm{Spec}(X_r,\mathcal{X}_r,m_{\tilde{\mu}_\nu},\varphi_1)\subseteq C.
\end{align*}
Therefore $\mathrm{Spec}(X_r,\mathcal{X}_r,\nu,\varphi_1)\subseteq C$, completing the proof of Proposition~\ref{SZ-susp}.
\end{proof}

\subsection{Finite maximal pattern entropy} The aim of this subsection is to prove the following Proposition \ref{lem-mb-mpe-zc}: if a TDS which each invariant measure has  finite maximal pattern entropy, then it has zero entropy and almost countable spectrum.  Thus Theorem \ref{LSC-bounded-m-MPE}
 is a direct corollary of Proposition  \ref{lem-mb-mpe-zc} and Theorem \ref{thm-A}.

To prove Proposition~\ref{lem-mb-mpe-zc}, we need several additional lemmas. The first one asserts that for systems with discrete spectrum, the spectrum of almost every ergodic component is contained in the spectrum of the system.
\begin{lem}\label{lem-sp-rho=rhopmega}
	Let $(X,\mathcal{X},\mu,T)$ be a discrete spectrum system. If $\mu=\int_{\Omega} \mu_\omega d\xi(\omega)$ is the ergodic decomposition of $\mu$, then
 $\text{Spec}(X,\mathcal{X},\mu_\omega,T)\subset \text{Spec}(X,\mathcal{X},\mu,T)$, for $\xi$-a.e. $\omega$ in $\Omega$.
 \end{lem}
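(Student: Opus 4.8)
The plan is to manufacture, out of the Kronecker algebra of $\mu$, one countable set of bounded eigenfunctions that is rich enough to describe $\mu$ and then to transport it through the ergodic decomposition $\mu=\int_{\Omega}\mu_{\omega}\,d\xi(\omega)$.

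\emph{Step 1: a countable algebra of bounded eigenfunctions.} Since $(X,\mathcal X,\mu,T)$ has discrete spectrum, $L^{2}(X,\mathcal X,\mu)=L^{2}\bigl(X,\mathcal K(X,\mathcal X,\mu,T),\mu\bigr)$, so by Lemma~\ref{dec-rat-part}(1) there is a countable orthonormal basis $\{f^{\lambda}_{i}:\lambda\in\operatorname{Spec}(X,\mathcal X,\mu,T),\ i\in I_{\lambda}\}$ of $L^{2}(\mu)$ with $U_{T}f^{\lambda}_{i}=\lambda f^{\lambda}_{i}$. Replacing each $f^{\lambda}_{i}$ by the truncations $f^{\lambda}_{i}\,\mathbf 1_{\{|f^{\lambda}_{i}|\le P\}}$, $P\in\mathbb N$ (these are again eigenfunctions with eigenvalue $\lambda$, by the remark preceding Proposition~\ref{zimmer}, since $\{|f^{\lambda}_{i}|\le P\}$ is $T$-invariant and their $L^{2}(\mu)$-limit as $P\to\infty$ is $f^{\lambda}_{i}$), then closing the resulting countable set under finite products and complex conjugation and adjoining the constant $1$, and finally discarding the functions that vanish $\mu$-a.e., I obtain a countable set $D$ of bounded eigenfunctions of $(X,\mathcal X,\mu,T)$ with the following properties: every $h\in D$ satisfies $U_{T}h=\lambda_{h}h$ with $\lambda_{h}\in\operatorname{Spec}(X,\mathcal X,\mu,T)$ (a nonzero product of eigenfunctions is an eigenfunction whose eigenvalue, being witnessed by that product, lies in $\operatorname{Spec}(\mu)$; and $\operatorname{Spec}(\mu)$ contains $1$ and is closed under conjugation); the $\mathbb C$-linear span $\mathcal B$ of $D$ is a unital, conjugation-invariant subalgebra of $L^{\infty}(X,\mathcal X,\mu)$; and, because the closed linear span of $D$ already contains the orthonormal basis $\{f^{\lambda}_{i}\}$, one has $\overline{\mathcal B}^{\,L^{2}(\mu)}=L^{2}(\mu)$, hence $\sigma(D)=\mathcal X$ mod $\mu$. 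Put $S:=\{\lambda_{h}:h\in D\}\subseteq\operatorname{Spec}(X,\mathcal X,\mu,T)$, a countable set.

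\emph{Step 2: passage to ergodic components.} For each $h\in D$ the identity $h\circ T=\lambda_{h}h$ holds $\mu$-a.e., hence $\mu_{\omega}$-a.e.\ for $\xi$-a.e.\ $\omega$; as $D$ is countable, for $\xi$-a.e.\ $\omega$ this holds simultaneously for all $h\in D$, so in $L^{2}(\mu_{\omega})$ every $h\in D$ is either $0$ or an eigenfunction with eigenvalue $\lambda_{h}\in S$. Separately, $\mathcal X$ is countably generated; fix a countable family $\{B_{j}\}_{j}$ generating $\mathcal X$. Since $\sigma(D)=\mathcal X$ mod $\mu$, for each $j$ choose $A_{j}\in\sigma(D)$ with $\mu(A_{j}\triangle B_{j})=0$; then $\int\mu_{\omega}(A_{j}\triangle B_{j})\,d\xi(\omega)=\mu(A_{j}\triangle B_{j})=0$, so for $\xi$-a.e.\ $\omega$ we have $\mu_{\omega}(A_{j}\triangle B_{j})=0$ for all $j$, whence $\sigma(D)=\mathcal X$ mod $\mu_{\omega}$. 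For any such $\omega$, applying Proposition~\ref{zimmer} to the system $(X,\mathcal X,\mu_{\omega},T)$ and to the conjugation-invariant subalgebra $\mathcal B$ of bounded functions gives $\overline{\mathcal B}^{\,L^{2}(\mu_{\omega})}=L^{2}(X,\sigma(D),\mu_{\omega})=L^{2}(\mu_{\omega})$; that is, $L^{2}(\mu_{\omega})$ is the closed linear span of those $h\in D$ that are eigenfunctions of $(X,\mathcal X,\mu_{\omega},T)$, all with eigenvalues in $S$.

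\emph{Step 3: conclusion.} On the $\xi$-conull set of $\omega$ where both properties above hold, $(X,\mathcal X,\mu_{\omega},T)$ has discrete spectrum (its $L^{2}$ is spanned by eigenfunctions), and if $\lambda$ were an eigenvalue of $(X,\mathcal X,\mu_{\omega},T)$ not lying in $S$, its eigenspace in $L^{2}(\mu_{\omega})$ would be orthogonal to the eigenspace of every $\lambda'\in S$, hence orthogonal to $L^{2}(\mu_{\omega})$ — impossible. Therefore $\operatorname{Spec}(X,\mathcal X,\mu_{\omega},T)\subseteq S\subseteq\operatorname{Spec}(X,\mathcal X,\mu,T)$ for $\xi$-a.e.\ $\omega$, as asserted. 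The only genuinely delicate step is the transfer ``$\sigma(D)=\mathcal X$ mod $\mu\Rightarrow\sigma(D)=\mathcal X$ mod $\mu_{\omega}$ for $\xi$-a.e.\ $\omega$'', which forces me to fix a single countable generating family of $\mathcal X$ \emph{before} invoking the ergodic decomposition; the rest is routine. (Alternatively, the inclusion $\operatorname{Spec}(\mu_{\omega})\subseteq\operatorname{Spec}(\mu)$ for a.e.\ $\omega$ can be obtained by combining the countability of $\operatorname{Spec}(\mu)$ with \cite[Theorem~1.1]{Ru}, but the argument above is self-contained.)
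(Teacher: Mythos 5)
Your proof is correct and shares the paper's overall skeleton: extract a countable family of eigenfunctions of $(X,\mathcal X,\mu,T)$ whose span is dense in $L^{2}(\mu)$, note that each member remains an eigenfunction (or vanishes) in $L^{2}(\mu_{\omega})$ for $\xi$-a.e.\ $\omega$, show the span is still dense in $L^{2}(\mu_{\omega})$ for a.e.\ $\omega$, and finish by orthogonality of eigenspaces for distinct eigenvalues. Where you genuinely diverge is in the density-transfer step. The paper works directly with the orthonormal basis $\{f_i\}$ and argues quantitatively: each member of a countable dense subset of $C(X)$ is approximated in $L^{2}(\mu)$ by finite linear combinations with error $1/n^{2}$, Markov's inequality gives error $1/n$ for $\omega$ in a set of $\xi$-measure at least $1-1/n^{2}$, and a $\limsup$ of these sets does the rest. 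You instead enlarge the basis to a countable unital, conjugation-closed algebra $D$ of bounded eigenfunctions (via truncation and products, correctly observing that a nonvanishing product of eigenfunctions witnesses its eigenvalue, so $S\subseteq\operatorname{Spec}(X,\mathcal X,\mu,T)$), establish $\sigma(D)=\mathcal X$ mod $\mu$, transfer this measure-algebra identity to a.e.\ $\mu_{\omega}$ via a fixed countable generating family of Borel sets, and then apply Proposition~\ref{zimmer} componentwise. Both routes are sound; yours avoids the paper's explicit estimates at the cost of the algebraic bookkeeping and of the identification $\overline{\mathcal B}^{\,L^{2}(\mu_{\omega})}=L^{2}(X,\sigma(D),\mu_{\omega})$, whose inclusion ``$\supseteq$'' is a standard monotone-class fact implicit in Proposition~\ref{zimmer} but would merit a sentence. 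As you note, your $\sigma$-algebra transfer plays exactly the role of the paper's Markov-inequality claim, and fixing the countable generating family before decomposing is indeed the one point where care is needed.
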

\begin{proof} Since $(X,\mathcal{X},\mu,T)$ is discrete spectrum,
$$L^2(X,\mathcal{X},\mu)=L^2(X,\mathcal{K}(X,\mathcal{X},\mu,T),\mu).$$
 Thus by Lemma~\ref{dec-rat-part}~(1), there exists a countable orthonormal  basis $\mathcal{F}:=\{f_i\}_{i\in I}$ of $L^2(X,\mathcal{X},\mu)$ such that  each $f_i$ is an eigenfunction  associated to some eigenvalue $\lambda_i$. Especially, since $\mathcal{F}$ is countable, it follows that there exists a full measure subset $\Omega'$ of $\Omega$ such that for $\omega\in \Omega'$, each $f_i$ is also an eigenfunction of $(X,\mathcal{X},\mu_\omega,T)$  associated to $\lambda_i$.
	
	We claim that for every $h\in C(X)$, for almost all $\omega\in \Omega'$, $h$ belong to  the closure of span of $\mathcal{F}$ in $L^2(\mu_\omega)$. Fix $h\in C(X)$, since $(X,\mathcal{X},\mu,T)$ has discrete spectrum for each $n\in \mathbb{N}$ there exists $K_n\in\N$ and $p_1^{(n)},\cdots,p_{K_n}^{(n)}\in\mathbb{C}$ such that
	$$\|f-\sum_{j=1}^{K_n}p_i^{(n)}f_i\|_{L^2(\mu)}\le 1/n^2.$$
	This implies
		$$\int_{\Omega}\|f-\sum_{j=1}^{K_n}p_i^{(n)}f_i\|_{L^2(\mu_\omega)}^2d\xi(\omega)\le 1/n^4.$$
		Put
		$$\Omega_n:=\{\omega\in \Omega: \|f-\sum_{j=1}^{K_n}p_i^{(n)}f_i\|_{L^2(\mu_\omega)}\le 1/n \}.$$ Then, by Markov's inequality (see for example (5.30) in \cite{B95}) $\xi(\Omega_n)\ge 1-1/n^2$.
		It is easy to see that for $\omega\in \left(\cap_{N\in\N}\cup_{n\ge N}\Omega_n\right)\cap \Omega'$, $h$ belong to  the closure of span of $\mathcal{F}$ in $L^2(\mu_\omega)$. We prove the claim since $ \left(\cap_{N\in\N}\cup_{n\ge N}\Omega_n\right)\cap \Omega'$ has full measure and every element in $\mathcal{F}$ is an eigenfunction of $(X,\mathcal{X}, \mu_\omega,T)$ for  $\omega\in \Omega'$.
		
		Now, since $C(X)$ is separable, we can find a countable dense subset $\mathcal{H}$ of $C(X)$. Together with the claim above, for  almost all $\omega\in \Omega'$, $\mathcal{H}$ is in the closure of span of $\mathcal{F}$ in $L^2(\mu_\omega)$. Since for every $\omega\in\Omega$, $\mathcal{H}$ is dense in $L^2(\mu_\omega)$, it follows that for  almost all $\omega\in \Omega'$, $L^2(\mu_\omega)$ is the closure of span of $\mathcal{F}$. Since every $f\in \mathcal{F}$ is an eigenfunction  associated to some spectrum in $\text{Spec}(X,\mathcal{X},\mu,T)$, the lemma follows.
	\end{proof}

Let \(X\) be a compact metric space and \(\mu\in\mathcal{M}(X)\). \(\mu\) is \emph{atomic} if there exists a countable set \(E\subset X\) such that \(\mu(E)=1\). \(\mu\) is \emph{atomless} if \(\mu(\{x\})=0\) for every \(x\in X\). For any \(\mu\in\mathcal{M}(X)\) define \(E=\{x\in X:\mu(\{x\})>0\}\).
Then \(E\) is a Borel set, at most countable, and the restriction \(\mu|_E\) is atomic while \(\mu|_{X\setminus E}\) is atomless.
By Lebesgue's decomposition theorem,
\(\mu=\mu(E)\cdot\mu|_E+\bigl(1-\mu(E)\bigr)\cdot\mu|_{X\setminus E}\),
where the two restricted measures are normalized to probabilities.

The following result should be regarded as standard; for completeness we include a proof.
\begin{lem}\label{lem-1}
Let $\mu\in\mathcal{M}(X)$ and let $\{\alpha_i\}_{i\in\mathbb{N}}$ be a sequence of measurable partitions of $X$ whose diameters tend to zero. If $\mu$ is not an  atomic measure, then $\lim_{i\to\infty}H_\mu(\alpha_i)=+\infty$.
\end{lem}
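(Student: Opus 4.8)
The plan is to reduce the statement to the case of an atomless probability measure and then combine a ``uniform non-atomicity'' estimate (where compactness enters) with a one-line entropy computation.

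\textbf{Step 1: peel off the atomic part.} Let $E_{0}:=\{x\in X:\mu(\{x\})>0\}$. This set is at most countable, hence Borel, and since $\mu$ is \emph{not} atomic we have $c:=\mu(X\setminus E_{0})>0$. Write $X':=X\setminus E_{0}$ and let $\rho(\cdot):=\mu(\cdot\cap X')/c$, an atomless Borel probability measure on $X$. Applying the standard fact (recalled in the Preliminaries, with $\mathcal D=\sigma(\gamma)$ and $\mathcal D=\{\emptyset,X\}$) that conditioning decreases entropy, with the two-element partition $\gamma=\{E_{0},X'\}$, together with the elementary identity for conditional entropy given a finite partition and nonnegativity of entropy, gives
\[
H_{\mu}(\alpha_{i})\ \ge\ H_{\mu}(\alpha_{i}\mid\gamma)\ =\ \mu(E_{0})\,H_{\mu(\cdot\mid E_{0})}(\alpha_{i})+c\,H_{\rho}(\alpha_{i})\ \ge\ c\,H_{\rho}(\alpha_{i})
\]
for every $i$ (the case $c=1$ being immediate). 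So it suffices to prove $H_{\rho}(\alpha_{i})\to+\infty$ for the atomless probability measure $\rho$.

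\textbf{Step 2: uniform non-atomicity.} I claim that for every $\varepsilon>0$ there is $\delta>0$ such that $\rho(B)<\varepsilon$ for every Borel set $B\subset X$ with $\mathrm{diam}(B)<\delta$. Suppose not: then there are nonempty Borel sets $B_{n}$ with $\mathrm{diam}(B_{n})\to 0$ and $\rho(B_{n})\ge\varepsilon$. Pick $x_{n}\in B_{n}$; by compactness of $X$ a subsequence $x_{n_{j}}\to x^{*}$. For each fixed $r>0$ we have $B_{n_{j}}\subset B(x^{*},r)$ once $\mathrm{diam}(B_{n_{j}})+d(x_{n_{j}},x^{*})<r$, so $\rho(B(x^{*},r))\ge\varepsilon$ for all such $j$; letting $r\to 0$ and using continuity from above ($\{x^{*}\}=\bigcap_{m}B(x^{*},1/m)$, $\rho$ finite) yields $\rho(\{x^{*}\})\ge\varepsilon$, contradicting atomlessness. (Alternatively one can argue directly with a finite subcover of $X$ by balls $B(x,r_x)$ chosen so that $\rho(\overline{B}(x,2r_x))<\varepsilon$.)

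\textbf{Step 3: entropy bound and conclusion.} Fix $M>0$ and choose $\varepsilon\in(0,1)$ with $-c\log\varepsilon\ge M$; let $\delta>0$ be as in Step 2. Since $\mathrm{diam}(\alpha_{i})\to 0$, for all large $i$ every atom $A\in\alpha_{i}$ has $\mathrm{diam}(A)<\delta$, hence $\rho(A)<\varepsilon$ and therefore $-\rho(A)\log\rho(A)=\rho(A)\log(1/\rho(A))\ge\rho(A)\log(1/\varepsilon)$. Summing over the (finitely many) atoms and using $\sum_{A\in\alpha_{i}}\rho(A)=1$ gives $H_{\rho}(\alpha_{i})\ge\log(1/\varepsilon)$, and combined with Step 1, $H_{\mu}(\alpha_{i})\ge -c\log\varepsilon\ge M$ for all large $i$. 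Since $M$ was arbitrary, $H_{\mu}(\alpha_{i})\to+\infty$. The only substantive point is Step 2 — it is the sole place compactness is used, and it is also the conceptual crux: a \emph{single} small atom contributes negligibly to entropy, but uniform non-atomicity forces the fixed atomless mass $c$ to be split into many pieces of mass $<\varepsilon$, and the inequality $\rho(A)\log(1/\rho(A))\ge\rho(A)\log(1/\varepsilon)$ converts this splitting into a lower bound of order $\log(1/\varepsilon)$.
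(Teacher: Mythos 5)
Your proof is correct and follows essentially the same route as the paper's: split off the at most countable set of atoms, use concavity/conditioning to reduce to the normalized atomless part, and then show that the maximal mass of a partition atom tends to zero so that the entropy is bounded below by $\log(1/\varepsilon)$. The only difference is that you spell out the compactness argument (your Step 2) that the paper asserts without proof when it claims $\max_{A\in\alpha_i}\mu|_{X\setminus E}(A)\to 0$.
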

\begin{proof}
Put $E=\{x:\mu(\{x\})>0\}$. Since $\mu$ is not an atomic measure,  $\mu(E)<1$. By the concavity of  $-t\log t$ on $(0,+\infty)$ (see \cite[Lemma~3.2(i)]{LW77}),
\[
H_\mu(\alpha_i)\ge(1-\mu(E))\,H_{\mu|_{X\setminus E}}(\alpha_i).
\]
Since $\max_{A\in\alpha_i}\mu|_{X\setminus E}(A)\to 0$ as $i\rightarrow +\infty$, we have
$$H_{\mu|_{X\setminus E}}(\alpha_i)\ge-\log(\max_{A\in\alpha_i}\mu|_{X\setminus E}(A))\to +\infty$$ as $i\rightarrow +\infty$, and the lemma follows.
\end{proof}
%

We recall that an almost everywhere finite-to-one factor map between two measure preserving systems were introduced earlier in Definition~\ref{de-fi-one}.
\begin{lem}\label{lem-2}Let $(X,\mathcal{X},\mu,T)$ be a system with finite maximal pattern entropy. Let $\pi: (X,\mathcal{X},\mu,T)\to (Y,\mathcal{Y},\nu,R)$ be a factor map with $\pi^{-1}(\mathcal{Y})=\mathcal{K}(X,\mathcal{X},\mu,T)$  modulo $\mu$-null sets. If $\mu=\int_\Omega \mu_\omega d\xi(\omega)$ is the ergodic decomposition of $\mu$, then for $\xi$-a.e. $\omega\in\Omega$, the factor map $\pi:( X,\mathcal{X},\mu_\omega,T)\to (Y,\mathcal{Y},\pi_*(\mu_\omega),R)$ is almost everywhere finite-to-one.
\end{lem}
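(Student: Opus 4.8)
The plan is to reduce the statement, by means of disintegration over $\pi$, to two facts about the conditional measures of $\mu$: first that they are atomic $\nu$-a.e.\ (this uses only the finite-maximal-pattern-entropy hypothesis together with Theorem~\ref{thm-se-enytopy} and Lemma~\ref{lem-1}), and second that on each ergodic component the ergodicity promotes ``atomic fibre'' to ``finitely many equidistributed atoms''.

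First I would fix a refining sequence $\{\alpha_i\}_{i\ge1}$ of finite measurable partitions of $X$ with $\operatorname{diam}(\alpha_i)\to0$ and $\bigvee_i\alpha_i=\mathcal{X}$ mod $\mu$, and write $\mu=\int_Y\mu_y\,d\nu(y)$ for the disintegration of $\mu$ over $\pi$. Since $\pi^{-1}(\mathcal{Y})=\mathcal{K}(X,\mathcal{X},\mu,T)$ modulo $\mu$-null sets, one has $\mathbb{E}_\mu(1_A\mid\pi^{-1}(\mathcal{Y}))(x)=\mu_{\pi(x)}(A)$, so Theorem~\ref{thm-se-enytopy} yields
$$\int_Y H_{\mu_y}(\alpha_i)\,d\nu(y)=H_\mu(\alpha_i\mid\mathcal{K}(X,\mathcal{X},\mu,T))=h^*_\mu(T,\alpha_i)\le h^*_\mu(T)<\infty$$
for every $i$, the last inequality being the hypothesis. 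As $H_{\mu_y}(\alpha_i)$ increases with $i$, monotone convergence gives $\int_Y\sup_iH_{\mu_y}(\alpha_i)\,d\nu(y)<\infty$, hence $\sup_iH_{\mu_y}(\alpha_i)<\infty$ for $\nu$-a.e.\ $y$; by Lemma~\ref{lem-1} the measure $\mu_y$ is atomic for $\nu$-a.e.\ $y\in Y$.

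Next I would pass to ergodic components. Because $\mathcal{I}_\mu(T)\subseteq\mathcal{K}(X,\mathcal{X},\mu,T)=\pi^{-1}(\mathcal{Y})$, the disintegration of $\mu$ over $\pi$ refines the ergodic decomposition $\mu=\int_\Omega\mu_\omega\,d\xi(\omega)$, so by transitivity of disintegration (Subsection~\ref{subs-Disin}) the disintegration of $\mu_\omega$ over $\pi$ has, for $\xi$-a.e.\ $\omega$, conditional measures equal to $\mu_y$ for $\pi_*(\mu_\omega)$-a.e.\ $y$; in particular these are atomic $\pi_*(\mu_\omega)$-a.e. Fix such an $\omega$, so $(X,\mathcal{X},\mu_\omega,T)$ is ergodic with atomic fibre measures over $\pi$.

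Finally comes the crux: an ergodic system with an atomic-fibred factor map is automatically a finite, uniform extension. Put $g(x)=\mu_{\pi(x)}(\{x\})$; this is measurable (it equals $\inf_k\int\max(0,1-k\,d(x,z))\,d\mu_{\pi(x)}(z)$), and $g\circ T=g$ since $T_*\mu_y=\mu_{Ry}$ (valid as $\pi^{-1}(\mathcal{Y})$ is $T$-invariant). Hence each $A_{>c}:=\{g>c\}$ is $T$-invariant, so $\mu_\omega(A_{>c})\in\{0,1\}$; using $\mu_\omega(B)=\int_Y\mu_y(B)\,d\pi_*(\mu_\omega)(y)$ one has $\mu_\omega(A_{>0})=1$ (atomicity) and $\mu_\omega(A_{>1})=0$, so there is $c^*\in(0,1]$ with $\mu_\omega(A_{>c})=1$ for $c<c^*$ and $\mu_\omega(A_{>c})=0$ for $c>c^*$. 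Intersecting over rationals $c\uparrow c^*$ shows that for $\pi_*(\mu_\omega)$-a.e.\ $y$ every atom of $\mu_y$ has mass $\ge c^*$, and intersecting over rationals $c\downarrow c^*$ shows every atom has mass $\le c^*$; therefore every atom of $\mu_y$ has mass exactly $c^*$, so $\mu_y$ is supported on exactly $N:=1/c^*\in\mathbb{N}$ points. Thus $\pi\colon(X,\mathcal{X},\mu_\omega,T)\to(Y,\mathcal{Y},\pi_*(\mu_\omega),R)$ is almost everywhere finite-to-one for $\xi$-a.e.\ $\omega$, as claimed. I expect this last step to require the most care, since it is precisely here that ergodicity of the component (not of $(X,\mathcal{X},\mu,T)$ itself) is indispensable; the earlier steps are routine manipulations of disintegrations together with the cited entropy identity.
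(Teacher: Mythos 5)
Your proof is correct and follows the same overall skeleton as the paper's --- finite maximal pattern entropy plus Theorem~\ref{thm-se-enytopy} and Lemma~\ref{lem-1} force the fibres over the Kronecker factor to be atomic, and ergodicity of the component then upgrades atomic fibres to finite fibres --- but it differs in two local respects. First, where you disintegrate $\mu$ over $\pi$ once and push the atomicity down to the ergodic components via the tower property of disintegrations (using $\mathcal{I}_\mu(T)\subseteq\mathcal{K}(X,\mathcal{X},\mu,T)=\pi^{-1}(\mathcal{Y})$), the paper instead applies Jensen's inequality to the concave functional $\rho\mapsto H_\rho(\alpha_i\mid\pi^{-1}(\mathcal{Y}))$ to bound $\int_\Omega H_{\mu_\omega}(\alpha_i\mid\pi^{-1}(\mathcal{Y}))\,d\xi(\omega)$ and then disintegrates each $\mu_\omega$ separately; the two routes are equivalent, and yours avoids the concavity input at the cost of invoking the (standard) compatibility of nested disintegrations. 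Second, and more substantively, for the final step the paper simply cites the Rohlin skew-product dichotomy (an ergodic extension is either a.e.\ finite-to-one or has a.e.\ atomless fibres), whereas you reprove the needed half from scratch via the $T$-invariant function $g(x)=\mu_{\pi(x)}(\{x\})$ and the ergodic zero--one law applied to the level sets $\{g>c\}$; this argument is sound (the key point, that $\mu_\omega(\{g>0\})=1$ together with $\mu_\omega(\{g>c\})\in\{0,1\}$ forces a positive uniform lower bound $c^*$ on the atom masses and hence at most $1/c^*$ atoms per fibre, is exactly right), and it makes the lemma self-contained where the paper relies on a black box. No gaps.
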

\begin{proof}For \(\xi\)-a.e.\ \(\omega\in\Omega\) we write \(\nu_\omega=\pi_*(\mu_\omega)\). There exists a Borel set \(\Omega'\subset\Omega\) with \(\xi(\Omega')=1\) such that for every \(\omega\in\Omega'\),
\[
\pi\colon(X,\mathcal{X},\mu_\omega,T)\to(Y,\mathcal{Y},\nu_\omega,R)
\]
is a factor map between two ergodic systems. Let
\(
\mu_\omega=\int_Y\mu_{\omega,y}\,d\nu_\omega(y)
\)
be the disintegration of \(\mu_\omega\) with respect to \((Y,\mathcal{Y},\nu_\omega,R)\). Fix a sequence \(\{\alpha_i\}_{i\in\mathbb{N}}\) of measurable partitions of \(X\) with \(\operatorname{diam}(\alpha_i)\to 0\) and \(\alpha_1\preceq\alpha_2\preceq\alpha_3\preceq\cdots\). For brevity write \(\mathcal{K}(\mu)=\mathcal{K}(X,\mathcal{X},\mu,T)\). Theorem~\ref{thm-se-enytopy} gives
\begin{align}\label{eq-1234-0}
h_\mu^*(T)\ge\lim_{i\to\infty}h_\mu^*(T,\alpha_i)=\lim_{i\to\infty}H_\mu(\alpha_i|\mathcal{K}(\mu))=\lim_{i\to\infty}H_\mu(\alpha_i|\pi^{-1}(\mathcal{Y})).
\end{align}
By Jensen's inequality and the concavity of \(H_{\cdot}(\alpha_i|\pi^{-1}(\mathcal{Y}))\) on \(\mathcal{M}(X)\) (see \cite[Lemma~3.3(1)]{HYZ06}),
\begin{equation}\label{Jesen-eq-1}
H_\mu(\alpha_i|\pi^{-1}(\mathcal{Y}))\ge\int_\Omega H_{\mu_\omega}(\alpha_i|\pi^{-1}(\mathcal{Y}))\,d\xi(\omega).
\end{equation}
Since $\alpha_1\preceq\alpha_2\preceq\cdots$, for $\xi$-a.e.\ $\omega\in\Omega$ the sequence $\{H_{\mu_\omega}(\alpha_i|\pi^{-1}(\mathcal{Y}))\}_{i\in\mathbb{N}}$ is non-negative and non-decreasing in $i$. From \eqref{eq-1234-0} and \eqref{Jesen-eq-1} we obtain
\begin{align}\label{eq-1234}
h_\mu^*(T)\ge\int_\Omega\lim_{i\to\infty}H_{\mu_\omega}(\alpha_i|\pi^{-1}(\mathcal{Y}))\,d\xi(\omega).
\end{align}
As $h_\mu^*(T)<\infty$, we have $\lim_{i\to\infty}H_{\mu_\omega}(\alpha_i|\pi^{-1}(\mathcal{Y}))<\infty$ for $\xi$-a.e.\ $\omega\in\Omega$.

Set $\Omega_c=\{\omega\in\Omega':\lim_{i\to\infty}H_{\mu_\omega}(\alpha_i|\pi^{-1}(\mathcal{Y}))<\infty\}$. Then $\Omega_c$ is measurable and $\xi(\Omega_c)=1$. We shall show that for every $\omega\in\Omega_c$ the factor map
\[
\pi\colon(X,\mathcal{X},\mu_\omega,T)\to(Y,\mathcal{Y},\nu_\omega,R)
\]
is  almost everywhere finite-to-one.

Fix $\omega\in\Omega_c$. Since
\[
H_{\mu_\omega}(\alpha_i|\pi^{-1}(\mathcal{Y}))=\int_Y H_{\mu_{\omega,y}}(\alpha_i)\,d\nu_\omega(y)\text{ for each }i\in\mathbb{N},
\]
and the sequence $\{H_{\mu_{\omega,y}}(\alpha_i)\}_{i\in\mathbb{N}}$ is non-negative and non-decreasing in $i$ for $\nu_\omega$-a.e.\ $y\in Y$, we get
\[
\infty>\lim_{i\to\infty}H_{\mu_\omega}(\alpha_i|\pi^{-1}(\mathcal{Y}))\ge\int_Y\lim_{i\to\infty}H_{\mu_{\omega,y}}(\alpha_i)\,d\nu_\omega(y).
\]
Hence $\lim_{i\to\infty}H_{\mu_{\omega,y}}(\alpha_i)<\infty$ for $\nu_\omega$-a.e.\ $y\in Y$, so Lemma~\ref{lem-1} implies that $\mu_{\omega,y}$ is atomic for $\nu_\omega$-a.e.\ $y\in Y$.

By the Rohlin skew-product theorem (see e.g.\ \cite[Theorem~3.18]{G03}), a factor map between two ergodic systems is either almost everywhere finite-to-one or, for almost every fibre, the conditional measure is atomless. Since $\mu_{\omega,y}$ is atomic for $\nu_\omega$-a.e.\ $y\in Y$, the map
\(
\pi\colon(X,\mathcal{X},\mu_\omega,T)\to(Y,\mathcal{Y},\nu_\omega,R)
\)
is almost everywhere finite-to-one, completing the proof of Lemma~\ref{lem-2}.
\end{proof}

For  $C\subset \mathbb{T}$,  let
	$$C^\mathbb{Q}:=\left\{e(\sum_{i=1}^k q_it_{i}+q): k\in\mathbb{N}, q\in\mathbb{Q} \text{ and } q_i\in \mathbb{Q}, e(t_i)\in C \text{ for }i=1,\cdots,k\right\}.$$
	It is easy to see that if $C$ is a countable subset of $\mathbb{T}$, then so is $C^\mathbb{Q}$.
	\begin{lem}\label{lem-23}Let $\pi: (X,\mathcal{X},\mu,T)\to (Y,\mathcal{Y},\nu,R)$ be a factor map between two ergodic systems. If $\pi$ is almost everywhere finite-to-one, then $$\text{Spec}_{irr}(X,\mathcal{X},\mu,T)\subset \text{Spec}_{irr}(Y,\mathcal{Y},\nu,R)^\mathbb{Q}.$$
	\end{lem}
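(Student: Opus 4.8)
The plan is to reduce an irrational eigenvalue of the extension to one of the base by forming the product of an eigenfunction along the (finite) fibres of $\pi$. Let $\mu=\int_Y\mu_y\,d\nu(y)$ be the disintegration of $\mu$ over $\pi$, and set $S_y:=\operatorname{supp}\mu_y$, which is finite for $\nu$-a.e.\ $y$ by hypothesis. Equivariance of the disintegration, $\mu_{Ry}=T_*\mu_y$ for $\nu$-a.e.\ $y$, gives $S_{Ry}=T(S_y)$; since $T$ is a homeomorphism, $\#S_{Ry}=\#S_y$, so $y\mapsto\#S_y$ is $R$-invariant and, by ergodicity of $(Y,\mathcal{Y},\nu,R)$, equals a fixed $k\in\mathbb{N}$ for $\nu$-a.e.\ $y$. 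A measurable selection theorem then produces measurable maps $x_1,\dots,x_k\colon Y\to X$ with $S_y=\{x_1(y),\dots,x_k(y)\}$ (distinct points) for $\nu$-a.e.\ $y$.

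Next, given $\lambda\in\operatorname{Spec}_{\mathrm{irr}}(X,\mathcal{X},\mu,T)$, I would fix an eigenfunction $f\in L^2(X,\mathcal{X},\mu)$, $f\neq 0$, with $U_Tf=\lambda f$. As $|\lambda|=1$, the function $|f|$ is $T$-invariant, hence a positive constant $\mu$-a.e.; after normalising we may assume $|f|=1$ $\mu$-a.e. Since $f$ is defined $\mu_y$-a.e.\ for $\nu$-a.e.\ $y$ and $\mu_y$ charges every point of $S_y$, $f$ is defined on all of $S_y$, and I set $P(y):=\prod_{j=1}^{k}f(x_j(y))$. This is a well-defined measurable function on $Y$ — the product is independent of the labelling of the fibre — with $|P|=1$ $\nu$-a.e., so $P\in L^\infty(Y,\mathcal{Y},\nu)$ and $P\neq 0$. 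Because $\pi\circ T=R\circ\pi$ and $S_{Ry}=T(S_y)$, the sets $\{x_1(Ry),\dots,x_k(Ry)\}$ and $\{Tx_1(y),\dots,Tx_k(y)\}$ coincide, hence
\[
P(Ry)=\prod_{j=1}^{k}f(x_j(Ry))=\prod_{j=1}^{k}f(Tx_j(y))=\prod_{j=1}^{k}\lambda\,f(x_j(y))=\lambda^{k}P(y).
\]
Therefore $\lambda^{k}\in\operatorname{Spec}(Y,\mathcal{Y},\nu,R)$.

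Finally I would extract the stated algebraic conclusion. Write $\lambda=e(\alpha)$ with $\alpha\in\mathbb{R}\setminus\mathbb{Q}$; then $\lambda^{k}=e(k\alpha)$ and $k\alpha\notin\mathbb{Q}$ (else $\alpha=(k\alpha)/k\in\mathbb{Q}$), so by \eqref{eq:rational-eQ} we get $\lambda^{k}\in\operatorname{Spec}_{\mathrm{irr}}(Y,\mathcal{Y},\nu,R)$, say $\lambda^{k}=e(\beta)$ with $\beta\in\mathbb{R}\setminus\mathbb{Q}$ and $e(\beta)\in\operatorname{Spec}_{\mathrm{irr}}(Y,\mathcal{Y},\nu,R)$. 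From $e(k\alpha)=e(\beta)$ we get $k\alpha-\beta\in\mathbb{Z}$, i.e.\ $\alpha=\tfrac1k\beta+\tfrac mk$ for some $m\in\mathbb{Z}$; taking $q_1=\tfrac1k\in\mathbb{Q}$, $t_1=\beta$ and $q=\tfrac mk\in\mathbb{Q}$ in the definition of $C^{\mathbb{Q}}$ gives $\lambda=e(q_1t_1+q)\in\operatorname{Spec}_{\mathrm{irr}}(Y,\mathcal{Y},\nu,R)^{\mathbb{Q}}$, which is the claim.

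The only points requiring care are the a.e.-constancy of the fibre cardinality and the measurable selection of fibre points; both are routine once one works with the disintegration over $\pi$ and uses ergodicity, and the selection step can be bypassed entirely by invoking the Rohlin representation of a finite extension as $X\cong Y\times\{1,\dots,k\}$ with $R$ acting by $(y,i)\mapsto(Ry,\sigma_y(i))$ for a measurable cocycle $\sigma\colon Y\to\operatorname{Sym}(k)$, in which case $P(y)=\prod_{i=1}^{k}f(y,i)$ and permutation-invariance of the product makes $P(Ry)=\lambda^{k}P(y)$ immediate. I therefore do not anticipate a genuine obstacle; the whole content is the fibre-product trick together with this bookkeeping.
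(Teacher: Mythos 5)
Your proof is correct, but it follows a genuinely different route from the paper's. The paper argues by contradiction through a joining: given an irrational eigenvalue $\lambda$ with unimodular eigenfunction $f$, it forms the graph joining of the rotation $(\mathbb{T},m_{\mathbb{T}},R_{\lambda})$ with $(Y,\mathcal{Y},\nu,R)$ via $x\mapsto(f(x),\pi(x))$; if $\lambda\notin\operatorname{Spec}_{\mathrm{irr}}(Y,\mathcal{Y},\nu,R)^{\mathbb{Q}}$, the two systems have no common eigenvalue other than $1$, hence are disjoint by Lemma~\ref{lemma-eig-dis}(ii), so the joining is $m_{\mathbb{T}}\times\nu$ and the projection to $Y$ has atomless Haar fibres, contradicting the almost everywhere finite-to-one hypothesis. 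You instead use the classical fibre-product trick: the fibre cardinality is a.e.\ a constant $k$ by ergodicity, and the product $P(y)=\prod_{j=1}^{k}f(x_j(y))$ over a measurable enumeration of the fibre is a nonzero eigenfunction of $(Y,\mathcal{Y},\nu,R)$ with eigenvalue $\lambda^{k}$, which is still irrational, whence $\lambda\in\operatorname{Spec}_{\mathrm{irr}}(Y,\mathcal{Y},\nu,R)^{\mathbb{Q}}$ with explicit coefficient $1/k$. Your argument is more elementary and self-contained (it bypasses the Frantzikinakis--Host disjointness lemma entirely) and yields the slightly sharper conclusion that $\lambda^{k}$ itself lies in $\operatorname{Spec}_{\mathrm{irr}}(Y,\mathcal{Y},\nu,R)$; the price is the measurable-selection and fibre-equivariance bookkeeping, which the paper's joining argument avoids. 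The only points to spell out fully are the ones you already flag: fix a Borel representative of $f$ and a $T$-invariant conull set on which $f\circ T=\lambda f$ and $|f|=1$ hold pointwise, and note that for $\nu$-a.e.\ $y$ the finite support $S_y$ lies inside that set because each of its points carries positive $\mu_y$-mass; with that, the identity $P(Ry)=\lambda^{k}P(y)$ is exactly as you state.
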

\begin{proof} Let  $\lambda\in\text{Spec}_{irr}(X,\mathcal{X},\mu,T)$. Assume $f\in L^2(\mu)$ satisfies $U_Tf=\lambda f$. Due to ergodicity, we can further assume that $|f(x)|=1$ for $\mu$-a.e. $x\in X$. Define a factor map
 $$\widetilde{\pi}: (X,\mathcal{X},\mu,T)\rightarrow (\mathbb{T}\times Y,\mathcal{B}_\mathbb{T}\times\mathcal{Y}, \widetilde{\pi}_*(\mu), R_\lambda\times R),$$
  such that $\widetilde{\pi}(x)=(f(x),\pi(x))$ for $x\in X$,
    where $R_\lambda: \mathbb{T}\rightarrow \mathbb{T}$ is the rotation: $R(z)=\lambda z$ for $z\in\mathbb{T}$ and $\mathcal{B}_{\mathbb{T}}$ is the Borel-$\sigma$ algebra of $\mathbb{T}$. Let $m_\mathbb{T}$ be the Haar measure on $\mathbb{T}$. Since $\lambda $ is an  irrational eigenvalue with respect to the eigenfunction $f$, it follows that $\widetilde{\pi}_*(\mu)$ is a joining of $(\mathbb{T},\mathcal{B}_{\mathbb{T}},m_\mathbb{T},R_{\lambda})$ and $(Y,\mathcal{Y},\nu,R)$.

    Note that $\text{Spec}(\mathbb{T},\mathcal{B}_\mathbb{T}, m_\mathbb{T},R_\lambda)=\{\lambda^n: n\in \mathbb{Z}\}$. If $\lambda\notin \text{Spec}_{irr}(Y,\mathcal{Y},\nu,R)^\mathbb{Q}$, the rotation $(\mathbb{T},\mathcal{B}_\mathbb{T}, m_\mathbb{T},R_\lambda)$ (which is an ergodic $1$-step nilsystem) and the ergodic system $(Y,\mathcal{Y},\nu,R)$ have disjoint spectrum different than $1$.  Due to Lemma \ref{lemma-eig-dis} (ii), they are disjoint, which implies that $ \widetilde{\pi}_*(\mu)=m_\mathbb{T}\times\nu$. Hence, the factor map $\pi_Y:(\mathbb{T}\times Y, \mathcal{B}_\mathbb{T}\times\mathcal{Y}, \widetilde{\pi}_*(\mu), R_\lambda\times R)\rightarrow (Y,\mathcal{Y},\nu,R)$ is not almost everywhere finite-to-one, where $\pi_Y: \mathbb{T}\times Y\rightarrow Y$ is the coordinate projection. However, by hypothesis $\pi=\pi_Y\circ\widetilde{\pi}$ is almost everywhere finite-to-one, it follows that $\pi_Y$ is almost everywhere finite to one, a contradiction. Thus, $\lambda\in \text{Spec}_{irr}(Y,\mathcal{Y},\nu,R)^\mathbb{Q}$, which proves Lemma \ref{lem-23} by the arbitrariness of irrational eigenvalue $\lambda$.
	\end{proof}

\begin{prop}\label{lem-mb-mpe-zc}
Let $(X,T)$ be a TDS. If every invariant probability measure of $(X,T)$ has finite maximal pattern entropy, then $(X,T)$ has zero entropy and almost countable spectrum.
\end{prop}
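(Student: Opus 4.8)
The plan is to prove the two assertions separately. For zero entropy: since every $\mu\in\mathcal{M}(X,T)$ has finite maximal pattern entropy, the contrapositive of Theorem~\ref{MPE-p}~(4) forces $h_\mu(T)=0$ for every such $\mu$, and then the variational principle gives $h_{\mathrm{top}}(X,T)=0$. No obstacle here.

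For the spectral assertion I would use the reformulation of ``almost countable spectrum'' recorded just before Proposition~\ref{prop-dis-1}: it suffices to produce, for each $\mu\in\mathcal{M}(X,T)$, a countable set $C_\mu\subset\mathbb{T}\setminus e(\mathbb{Q})$ such that, writing $\mu=\int_\Omega\mu_\omega\,d\tau(\omega)$ for the ergodic decomposition, one has $\mathrm{Spec}_{\mathrm{irr}}(X,\mathcal{X},\mu_\omega,T)\subset C_\mu$ for $\tau$-a.e.\ $\omega$. Fix $\mu$ and take the factor map $\pi\colon(X,\mathcal{X},\mu,T)\to(Y,\mathcal{Y},\nu,R)$ corresponding to the $T$-invariant sub-$\sigma$-algebra $\mathcal{K}(X,\mathcal{X},\mu,T)$, so that $\pi^{-1}(\mathcal{Y})=\mathcal{K}(X,\mathcal{X},\mu,T)$ mod $\mu$ and $(Y,\mathcal{Y},\nu,R)$ has discrete spectrum. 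Pushing the ergodic decomposition of $\mu$ through $\pi$ yields $\nu=\int_\Omega\pi_*(\mu_\omega)\,d\tau(\omega)$ with each $\pi_*(\mu_\omega)$ ergodic, which by uniqueness of the ergodic decomposition is the ergodic decomposition of $\nu$.

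Now I would combine three earlier lemmas along the same index $\omega$, intersecting the relevant full-measure sets. Since $\mu$ has finite maximal pattern entropy, Lemma~\ref{lem-2} gives, for $\tau$-a.e.\ $\omega$, that $\pi\colon(X,\mathcal{X},\mu_\omega,T)\to(Y,\mathcal{Y},\pi_*(\mu_\omega),R)$ is almost everywhere finite-to-one between ergodic systems; hence Lemma~\ref{lem-23} yields $\mathrm{Spec}_{\mathrm{irr}}(X,\mathcal{X},\mu_\omega,T)\subset\mathrm{Spec}_{\mathrm{irr}}(Y,\mathcal{Y},\pi_*(\mu_\omega),R)^{\mathbb{Q}}$. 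Applying Lemma~\ref{lem-sp-rho=rhopmega} to the discrete-spectrum system $(Y,\mathcal{Y},\nu,R)$ along the decomposition $\nu=\int_\Omega\pi_*(\mu_\omega)\,d\tau(\omega)$ gives $\mathrm{Spec}(Y,\mathcal{Y},\pi_*(\mu_\omega),R)\subset\mathrm{Spec}(Y,\mathcal{Y},\nu,R)$ for $\tau$-a.e.\ $\omega$, hence the same containment for the irrational parts. As $C\mapsto C^{\mathbb{Q}}$ is clearly monotone, these combine to $\mathrm{Spec}_{\mathrm{irr}}(X,\mathcal{X},\mu_\omega,T)\subset\bigl(\mathrm{Spec}_{\mathrm{irr}}(Y,\mathcal{Y},\nu,R)\bigr)^{\mathbb{Q}}$ for $\tau$-a.e.\ $\omega$. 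Since $\mathrm{Spec}_{\mathrm{irr}}(Y,\mathcal{Y},\nu,R)$ is countable, so is its $\mathbb{Q}$-closure, and setting $C_\mu:=\bigl(\mathrm{Spec}_{\mathrm{irr}}(Y,\mathcal{Y},\nu,R)\bigr)^{\mathbb{Q}}\setminus e(\mathbb{Q})$ gives a countable subset of $\mathbb{T}\setminus e(\mathbb{Q})$; using \eqref{eq:rational-eQ}, namely $\mathrm{Spec}_{\mathrm{irr}}(X,\mathcal{X},\mu_\omega,T)\subset\mathbb{T}\setminus e(\mathbb{Q})$, we conclude $\mathrm{Spec}_{\mathrm{irr}}(X,\mathcal{X},\mu_\omega,T)\subset C_\mu$ for $\tau$-a.e.\ $\omega$, so $(X,T)$ has almost countable spectrum.

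The main obstacle I anticipate is purely bookkeeping rather than conceptual: one must check carefully that the ergodic decomposition of $\nu$ really is the $\pi$-pushforward of that of $\mu$, so that Lemmas~\ref{lem-2}, \ref{lem-23} and \ref{lem-sp-rho=rhopmega} all refer to the same parameter $\omega$ (and their respective conull sets may be intersected), and that for $\tau$-a.e.\ $\omega$ the map $\pi\colon(X,\mathcal{X},\mu_\omega,T)\to(Y,\mathcal{Y},\pi_*(\mu_\omega),R)$ is indeed a factor map between two ergodic systems so that Lemma~\ref{lem-23} is applicable. These are routine but should be spelled out.
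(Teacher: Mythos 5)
Your proposal is correct and follows essentially the same route as the paper: zero entropy via Theorem~\ref{MPE-p}~(4) and the variational principle, then the Kronecker factor map $\pi$ combined with Lemma~\ref{lem-2} (almost everywhere finite-to-one on ergodic components), Lemma~\ref{lem-23} (irrational spectrum absorbed into $\mathrm{Spec}_{\mathrm{irr}}(Y,\mathcal{Y},\pi_*(\mu_\omega),R)^{\mathbb{Q}}$), and Lemma~\ref{lem-sp-rho=rhopmega} (spectrum of ergodic components of the discrete-spectrum factor contained in $\mathrm{Spec}(Y,\mathcal{Y},\nu,R)$). The bookkeeping points you flag (that the pushed-forward decomposition is the ergodic decomposition of $\nu$ and that the conull sets can be intersected) are exactly what the paper's proof handles, so nothing is missing.
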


\begin{proof}
Let $\mu\in\mathcal{M}(X,T)$. By assumption, the maximal pattern entropy satisfies $h_\mu^*(T)<+\infty$; hence $h_\mu(T)=0$ by Theorem~\ref{MPE-p}~(4). It is well known (see, for example, \cite[Theorem~5.15]{F81} or \cite[Theorem~6.5]{EW11}) that there exists a factor map $\pi\colon (X,\mathcal{X},\mu,T)\to (Y,\mathcal{Y},\nu,R)$ such that $\pi^{-1}(\mathcal{Y})=\mathcal{K}(X,\mathcal{X},\mu,T)\mod\mu$. Thus $(Y,\mathcal{Y},\nu,R)$ has discrete spectrum.

Let $\mu=\int_{\Omega}\mu_\omega\,d\xi(\omega)$ be the ergodic decomposition of $\mu$. Then
\[
\nu=\pi_*(\mu)=\int_{\Omega}\pi_*(\mu_\omega)\,d\xi(\omega)
\]
is the ergodic decomposition of $\nu$. By Lemma~\ref{lem-sp-rho=rhopmega},
\begin{equation}\label{spect-relation-dec-1}
\text{Spec}_{irr}(Y,\mathcal{Y},\pi_*(\mu_\omega),T)\subset \text{Spec}_{irr}(Y,\mathcal{Y},\nu,T)
\end{equation}
for $\xi$-a.e.\ $\omega\in \Omega$.

By Lemma~\ref{lem-2}, the factor map $\pi\colon (X,\mathcal{X},\mu_\omega,T)\to (Y,\mathcal{Y},\pi_*(\mu_\omega),R)$ is almost everywhere finite-to-one for $\xi$-a.e.\ $\omega\in\Omega$. Lemma~\ref{lem-23} and \eqref{spect-relation-dec-1} then give, for $\xi$-a.e.\ $\omega\in\Omega$,
\[
\text{Spec}_{irr}(X,\mathcal{X},\mu_\omega,T)\subset\text{Spec}_{irr}(Y,\mathcal{Y},\pi_*(\mu_\omega),T)^{\mathbb{Q}}\subset \text{Spec}_{irr}(Y,\mathcal{Y},\nu,T)^{\mathbb{Q}}.
\]
Since $\text{Spec}_{irr}(Y,\mathcal{Y},\nu,T)$ is countable, so is $\text{Spec}_{irr}(Y,\mathcal{Y},\nu,T)^{\mathbb{Q}}$; hence $(X,\mathcal{X},\mu,T)$ has almost countable spectrum. As $\mu\in\mathcal{M}(X,T)$ was arbitrary, $(X,T)$ has almost countable spectrum and zero entropy (by the variational principle). This completes the proof of Proposition~\ref{lem-mb-mpe-zc}.
\end{proof}

\noindent{\bf Acknowledgment.}
This work is supported by the National Key R\&D Program of China (Nos. 2024YFA1013602, 2024YFA1013600) and the National Natural Science Foundation of China (Nos.12031019,12371197,12426201). The authors thank Professor Song Shao for bringing Theorem 1.1 of Rudolph's article \cite{Ru} to their attention.

	\end{document}